\newtheorem{theorem}{Theorem}[section]
\newtheorem{corollary}[theorem]{Corollary}
\newtheorem{lemma}[theorem]{Lemma}
\newtheorem{conjecture}{Conjecture}[section]
\def\cQ{\mathcal{Q}}
\def\cQg{\mathcal{Q}_g}
\def\Qg{Q_g}
\def\cR{\mathcal{R}}
\def\cL{\mathcal{L}}
\def\cRg{\mathcal{R}_g}
\def\Rg{R_g}
\def\cS{\mathcal{S}}
\def\cH{\mathcal{H}}
\def\cSg{\mathcal{S}_g}
\def\Sg{S_g}
\def\Sgei{S_g^{\mathrm{ew}=2i}}
\def\Sgci{S_g^{C=2i}}
\def\cSgei{\cS_g^{\mathrm{ew}=2i}}
\def\cSgci{\cS_g^{C=2i}}
\def\cE{\mathcal{E}}
\def\cG{\mathcal{G}}
\def\qedclaim{\hfill$\triangle$\smallskip}
\def\cLhc{\cL^{\mathrm{con}}}
\def\cLhs{\cL^{\mathrm{spl}}}
\def\bX{\overline{X}}
\def\bL{\overline{L}}
\def\bh{\overline{h}}
\def\cC{\mathcal{C}}
\def\cB{\mathcal{B}}
\def\cT{\mathcal{T}}
\def\fw{\mathrm{fw}}
\def\tcSof{\cS^{\Diamond}}
\def\tcSot{\cS^{\rVert}}
\def\tSof{S^{\Diamond}}
\def\tSot{S^{\rVert}}
\def\tcRof{\cR^{\Diamond}}
\def\tRof{R^{\Diamond}}
\def\tcQof{\cQ^{\Diamond}}
\def\tQof{Q^{\Diamond}}
\def\tA{\widetilde{A}}
\def\tPg{\widetilde{P}_g}
\def\tg{\tilde{g}}
\def\th{\widetilde{h}}
\def\cF{\mathcal{F}}
\def\Qgr{\overrightarrow{Q}_g}
\def\Mgr{\overrightarrow{M}_g}
\def\tDelta{\widetilde{\Delta}}
\def\fw{\mathrm{fw}}
\def\ew{\mathrm{ew}}
\newcommand{\cacher}[1]{}
\def\cZ{\mathcal{Z}}
\title{Asymptotic enumeration and limit laws for graphs of fixed genus}
\author{Guillaume Chapuy\thanks{Supported by a CNRS/PIMS postdoctoral fellowship. 
   Partial support from the European Grant ERC StG 208471 -- ExploreMaps}\\
\small   {Department of Mathematics}\\
\small   {Simon Fraser University}\\
\small   {Burnaby, B.C. V5A 1S6, Canada} \\
\small    {\tt gchapuy@sfu.ca}  \\
\and \'Eric Fusy\\
\small {Laboratoire d'Informatique}\\
\small {\'Ecole Polytechnique}\\  
\small {91128 Palaiseau Cedex, France}\\
\small    {\tt fusy@lix.polytechnique.fr}  
\and Omer Gim\'{e}nez\\
\small {Dept. de Llenguatges i Sistemes Inform\`atics}\\
\small {Universitat Polit\`ecnica de Catalunya}\\
\small {Jordi Girona, 1-3}\\
\small {08034 Barcelona, SPAIN}\\
\small    {\tt omer.gimenez@gmail.com}  
\and Bojan Mohar\thanks{Supported in part by the Research Grant
   P1--0297 (Slovenia), by an NSERC Discovery Grant (Canada)
   and by the Canada Research Chair program.}~\thanks{On leave from:
   IMFM \& FMF, Department of Mathematics, University of Ljubljana,
   Ljubljana, Slovenia.}\\
\small   {Department of Mathematics}\\
\small   {Simon Fraser University}\\
\small   {Burnaby, B.C. V5A 1S6, Canada} \\
\small    {\tt mohar@sfu.ca}
\and Marc Noy\\
\small {Departament de Matem\`atica Aplicada II}\\
\small {Universitat Polit\`{e}cnica de Catalunya}\\  
\small {Jordi Girona 1-3}\\
\small {08034 Barcelona, Spain} \\
\small    {\tt marc.noy@upc.edu}  
}
\begin{document}

\maketitle

\begin{abstract}
It is shown that the number of labelled graphs with $n$ vertices that
can be embedded in the orientable surface $\mathbb{S}_g$ of genus
$g$ grows asymptotically like
$$
c^{(g)}n^{5(g-1)/2-1}\gamma^n n!
$$
where $c^{(g)} >0$, and $\gamma \approx 27.23$ is the exponential
growth rate of planar graphs. This generalizes the result for the
planar case  $g=0$, obtained by Gim\'{e}nez and Noy.

An analogous result for non-orientable surfaces is obtained. In
addition, it is proved that several parameters of interest behave
asymptotically as in the planar case. It follows, in particular, that
a random graph embeddable in $\mathbb{S}_g$ has a unique
2-connected component of linear size with high probability.
\end{abstract}


\section{Introduction and statement of main results}

It has been shown by Gim\'{e}nez and Noy \cite{gimeneznoy} that the
number of planar graphs with $n$ labelled vertices grows
asymptotically as
$$
    c \cdot n^{-7/2} \gamma^n n!
$$
where $c>0$ and $\gamma \approx 27.23$ are well defined analytic
constants. Since planar graphs are precisely those that can be
embedded in the sphere, it is natural to ask about the number of
graphs that can be embedded in a given surface.

In what follows, graphs are simple and  labelled with $V =
\{1,2,\dots,n\}$, so that isomorphic graphs are considered
different unless they have exactly the same edges. Let
$\mathbb{S}_g$ be the orientable surface of genus $g$, that is, a
sphere with $g$ handles, and let $a_n^{(g)}$ be the number of
graphs with $n$ vertices embeddable in $\mathbb{S}_g$.
A first approximation to the magnitude of these numbers was given
by McDiarmid \cite{colin}, who showed that
$$
   \lim_{n\to\infty} \Bigg({a_n^{(g)} \over n!}\Bigg)^{1/n} =
   \gamma.
   $$
This establishes the \emph{exponential growth} of the $a_n^{(g)}$,
which is the same as for planar graphs and does not depend on the
genus.

In this paper we provide a considerable refinement and obtain a
sharp estimate, showing how the genus comes into play in the
\emph{subexponential growth}. In the next statements, $\gamma$ is
the exponential growth rate of planar graphs, and $\gamma_{\mu}$
is the exponential growth rate of planar graphs with $n$ vertices
and $\lfloor \mu n\rfloor$ edges. Both $\gamma$ and the function
$\gamma_\mu$ are determined analytically in~\cite{gimeneznoy}.


\begin{theorem}\label{theo:main}
For $g\geq 0$, the number $a_n^{(g)}$ of graphs with $n$ vertices
that can be embedded in the orientable surface  $\mathbb{S}_g$ of
genus $g$ satisfies
\begin{equation}\label{eq:asympt-orient}
a_n^{(g)} \sim \,c^{(g)}n^{5(g-1)/2-1}\gamma^n n!
\end{equation}
where $c^{(g)}$ is a positive constant and $\gamma$ is as before.


For $\mu\in(1,3)$, the number $a_{n,m}^{(g)}$ of graphs with $n$
vertices and $m=\lfloor \mu n \rfloor$ edges that can be embedded in
$\mathbb{S}_g$ satisfies
$$
a_{n,m}^{(g)} \sim \, c_{\mu}^{(g)}n^{5g/2-4}(\gamma_{\mu})^n n!\ \
when \ n\to\infty, 
$$
where $c_{\mu}^{(g)}$ is a positive constant and $\gamma_{\mu}$ is
as before.
\end{theorem}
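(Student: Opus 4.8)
The plan is to reduce the enumeration of graphs embeddable in $\mathbb{S}_g$ to the enumeration of \emph{$2$-connected} such graphs, and then to a core structure that can be analyzed via the topological decomposition of embeddable $2$-connected graphs along their $3$-connected components. The first step is standard: a graph embeddable in $\mathbb{S}_g$ has a unique block of maximal size once $n$ is large (this is exactly the high-probability statement advertised in the abstract), and the connected-to-$2$-connected and graph-to-connected passages are governed by the usual exponential formula and the set-of-blocks-on-a-tree composition. In analytic terms, if $C_g(x)$, $B_g(x)$ denote the exponential generating functions of connected, resp. $2$-connected, graphs embeddable in $\mathbb{S}_g$ (marking vertices), then one has relations of the shape $C_g' = \exp(\text{stuff involving } B_g')$ and $A_g = \exp(C_g)$ type identities, suitably corrected for the genus bookkeeping. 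The singularity analysis then transfers: a singularity of type $(1-x/\rho)^{\alpha}$ in $B_g$ at the planar radius $\rho = 1/\gamma$ propagates to $C_g$ and $A_g$ with the same $\rho$ and a shifted exponent, which is where the polynomial factor $n^{5(g-1)/2-1}$ must ultimately come from. So the crux is to show that the $2$-connected embeddable generating function has a singularity at $x=\rho$ of the form $\text{const}\cdot(1-x/\rho)^{5(g-1)/2}$ (up to the standard $\tfrac32$-type corrections), and similarly for the bivariate series.

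For the $2$-connected case the idea is to encode an embeddable $2$-connected graph by its decomposition into $3$-connected components (the classical Trakhtenbrot / Tutte decomposition into series, parallel and $3$-connected "network" pieces). A $2$-connected graph embeds in $\mathbb{S}_g$ essentially iff its $3$-connected components embed in surfaces of total genus $\le g$ in a compatible way; more precisely one uses that a $3$-connected graph has, by Whitney's theorem, essentially a unique embedding in its minimal genus surface, and that the genus is additive over the blocks of the SPQR-like tree. This leads to a finite combinatorial sum over ways of distributing the "excess genus" $g$ among finitely many $3$-connected pieces glued into a network that is \emph{otherwise planar}. The generating function of $3$-connected graphs of genus exactly $h$ (equivalently, of rooted maps on $\mathbb{S}_h$, via Whitney rigidity) is the input here: one needs its dominant singularity to sit at the same $\rho$ as in the planar case and to have a polynomial correction accounting for the $h$ handles — essentially the known behaviour of the generating function of rooted maps of genus $h$, which contributes $(1-x/\rho)^{5h/2}$-type terms (the $\tfrac52$ per handle is the Bender--Canfield--Gao phenomenon and is precisely what yields the exponent $5(g-1)/2$ after the block-tree and connectivity compositions).

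The key technical steps, in order, are: (1) set up the generating functions $A_g, C_g, B_g$ and the three composition schemes (graph $\to$ connected $\to$ $2$-connected $\to$ network/$3$-connected), carefully tracking the genus parameter as an additive quantity; (2) prove the "additivity of genus over the decomposition tree" statement that lets the genus-$g$ series be written as a finite sum of products of planar series and finitely many genus-$h$ $3$-connected series with $\sum h \le g$; (3) import from the theory of maps the singular expansion of the genus-$h$ rooted-map (equivalently genus-$h$ $3$-connected graph) generating function at $\rho$, with its $(1-x/\rho)^{5h/2}$ singularity; (4) run singularity analysis through the compositions, checking that the dominant contribution comes from concentrating all the excess genus on a single $3$-connected piece sitting inside the giant block, which yields exponent $5(g-1)/2$ for $B_g$ and $C_g$ and $5(g-1)/2-1$ for $A_g$, and that the constants $c^{(g)}$ are positive; (5) repeat with the extra edge variable $y$ to get the bivariate statement, where the relevant singularity type for fixed edge density $\mu$ is of square-root or finite type rather than $\tfrac32$, producing the exponent $5g/2-4$ instead. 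I expect step (2) — controlling exactly how the genus of an embeddable $2$-connected graph decomposes along its $3$-connected components, including the subtle cases where a $3$-connected piece is itself planar but the gluing is done on a handle — together with the uniform control of singularity analysis through the nested compositions (so that error terms are genuinely of smaller order after all three transfers), to be the main obstacle; the map-enumeration input in step (3) is available off the shelf and the compositions in steps (1), (4), (5) are then largely bookkeeping in the style of the Gim\'enez--Noy planar analysis.
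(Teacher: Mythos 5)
Your high-level plan (reduce graphs $\to$ connected $\to$ $2$-connected $\to$ $3$-connected, then import map asymptotics) is the right skeleton, and you are right that step (2) is the crux. But there is a genuine gap, and it is precisely what the paper spends most of its effort on: \emph{face-width}. You invoke Whitney's theorem to claim that a $3$-connected graph has an essentially unique embedding in its minimal-genus surface, but Whitney's theorem is a planar statement; for $g>0$ a $3$-connected graph can have many inequivalent embeddings. The correct statement (Robertson--Vitray, Mohar) is that a $3$-connected graph of genus $g$ embeds uniquely \emph{once its face-width is large enough} (at least $2g+3$ in the orientable case). Likewise, your ``genus is additive over the SPQR tree'' claim only becomes the clean statement you need --- a unique $3$-connected component of genus $g$, all others planar --- under the hypothesis that the $2$-connected graph has face-width at least $3$; without it the genus can split awkwardly and the composition scheme does not close. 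So you cannot simply write down an algebraic identity expressing $B_g$ in terms of genus-$h$ $3$-connected series; you must first show that the graphs with small face-width are asymptotically negligible.

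That negligibility is itself the other missing idea. The paper proves it by induction on $g$ via a cutting argument: a short non-contractible cycle can be cut, producing either a graph of genus $g-1$ or two graphs of smaller genera, and the resulting series are bounded by products/derivatives of lower-genus series, which have singular exponent strictly larger (hence smaller coefficients) than the target $\alpha=\tfrac52(1-g)$. This is where the inductive structure of the whole proof lives. Relatedly, because exact generating functions are unavailable for $g>0$ (no Whitney rigidity without face-width control), the paper works throughout with \emph{approximate} singular orders: sandwiching each series coefficientwise between two tractable series with the same leading singular behaviour, and pushing these sandwiches through the critical composition schemes. Your proposal treats the singularity analysis as if an exact singular expansion were available at each level; it is not, and the sandwich machinery is not bookkeeping but a structural necessity. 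In short, the proposal correctly identifies the decomposition chain and the Bender--Canfield map input, but it omits the face-width analysis (which is what licenses the decomposition and the unique-embedding claim), the inductive short-cycle cutting argument (which shows small face-width is rare), and the approximate-singular-order formalism (which replaces exact generating functions). Without these, the argument does not go through.
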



We also prove an analogous result for non-orientable surfaces. Let
$ \mathbb{N}_h$ be the non-orientable surface of genus $h$, that
is, a sphere with $h$ crosscaps.

\begin{theorem}\label{theo:main2}
For $h\geq 1$, the number $b_n^{(h)}$ of graphs with $n$ vertices
that can be embedded in the non-orientable surface $\mathbb{N}_h$
of genus $h$ satisfies
$$
b_n^{(h)} \sim \,\tilde{c}^{(h)}n^{5(h-2)/4-1}\gamma^n n!
$$
where $\tilde{c}^{(h)}$ is a positive constant and $\gamma$ is as before.

For $\mu\in(1,3)$, the number $b_{n,m}^{(h)}$ of graphs with $n$
vertices and $m=\lfloor \mu n \rfloor$ edges that can be embedded in
$\mathbb{N}_h$ satisfies
$$
b_{n,m}^{(h)} \sim \, \tilde{c}_{\mu}^{(h)}n^{5h/4-4}(\gamma_{\mu})^nn!\ \
when \ n\to\infty, 
$$
where $\tilde{c}_{\mu}^{(h)}$ is a positive constant and $\gamma_{\mu}$
is as before.
\end{theorem}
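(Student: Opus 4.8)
\emph{Plan of proof.} The plan is to mimic the proof of Theorem~\ref{theo:main}, replacing everywhere the orientable genus $g$ by the \emph{Euler genus}, and to reduce as much as possible to the machinery of the orientable case. The starting point is the additivity of the Euler genus over the blocks ($2$-connected components) of a graph: a connected graph $G$ embeds in $\mathbb{N}_h$ essentially when the Euler genera of its blocks sum to at most $h$. The only exception concerns the ``orientably simple'' graphs --- those whose non-orientable genus exceeds their Euler genus by exactly $1$ --- and a separate argument is needed to show that these contribute only lower-order terms, so that for the purpose of the leading asymptotics one may replace ``embeddable in $\mathbb{N}_h$'' by ``Euler genus at most $h$''. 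Granting this, and since having Euler genus strictly less than $h$ is asymptotically dominated, a graph embeddable in $\mathbb{N}_h$ is --- up to negligible corrections --- obtained by taking a $2$-connected ``core'' of Euler genus exactly $h$ and inserting planar structures into it along the block decomposition and the network (SPQR) decomposition of the core, exactly as in the orientable case. This produces exact relations between the exponential generating series of graphs, of connected graphs, and of $2$-connected graphs embeddable in $\mathbb{N}_h$, of the same shape as those used for $\mathbb{S}_g$.

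The heart of the argument is then the analysis of the core, namely the $2$-connected graphs embeddable in $\mathbb{N}_h$ of Euler genus exactly $h$. Via the network decomposition this reduces to the $3$-connected case, and the $3$-connected graphs embeddable in $\mathbb{N}_h$ are related to suitable maps on $\mathbb{N}_h$ exactly as, in the treatment of Theorem~\ref{theo:main}, $3$-connected graphs embeddable in $\mathbb{S}_g$ are related to maps on $\mathbb{S}_g$. The crucial new input is the asymptotic enumeration of maps on a non-orientable surface of Euler genus $h$, from the work of Bender, Canfield, Richmond and Gao: the relevant generating function is $\Delta$-analytic at the planar radius $\rho=1/\gamma$, with a singular exponent shifted by $\tfrac{5}{4}(h-2)$ relative to the $3$-connected \emph{planar} series --- that is, $\tfrac{5}{4}$ per unit of Euler genus, to be compared with $\tfrac{5}{2}$ per handle in the orientable case. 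Propagating this singular exponent through the successive (subcritical) compositions ``$3$-connected $\to$ $2$-connected $\to$ connected $\to$ all graphs'', each handled as for $\mathbb{S}_g$, and then applying transfer theorems of Flajolet--Odlyzko type, yields $b_n^{(h)}\sim \tilde c^{(h)}\,n^{5(h-2)/4-1}\gamma^n n!$ with $\tilde c^{(h)}>0$. A possible shortcut, making more direct use of Theorem~\ref{theo:main}, is to cut a non-orientable core along a one-sided simple closed curve so as to remove one or two cross-caps: by Dyck's theorem this relates it to a $2$-connected graph embeddable in $\mathbb{S}_{\lfloor (h-1)/2\rfloor}$ carrying the additional data of where the cross-cap(s) are reinserted, which accounts exactly for the extra polynomial factor.

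For the edge-refined statements one carries a variable $y$ marking edges through the whole decomposition, using the bivariate singular analysis of planar graphs of~\cite{gimeneznoy}. For fixed $\mu\in(1,3)$ the value $m=\lfloor\mu n\rfloor$ lies in the central range, where the number of edges obeys a local central limit theorem, so that extracting $[z^n y^m]$ costs an extra factor $n^{-1/2}$ compared with the univariate case, while the non-orientable core still contributes the same Euler-genus-driven exponent shift. Combining these gives $b_{n,m}^{(h)}\sim \tilde c_\mu^{(h)}\,n^{5h/4-4}(\gamma_\mu)^n n!$ with $\tilde c_\mu^{(h)}>0$.

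I expect the main obstacle to lie in the core analysis, and it has two facets. The first is obtaining the asymptotic enumeration of maps on $\mathbb{N}_h$ in the precise form required --- not merely the polynomial order, but the $\Delta$-analyticity and the exact singular exponent --- and, should the surgery shortcut be used instead, turning the ``cut along a one-sided curve'' operation into a clean, multiplicity-controlled correspondence with orientable data. The second is the topological bookkeeping: proving rigorously that Euler-genus additivity under $2$-sums and block amalgamations, together with the treatment of the orientably-simple discrepancy, does license replacing ``embeddable in $\mathbb{N}_h$'' by ``Euler genus at most $h$'' up to lower-order terms, so that the clean generating-function recursions --- and hence the singularity analysis --- are justified.
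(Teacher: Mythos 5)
Your outline captures the broad structure of the paper's argument — the same chain $\textrm{maps} \to \textrm{quadrangulations} \to \textrm{3-connected} \to \textrm{2-connected} \to \textrm{connected} \to \textrm{all graphs}$, driven by Robertson--Vitray face-width theorems and the Arqu\`es--Giorgetti/Bender--Canfield--Richmond map asymptotics on $\mathbb{N}_h$, with the exponent scaling $\tfrac{5}{4}$ per unit of Euler genus. However there are two substantive gaps in your plan, plus a terminological error that would derail the analysis if taken literally.

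First, the compositions $B^{\fw\ge3}_h=T^{\fw\ge3}_h(x,D)$ and $C^{\fw\ge2}_h=B^{\fw\ge2}_h(F,\cdot)$ are \emph{critical}, not subcritical. In a subcritical composition the inner series' singularity controls the composite, giving the ubiquitous order $3/2$; the entire point of the paper's Lemma~\ref{lem:comp} and Corollary~\ref{coro:comp} is that the scheme is critical, so the (negative) order $\alpha$ of the outer series survives. If the scheme were subcritical the genus would disappear from the exponent.

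Second, and more importantly, you do not identify what is genuinely new in the non-orientable case. The Arqu\`es--Giorgetti expression $\overrightarrow{M}_h = P_h(p,q,\sqrt{\Delta})/\Delta^{5h/2-3}$ involves $\sqrt{\Delta}$, which forces singular expansions in $(1-x/\rho)^{1/4}$ rather than $(1-x/\rho)^{1/2}$; the whole framework of expansion-series must be extended to fourth roots. Correspondingly, in every cutting argument (Lemmas~\ref{lem:Sgci}, \ref{lem:Bgt}, \ref{lem:Bg}, \ref{lem:Cg}) there is now a \emph{third} case besides ``$C$ surface-separating'' and ``$C$ cuts a handle'': the marked non-contractible cycle $C$ may cross a crosscap, reducing the Euler genus by exactly one. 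This case dominates, and is what makes the bounding series of order $\alpha+\tfrac14$ (still $>\alpha$, so negligible, but it is the bottleneck). Without this case the induction does not close. The paper also checks that after cutting one may land on an \emph{orientable} surface $\mathbb{S}_{g'}$ when $h$ is odd, and uses that the singular orders for $\mathbb{S}_{g'}$ and $\mathbb{N}_{2g'}$ agree.

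Finally, the paper does not replace ``embeddable in $\mathbb{N}_h$'' by ``Euler genus $\le h$''; it works directly with the non-orientable genus and the non-orientable face-width, invoking that Theorem~\ref{theo:Rob} holds verbatim for non-orientable genus. So the separate analysis of ``orientably simple'' graphs you foresee is not needed in the paper's route (the Euler-genus discussion appears only later, in the side remark on $e^{(\kappa)}_n$). Your proposed Dyck-surgery shortcut is not pursued, and as stated it leaves the multiplicity bookkeeping (how many ways the crosscap data can be reinserted, and how to control face-width after the surgery) unresolved; the paper sidesteps this by cutting at the level of near-irreducible quadrangulations, where the nesting structure of cycles homotopic to $C$ gives a clean injective decomposition.
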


In theory, the constants $c^{(g)}$ can be computed via non-linear recursions. Indeed, our computations relate $c^{(g)}$ to the asymptotic number of maps embedded on the surface $\mathbb{S}_g$, and weighted by their number of vertices, which are shown to obey such recursions in~\cite{BeCaRi93}. However, these recursions are so intricate that in practice is not easy to compute even the first few of these numbers. The same is true for the numbers $\tilde{c}^{(h)}$.
The case $\mu<1$ of the above theorems has been treated in the planar case in~\cite{gerkeetal}, and gives results of a different nature.
Also, it was shown in~\cite{gimeneznoy} that $\lim_{\mu\rightarrow 3^-} \gamma_{\mu} = \frac{256}{27}$, which is the exponential growth rate of triangulations embedded on a fixed surface. \\

There are three main ingredients in our proof. The first one is
the theory of map enumeration, started by Tutte in his pioneering
work on planar maps, and extended later by Arqu\`es, Bender,
Canfield, Gao, Richmond, Wormald,  and others to arbitrary
surfaces. Our main references in this context are \cite{BeCa86},
\cite{BeCaRi93} and \cite{BeGaRiWo96}. In particular, Bender and
Canfield \cite{BeCa86} showed that the number of rooted maps with
$n$ edges embeddable in $\mathbb{S}_g$ grows asymptotically as
$$
    t_g n^{5(g-1)/2} 12^n,
$$
for some constant $t_g > 0$. If we compare it with Estimate
(\ref{eq:asympt-orient}) in Theorem~\ref{theo:main}, we see that
they are very similar. This is no coincidence, since our counting
of graphs relies in a fundamental way on the counting of maps (the
extra factor of $n$ in the estimate occurs because maps are
rooted, and the absence of a factorial term is because they are
unlabelled).

The second ingredient is topological graph theory, in particular
the concept of face-width, which measures in some sense the local 
planarity of an embedding of a graph in a surface. According to
Whitney's theorem, planar 3-connected graphs have a unique
embedding in the sphere, but this is not true for arbitrary
surfaces. The key result  is that a 3-connected graph with large
enough face-width has a unique embedding \cite{Mohar}. It turns
out  that almost all 3-connected graphs have large face-width and,
as a consequence, the asymptotic enumeration of 3-connected
\emph{graphs} in a surface can be reduced to the enumeration of
3-connected \emph{maps}. In order to enumerate 3-connected maps of
genus $g$ we start from the known enumeration of maps of genus $g$
\cite{BeCa86,BeCaRi93} via associated quadrangulations.

The final step is to go from 3-connected graphs in a surface to
2-connected, connected and finally arbitrary graphs. Again, the
face-width plays the  main role in this reduction. A result of
Robertson and Vitray \cite{RoVi90} says that if a connected graph
$G$ of genus $g$ has face-width at least two, then $G$ has a
unique block of genus $g$ and the remaining blocks are planar. A
similar result holds for 2-connected graphs and 3-connected
components. As a consequence, the asymptotic enumeration of graphs
of genus $g$ can be reduced  to the planar case, which was
completely solved in \cite{gimeneznoy}.

There is a fundamental difference between the planar and
non-planar cases. For planar graphs we have at our disposal
\textit{exact} counting generating functions, defined through
functional and differential equations~\cite{gimeneznoy}. The
reason is precisely that 3-connected graphs have a unique
embedding, and there is a bijection with 3-connected maps, for
which we know the exact generating function. For higher surfaces
this is not the case and we have to \textit{approximate} the
counting series. If $f(x)$ is the generating function of interest,
we find series $f_1(x)$ and $f_2(x)$ that are computable and whose
coefficients have the same leading asymptotic estimates, and such
that $f_1(x)$ dominates $f(x)$ coefficient-wise from below, and
$f_2(x)$ from above. If we can estimate the coefficients of the
$f_i(x)$, then we can estimate those of $f(x)$. A key argument in
the proofs is the following. If a graph $G$ of genus $g$  has a
short non-contractible cycle $C$, then cutting $G$ along $C$
produces either a graph of genus $g-1$, or two graphs whose genera
add up to $g$. In either case, induction on the genus $g$ shows
that there are few such graphs and that the probability of a graph
having a short non-contractible cycle tends to zero as the size of
the graph grows. This is precisely where approximate counting
series come into play.

The paper is structured as follows. Section~\ref{sec:sing}
contains the analytic tools used in the paper. In
Section~\ref{sec:maps} we study the enumeration of a family of
maps, near-irreducible quadrangulations, closely related to
3-connected graphs. In order to prove our main results, we need a
careful analysis of quadrangulations according to their
face-width. The proof of Theorems~\ref{theo:main} and
\ref{theo:main2} comes in Section~\ref{sec:graphs}, again using
results on face-width. On the way, we also characterize the asymptotic
number of 2-connected graphs of genus $g$ (Theorem~\ref{theo:2conn}). 
In Section~\ref{sec:parameters} we apply the
machinery we have developed to analyze random graphs of genus $g$.

After we announced the result of Theorem~\ref{theo:main} at the conferences \emph{AofA'09} and \emph{Random Graphs and Maps on Surfaces (Institut Henri Poincar\'e, 2009)}, E.~Bender and Z.~Gao informed us of their ongoing project to address the same problem, with slightly different methods. They have recently issued a draft of their approach~\cite{BeGao}. As far as we can tell, the main difference with our paper will be in the way of handling the singular analysis of bivariate generating functions.

\section{Singularity analysis}\label{sec:sing}

In our work  singular expansions are expressed in terms both of square-roots
\emph{and\/} logarithms (indeed the leading terms  
in the series counting graphs embeddable on the torus and on 
the Klein bottle involve logarithms).

We define a set $D$ of ordered pairs of integers as follows:
$$
D = \mathbb{Z}^2 \,\backslash\ \{(2a, 0)\ ;\ a\in\mathbb{Z}\}.
$$
Let  $f(x)$ be a series with non-negative coefficients and finite
positive radius of convergence $\rho$. We use throughout the
following notations:
$$
X=\sqrt{1-x/\rho}, \quad L=\ln(1-x/\rho).
$$
Monomials of the form $X^aL^b$, with $(a,b)\in D$, are totally
ordered as follows (essentially a lexicographic order):
$X^aL^b<X^{a'}L^{b'}$ if either $a'<a$ or $\{a'=a, b'>b\}$.
An \emph{expansion-series} is a series $h(X,L)$ of the form
$$
h(X,L)=X^d\, \th(X,L),
$$
where $d \in\mathbb{Z}$, $\th(X,L)$ is analytic at $(0,0)$, and
$\th(X,L)$ is of the form $h_0(X)+h_1(X)L+\ldots+h_c(X)L^c$ (i.e., a polynomial in $L$)
with the $h_i$'s analytic at $0$.
The \emph{type} of $h(X,L)$ is the
largest $(a,b)\in D$ such that $[X^aL^b]h(X,L)\neq 0$.

The series $f(x)$ is said to admit a \emph{singular expansion} of
type $(a,b)\in D$ if, in a complex neighbourhood of $\rho>0$
(except on  $x-\rho\in\mathbb{R}_+$):
\begin{equation}\label{eq:sing}
f(x)=h(L,X),
\end{equation}
where $h(L,X)$ is an expansion-series of type $(a,b)$ (we also say
of type $X^aL^b$). The singular expansion is called \emph{strong}
if $f(x)$ is analytically continuable to a complex domain of the
form $\Omega=\{|x|\leq
\rho+\delta\}\backslash\{x-\rho\in\mathbb{R}_+\}$ for some
$\delta>0$. We consider mostly singular expansions of type $X^a$,
with $a\in\mathbb{Z}\backslash (2\mathbb{Z}_{\geq 0})$
---such singular expansion are called \emph{of order $a/2$}--- and
singular expansions of type $X^{2a}L$, with $a\in\mathbb{Z}_{\geq
0}$ ---called \emph{of order $a$}. In particular, if the order is
$0$, a singular expansion has $L$ as leading monomial.
With this definition, the order $\alpha$ can take values that are either
integers or half-integers (nonnegative or negative), which
we write as $\alpha\in\tfrac1{2}\mathbb{Z}$. 

\cacher{
\begin{equation}
f(x)=g(X)\log(Z)+h(X),\ \ \mathrm{with}\ Z=(1-x/\rho),\ X=\sqrt{Z},
\end{equation}
where $g(Z)$ and $h(X)$ are as follows:
\begin{itemize}
\item
If $\alpha$ is a non-negative integer, then $g(X)$ is of the form
$g(X)=Z^{\alpha}\tg(X)$, with $\tg(X)$ analytic at $0$ and
$\tg(0)\neq 0$; and $h(X)$ is analytic at $0$ and
$[X^{2k+1}]h(X)=0$ for $k< \alpha$.
\item
If $\alpha$ is a positive half-integer, then $[X^{\beta}]g(X)=0$
for $\beta\leq 2\alpha$ and $h(X)$ is of the form
$h(X)=Z^{\alpha}\th(X)$, with $\th(X)$ analytic at $0$ and
$\th(0)\neq 0$.
\item
If $\alpha$ is negative, then $g(X)$ is analytic at $0$, and
$h(X)$ is of the form $h(X)=Z^{\alpha}\th(X)$, with $\th(X)$
analytic at $0$ and $\th(0)\neq 0$.
\end{itemize}
}

From a singular expansion of  a series $f(x)$ one can obtain
automatically an asymptotic estimate for the coefficients
$[x^n]f(x)$.

\begin{theorem}[Transfer theorem~\cite{FlaSe}]\label{theo:transfer}
Let $f(x)$ be a series with non-negative coefficients that admits
a strong singular expansion of order $\alpha$ around its radius of
convergence $\rho>0$.
 Then $f_n=[x^n]f(x)$ satisfies
$$
f_n\sim c\ \!\rho^{-n}n^{-\alpha-1},
$$
where $c$ depends explicitly on the ``dominating'' coefficient of
the expansion-series $h(X,L)$.
\end{theorem}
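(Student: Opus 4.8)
The plan is to recognize this as a direct application of the transfer (singularity-analysis) machinery of Flajolet and Odlyzko~\cite{FlaSe}, after a one-step reduction to a single ``dominating'' standard function. Recall that the order $\alpha\in\tfrac12\mathbb{Z}$ is a genuine half-integer exactly when the dominating monomial of the expansion-series $h(X,L)$ representing $f$ near $\rho$ is $X^{2\alpha}$ (with $2\alpha$ odd), and is an integer exactly when that monomial is $X^{2\alpha}L$ (with $\alpha\ge 0$) or $X^{2\alpha}$ (with $\alpha<0$). Accordingly I would put
$$
\sigma_\alpha(x)=(1-x/\rho)^{\alpha}\ \ \text{if }\alpha\notin\mathbb{Z},\qquad \sigma_\alpha(x)=(1-x/\rho)^{\alpha}\ln(1-x/\rho)\ \ \text{if }\alpha\in\mathbb{Z},
$$
let $c_\star$ be the coefficient of the dominating monomial of $h$, and write $f(x)=c_\star\,\sigma_\alpha(x)+g(x)$. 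Since the branch cut of the fractional power and of the logarithm is precisely the removed ray $x-\rho\in\mathbb{R}_+$ (which, by Pringsheim's theorem, is indeed where the dominant singularity of a series with non-negative coefficients sits), both $\sigma_\alpha$ and hence $g$ are analytic on the slit domain $\Omega$ furnished by the \emph{strong} hypothesis.

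The second step is to read off $[x^n]\sigma_\alpha(x)$, which reduces to standard facts from~\cite{FlaSe}. For $\alpha\notin\mathbb{Z}$ this is the classical estimate $[x^n](1-x/\rho)^{\alpha}=\rho^{-n}\binom{\alpha}{n}(-1)^n\sim \rho^{-n}n^{-\alpha-1}/\Gamma(-\alpha)$, legitimate because a half-integer $\alpha$ never makes $\Gamma(-\alpha)$ singular. For $\alpha\in\mathbb{Z}$ I would differentiate the identity $[x^n](1-x/\rho)^s=\rho^{-n}\binom{s}{n}(-1)^n$ with respect to $s$ at $s=\alpha$: for $\alpha\ge 0$ the binomial coefficient has a simple zero there once $n>\alpha$, which yields $[x^n]\sigma_\alpha(x)\sim(-1)^{\alpha+1}\alpha!\,\rho^{-n}n^{-\alpha-1}$, while for $\alpha<0$ one simply has the negative-binomial estimate $\sim\rho^{-n}n^{-\alpha-1}/(-\alpha-1)!$. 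In every case $[x^n]\sigma_\alpha(x)\sim c_\alpha\,\rho^{-n}n^{-\alpha-1}$ with $c_\alpha\neq 0$ explicit, which gives the constant $c=c_\star c_\alpha$ of the statement.

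The third step is to control the remainder: once I show $[x^n]g(x)=o(\rho^{-n}n^{-\alpha-1})$, adding the two contributions gives $f_n\sim c\,\rho^{-n}n^{-\alpha-1}$ and finishes the argument. Near $\rho$, $g$ is again an expansion-series with the dominating monomial of $h$ removed; since that monomial was, by definition, the maximum in the given order among those occurring in $h$, each remaining monomial of $g$ is either the single polynomial $(1-x/\rho)^{\alpha}$ (possible only when the type carried a logarithm and $\alpha\ge 0$; it contributes nothing to $[x^n]$ for $n>\alpha$) or has $X$-exponent at least $2\alpha+1$. Hence, away from that polynomial, $g(x)=O\bigl(|1-x/\rho|^{\alpha+1/2}(\log\tfrac{1}{|1-x/\rho|})^{K}\bigr)$ for a fixed $K$, uniformly as $x\to\rho$ in $\Omega$. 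As $\Omega$ contains a $\Delta$-domain at $\rho$, the big-$O$ transfer theorem of~\cite{FlaSe} --- proved by the usual Hankel-type contour (a circular arc of radius $\rho(1+1/n)$ closed off near the slit) --- then gives $[x^n]g(x)=O\bigl(\rho^{-n}n^{-\alpha-3/2}(\log n)^{K}\bigr)=o(\rho^{-n}n^{-\alpha-1})$, the gained half-power of $n$ swallowing the logarithm. The one place where the argument is more than bookkeeping, and where the logarithms genuinely intervene, is the integer case $\alpha\ge 0$: there the ``obvious'' dominating power $(1-x/\rho)^{\alpha}$ is a polynomial and thus asymptotically invisible, so the entire leading behaviour is carried by its logarithmic companion, and one must keep careful track of the branch of $\ln$ and of the simple zero of $\binom{s}{n}$ at $s=\alpha$; everything else is the off-the-shelf singularity-analysis machinery of~\cite{FlaSe}, which is why the statement is credited there.
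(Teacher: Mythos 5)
The paper offers no proof of this statement at all: it is quoted as a black box from Flajolet--Sedgewick, so there is no ``paper's proof'' to compare against. Your reconstruction is the standard singularity-analysis argument from that reference, and it is correct: split off the dominating standard function $\sigma_\alpha$, read off its coefficients (classical binomial estimate for $\alpha\notin\mathbb{Z}$, a $\partial/\partial s$ trick exploiting the simple zero of $\binom{s}{n}$ at $s=\alpha$ for the logarithmic case $\alpha\in\mathbb{Z}_{\ge 0}$, negative-binomial for $\alpha<0$), and dispose of the remainder with the big-$O$ transfer over a $\Delta$-domain, which is what the \emph{strong} hypothesis guarantees. You also correctly isolate the one genuinely delicate point, namely that for $\alpha\in\mathbb{Z}_{\ge 0}$ the term $(1-x/\rho)^\alpha$ is a polynomial whose contribution vanishes, so the entire main term comes from the log-companion.

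One small inconsistency: your displayed definition of $\sigma_\alpha$ attaches the factor $\ln(1-x/\rho)$ for \emph{all} $\alpha\in\mathbb{Z}$, but for $\alpha$ a negative integer the dominating monomial (per the paper's definition of order, which you quote correctly one sentence earlier) is $X^{2\alpha}$ with no logarithm, and indeed the negative-binomial estimate you invoke a few lines later is the one for $(1-x/\rho)^\alpha$ without the log. If you kept the log for $\alpha<0$ you would introduce a spurious $\log n$ factor that the statement does not allow. The fix is just to split the integer case into $\alpha\ge 0$ (with $L$) and $\alpha<0$ (without $L$) in the display; the rest of your argument already does the right thing.
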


Let us now extend the concept to a bivariate series $f(x,u)$ with
non-negative coefficients. For $u>0$, let $\rho(u)$ be the radius
of convergence of $x\mapsto f(x,u)$. The function $\rho(u)$ is
called the \emph{singularity function} of $f(x,u)$ with respect to
$x$, and each point $(x_0,u_0)$ such that $x_0=\rho(u_0)$ is
called a \emph{singular point} of $f(x,u)$. We use the notations
$$
X=\sqrt{1-x/\rho(u)},\quad  L=\ln(1-x/\rho(u)),\quad  U=u-u_0.
$$
A trivariate series $h(X,L,U)$ is called an \emph{expansion
series} if it is of the form
$$
h(X,L,U)=X^d\, \th(X,L,U),
$$
with $d\in\mathbb{Z}$, $\th(X,L,U)$ analytic at $(0,0,0)$ and of
polynomial dependence in $L$, that is, $\th(X,L,U)$ is of the form
$h_0(X,U)+h_1(X,U)L+\ldots+h_c(X,U)L^c$, with the $h_i$'s analytic at $(0,0)$. If there is some
$(a,b)\in D$ such that $[X^aL^b]h(X,L,U)$  is non-zero at $U=0$
and
 $[X^{a'}L^{b'}]h(X,L,U)=0$ for $(a',b')>(a,b)$, then
the expansion series is said to be \emph{of type $(a,b)$}.

Let $(x_0,u_0)$ be a singular point of $f(x,u)$ such that
$\rho'(u_0)\neq 0$ and such that $\rho(u)$ is analytically
continuable to a complex neighbourhood of $u_0$. Then   $f(x,u)$
is said to admit a \emph{singular expansion} of type $(a,b)\in D$,
or of type $X^aL^b$, if  in a complex neighbourhood of $(x_0,u_0)$
(except on  the set $1-x/\rho(u)\in\mathbb{R}_{\leq 0}$) we have
\begin{equation}\label{eq:sing_biv}
f(x,u)=h(X,L,U),
\end{equation}
where $h(X,L,U)$ is an expansion series of type $(a,b)$. The
singular expansion is called \emph{strong} if $f(x,u)$ is
analytically continuable to a complex domain of the form
$\Omega=\{|x|\leq x_0+\delta, |u|\leq
u_0+\delta\}\backslash\{1-x/\rho(u)\in\mathbb{R}_{\leq 0}\}$ for some
$\delta>0$.
 Again we consider mostly singular expansions of type $X^a$, with
$a\in\mathbb{Z}\backslash (2\mathbb{Z}_{\geq 0})$
---called \emph{of order $a/2$}--- and
singular expansions of type $X^{2a}L$, with $a\in\mathbb{Z}_{\geq
0}$ ---called \emph{of order $a$}. A singular expansion is called
\emph{log-free} if $h$ does not involve $L$, that is, $\partial_L
h(X,L,U)=0$. Singular expansions of order $1/2$ are called
\emph{square-root} singular expansions.

At some points we will need the following easy lemma:

\begin{lemma}\label{lem:sing_prod}
If two series $f(x,u)$, $g(x,u)$ with non-negative coefficients
admit singular expansions of orders $\alpha\leq\alpha'$
around $(x_0,u_0)$, then the product-series $h(x,u)=f(x,u)\cdot g(x,u)$ admits a 
singular expansion at $(x_0,u_0)$, 
\begin{itemize}
\item
of order $\alpha+\alpha'$ if $\alpha<0$ and $\alpha'<0$,
\item
of order $\alpha$ if $\alpha<0<\alpha'$.
\end{itemize}
\end{lemma}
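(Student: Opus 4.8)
The plan is to use the (essentially trivial) closure of expansion series under multiplication and then to read off the type of the product. We work throughout with a common singularity function $\rho(u)$ for $f$ and $g$ near $u_0$ --- the only situation that arises in our applications --- so that the variables $X,L,U$ are the same for both, and we may assume $f,g\not\equiv 0$. Writing $f=X^{d_f}\tilde h_f$ and $g=X^{d_g}\tilde h_g$ as expansion series of orders $\alpha,\alpha'$ in a slit neighbourhood of $(x_0,u_0)$, one has $fg=X^{d_f+d_g}\,\tilde h_f\tilde h_g$, with $\tilde h_f\tilde h_g$ analytic at $(0,0,0)$ and polynomial in $L$; hence $fg$ is again an expansion series (the side conditions $\rho'(u_0)\neq 0$ and analyticity of $\rho$ near $u_0$ being inherited), and only its type remains to be found. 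From the hypotheses I would record two normalisations: since the order determines the type, one may take $d_f=2\alpha$, $d_g=2\alpha'$, and then the leading coefficients at $U=0$ satisfy $\tilde h_f(0,0,0)\neq 0$, $\tilde h_g(0,0,0)\neq 0$; and, crucially, no factor contains a monomial $X^{2\alpha}L^{k}$ (resp.\ $X^{2\alpha'}L^{k}$) with $k\ge 1$, since it would exceed the declared type --- equivalently $[X^0L^k]\tilde h_f(X,L,0)=[X^0L^k]\tilde h_g(X,L,0)=0$ for all $k\ge 1$.

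If $\alpha<0$ and $\alpha'<0$, then $fg=X^{2(\alpha+\alpha')}\tilde h_f\tilde h_g$; its $X^0$-coefficient at $U=0$ equals $\tilde h_f(0,0,0)\tilde h_g(0,0,0)\neq 0$ and carries no $L$ (in each splitting $k=k_1+k_2$ with $k\ge1$ one of $k_1,k_2$ is $\ge 1$, so the corresponding product vanishes by the second normalisation), and since $\tilde h_f\tilde h_g$ is analytic nothing more singular occurs; hence $fg$ has type $X^{2(\alpha+\alpha')}$, i.e.\ order $\alpha+\alpha'$. If $\alpha<0<\alpha'$, I would split $g=g_0(X,U)+R(X,L,U)$ into its analytic part $g_0$ (the monomials $X^{2j}$, $j\ge 0$) and the remainder $R$, all of whose monomials have $X$-exponent $\ge 2\alpha'\ge 1$; then $fg=fg_0+fR$, where $fg_0=X^{2\alpha}\,\tilde h_f g_0$ has type $X^{2\alpha}$ provided $g_0(0,0)\neq 0$ (no $L$ arises at $X^{2\alpha}$, since $g_0$ is $L$-free and $\tilde h_f$ has no $X^0L^{\ge1}$ term), while every monomial of $fR$ has $X$-exponent $\ge 2\alpha+2\alpha'>2\alpha$ and therefore lies strictly below $X^{2\alpha}$ in the monomial order, so it can neither cancel nor upgrade the leading term. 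Hence $fg$ has type $X^{2\alpha}$, i.e.\ order $\alpha$.

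The one substantive point is $g_0(0,0)\neq 0$: since $R\to 0$ as $x\to x_0^-$ (its monomials have positive $X$-exponent), $g_0(0,0)=\lim_{x\to x_0^-}g(x,u_0)=\sum_n[x^n]g(x,u_0)\,x_0^n$, which is strictly positive precisely because $g$ has non-negative coefficients and is not identically zero --- the only place non-negativity is used. I expect this, together with the small but necessary check that multiplication does not create a spurious logarithm at the leading order (dispatched by the second normalisation above), to be the main thing to get right; everything else is bookkeeping with the $X^d\tilde h$ normal form.
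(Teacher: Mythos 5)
The paper states this as ``the following easy lemma'' and gives no proof, so there is nothing in the text to compare against; your proof is the natural (and surely intended) one, based on multiplying the two expansion series and reading off the type of the product. Your observation that non-negativity of $g$ is exactly what is needed to get $g_0(0,0)>0$ in the case $\alpha<0<\alpha'$ is correct and is the genuine content here. That said, one step is asserted without justification and another is stated more weakly than what you actually use.

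The assertion ``since the order determines the type, one may take $d_f=2\alpha$'' is a non sequitur: the order determining the type says nothing about which exponents $d$ are admissible in the representation $f=X^{d}\th_f$ with $\th_f$ analytic. What you actually need is that $f/X^{2\alpha}$ is analytic, i.e.\ that $[X^{a}L^{b}]f=0$ for every $a<2\alpha$ and every $b$. This does follow from the type condition, but only because for $a<2\alpha<0$ every pair $(a,b)$ lies in $D$ (the excluded pairs $(2a,0)$ should be read as $a\in\mathbb{Z}_{\ge0}$, which is clearly what the paper means given that types $X^{-2},X^{-4},\dots$ are explicitly allowed in the list of admissible orders), so the type condition forces all those coefficients to vanish identically. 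Spelling this out is necessary; the way you phrase it, the normalisation is simply taken for granted.

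The second issue is that you record the vanishing conditions only at $U=0$: ``$[X^0L^k]\tilde h_f(X,L,0)=0$ for all $k\ge 1$''. What the definition of type gives you --- and what you actually need --- is the identical vanishing $[X^{2\alpha}L^{k}]f(X,L,U)\equiv 0$ in $U$ for all $k\ge1$ (and likewise for $g$). The type of the product is defined by requiring the higher-order coefficients to vanish \emph{as functions of $U$}, not merely at $U=0$; the weaker statement you record would only give you vanishing at $U=0$ of $[X^{2(\alpha+\alpha')}L^{k}](fg)$ and would not establish the type of $fg$. Since the stronger fact is available for free, this is a one-line fix, but as written the logic does not close. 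With these two points repaired, the proof is sound.
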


\paragraph{Remark.}
Our definition of singular expansions may seem a bit technical,
but it meets the following convenient requirements: they are
closed under derivation and integration (see
Lemma~\ref{lem:integrate} in this section), closed under product,
and they include the basic singular functions $\sqrt{1-x/\rho}$
which typically appear in series counting maps, and
$\ln(1-x/\rho)$ which appear in series counting unrooted graphs in
the torus and in the Klein bottle.

\medskip
From bivariate singular expansions, it is possible to get
asymptotic estimates for the coefficient $f_n=[x^n]f(x,1)$, (just
apply Theorem~\ref{theo:transfer} to $f(x,1)$), but also to get
asymptotic estimates according to two parameters.

\begin{theorem}[Local limit theorem~\cite{FlaSe}]
Suppose that $f(x,u)=\sum_{n,m}f_{n,m}x^nu^m$ admits a strong singular
expansion of order $\alpha$ around each singular point
$(\rho(u_0),u_0)$, and let $\rho(u)$ be the singularity function
of $f(x,u)$. Suppose that the function $\mu(u)=-u\rho'(u)/\rho(u)$
is strictly increasing, and define $a=\mu(0^+)$,
$b=\lim_{u\to\infty} \mu(u)$. For each $\mu\in(a,b)$, let $u_0$
be the positive value such that $\mu(u_0)=\mu$, and let
$\gamma(\mu)=1/\rho(u_0)$. Then there exists a constant $c(\mu)>0$
such that
  $$
  f_{n,m}\sim c(\mu)\ \!n^{-\alpha-3/2}\gamma(\mu)^n\ \ \mathrm{when}\ n\to\infty\ \mathrm{and}\ m=\lfloor \mu n \rfloor.
  $$
\end{theorem}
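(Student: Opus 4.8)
The plan is to reduce the bivariate statement to a uniform application of the univariate transfer theorem (Theorem~\ref{theo:transfer}) after a change of variables that ``tilts'' the series by $u$. First I would fix $\mu\in(a,b)$ and let $u_0=u_0(\mu)$ be the unique positive solution of $\mu(u_0)=\mu$, which exists because $\mu(u)=-u\rho'(u)/\rho(u)$ is strictly increasing and continuous with the stated limits $a,b$; set $\rho_0=\rho(u_0)$ and $\gamma(\mu)=1/\rho_0$. The quantity $m=\lfloor\mu n\rfloor$ is then, by the classical saddle-point heuristic, exactly the ``typical'' exponent of $u$ in the coefficients of $f(x,u_0)$ weighted by $x^n$; making this rigorous is the content of the theorem. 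The idea is to write, by Cauchy's formula,
\[
f_{n,m}=[x^nu^m]f(x,u)=\frac{1}{2\pi i}\oint \frac{[x^n]f(x,u)}{u^{m+1}}\,du,
\]
and to evaluate the inner coefficient $[x^n]f(x,u)$ asymptotically using the strong singular expansion of order $\alpha$ around $(\rho(u),u)$, uniformly for $u$ in a neighbourhood of $u_0$.

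The key steps, in order, are as follows. (i) Use the hypothesis that $f(x,u)$ admits a \emph{strong} singular expansion of order $\alpha$ at each singular point $(\rho(u),u)$, together with the analytic continuability of $\rho(u)$ near $u_0$, to apply the transfer theorem \emph{with a parameter}: one obtains, uniformly for $u$ in a complex neighbourhood of $u_0$,
\[
[x^n]f(x,u)\sim C(u)\,\rho(u)^{-n}n^{-\alpha-1},
\]
where $C(u)$ is the dominating coefficient of the expansion series, an analytic nonvanishing function of $u$ near $u_0$ (nonvanishing because $(a,b)\in D$ forces the leading coefficient to be nonzero, and by continuity it stays nonzero nearby). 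The uniformity here is standard for strong expansions since the Flajolet--Odlyzko contour can be chosen uniformly over a compact neighbourhood of $u_0$; this is where I would be most careful. (ii) Substitute this estimate into the Cauchy integral in $u$, so that
\[
f_{n,m}\sim n^{-\alpha-1}\cdot\frac{1}{2\pi i}\oint C(u)\,\rho(u)^{-n}u^{-m-1}\,du
=n^{-\alpha-1}\cdot\frac{1}{2\pi i}\oint C(u)\,\exp\bigl(-n\log\rho(u)-m\log u\bigr)\frac{du}{u}.
\]
(iii) Apply the saddle-point method to the $u$-integral. The phase is $\phi(u)=-n\log\rho(u)-m\log u$ with $m=\lfloor\mu n\rfloor$; its derivative $\phi'(u)/n = -\rho'(u)/\rho(u)-\mu/u$ vanishes precisely when $-u\rho'(u)/\rho(u)=\mu$, i.e.\ at $u=u_0$. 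Strict monotonicity of $\mu(u)$ guarantees $\mu'(u_0)\neq 0$, which translates into $\phi''(u_0)\neq 0$, so the saddle is nondegenerate and the standard quadratic approximation gives a contribution of order $n^{-1/2}$. Collecting,
\[
f_{n,m}\sim n^{-\alpha-1}\cdot\frac{C(u_0)}{\sqrt{2\pi\,\phi''(u_0)/n}\;u_0}\,\rho_0^{-n}u_0^{-\lfloor\mu n\rfloor}
= c(\mu)\,n^{-\alpha-3/2}\gamma(\mu)^n,
\]
after absorbing the $u_0^{-\lfloor\mu n\rfloor}$ and the bounded factors into $c(\mu)>0$ (the discrepancy between $u_0^{-\lfloor\mu n\rfloor}$ and $u_0^{-\mu n}$ contributes only a bounded oscillating factor, which one checks does not destroy the asymptotic — this is a routine but slightly annoying point, handled e.g.\ by noting $\gamma(\mu)^n$ already encodes the exponential rate and the sub-exponential factor is unaffected).

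The main obstacle I anticipate is step (i): obtaining the estimate $[x^n]f(x,u)\sim C(u)\rho(u)^{-n}n^{-\alpha-1}$ \emph{uniformly} in a full complex neighbourhood of $u_0$, not just for real $u$. This requires that the strong singular expansion of $f(x,u)$ at $(\rho(u),u)$ — and in particular the Flajolet--Odlyzko pac-man contour and the error bounds — can be taken uniform as $u$ ranges over a compact complex neighbourhood of $u_0$; this should follow from the definition of \emph{strong} singular expansion (analytic continuation to the domain $\Omega$, which by compactness gives a uniform $\delta$) combined with the analyticity of $\rho(u)$, but the bookkeeping of error terms through both the $x$-transfer and the subsequent $u$-saddle-point is the delicate part. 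A secondary subtlety is verifying that the unique dominant singularity in $x$ really is at $\rho(u)$ for $u$ near $u_0$ (no competing singularities of equal modulus), which one gets from the hypothesis that a singular expansion of the prescribed type holds at \emph{each} singular point together with positivity of coefficients (Pringsheim). Everything else — the existence and nondegeneracy of the saddle, the evaluation of the Gaussian integral — is forced by the monotonicity hypothesis on $\mu(u)$ and is routine.
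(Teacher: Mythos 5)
The paper does not prove this statement; it is quoted from~\cite{FlaSe}, so there is no internal proof to compare against. Your sketch follows the standard route found in the reference: apply the transfer theorem in $x$ uniformly for $u$ in a complex neighbourhood of $u_0$, then a saddle-point argument in $u$ whose saddle sits exactly at $u_0$ because $\mu(u_0)=\mu$, with nondegeneracy supplied by $\mu'(u_0)>0$ and the Gaussian evaluation contributing the extra factor $n^{-1/2}$. Your identification of where the real technical content lies (uniformity of the Flajolet--Odlyzko transfer over a compact complex $u$-neighbourhood, flowing from the strong-expansion hypothesis together with Pringsheim) is the right diagnosis.

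There is a genuine bookkeeping error at the end. After the saddle-point evaluation you obtain the exponential factor $\rho_0^{-n}u_0^{-\lfloor\mu n\rfloor}$ and propose to absorb $u_0^{-\lfloor\mu n\rfloor}$ into the constant $c(\mu)$. You cannot: unless $u_0=1$, that factor is itself exponential in $n$. The correct exponential growth rate is $\gamma(\mu)=u_0^{-\mu}/\rho(u_0)$, and what is bounded is only the fractional-part correction $u_0^{\mu n-\lfloor\mu n\rfloor}$ (which in general oscillates rather than converges, the usual mild abuse in calling $c(\mu)$ a constant). You may have been misled by the paper's statement, which sets $\gamma(\mu)=1/\rho(u_0)$ and, read literally, omits the $u_0^{-\mu}$ factor; however, this is inconsistent with the paper's own description of $\gamma_\mu$ in the introduction as the exponential growth rate of planar graphs with $\lfloor\mu n\rfloor$ edges. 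A sanity check: for $f(x,u)=(1-x-u)^{-1}$ one has $\rho(u)=1-u$, $\mu(u)=u/(1-u)$, $u_0=\mu/(1+\mu)$, $1/\rho(u_0)=1+\mu$, yet $\binom{n+\lfloor\mu n\rfloor}{n}^{1/n}\to (1+\mu)^{1+\mu}/\mu^{\mu}=u_0^{-\mu}/\rho(u_0)$. A smaller point: strict monotonicity of $\mu$ does not by itself give $\mu'(u_0)\neq 0$ (consider $u\mapsto u^3$ near $0$); the nondegeneracy $\mu'(u_0)>0$ is really a separate hypothesis (the usual variability condition), though it does hold in all the applications in this paper.
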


\cacher{ Then $f(x,u)$ is said to admit a singular expansion of
order $\alpha\in\tfrac1{2}\mathbb{Z}$ if the following expansion
holds in a complex neighbourhood of $(x_0,u_0)$ (except on the
part where $x-\rho(u)\in\mathbb{R}_+$):
\begin{equation}
f(x)=g(X,U)\log(Z)+h(X,U),\ \mathrm{with}\ Z=(1-x/\rho(u)),\ X=\sqrt{Z},\ U=u-u_0,
\end{equation}
where $g(X,U)$ and $h(X,U)$ are as follows:
\begin{itemize}
\item
If $\alpha$ is a non-negative integer, then $g(Z,U)$ is of the form
$g(Z)=Z^{\alpha}\tg(X,U)$, with $\tg(X,U)$ analytic at $(0,0)$ and $\tg(0,0)\neq 0$;
and $h(X,U)$ is analytic at $0$ and  $[X^{2k+1}]h(X,U)=0$
for $k< \alpha$.
\item
If $\alpha$ is a positive half-integer, then $[X^{\beta}]g(X,U)=0$
for $\beta\leq 2\alpha$  and $h(X,U)$ is of the form
$h(X,U)=Z^{\alpha}\th(X,U)$, with $\th(X,U)$ analytic at $(0,0)$
and $\th(0,0)\neq 0$.
\item
If $\alpha$ is negative, then $g(X,U)$ is analytic at $(0,0)$, and $h(X,U)$
is of the
form $h(X,U)=Z^{\alpha}\th(X,U)$, with $\th(X,U)$ analytic at $(0,0)$
and $\th(0,0)\neq 0$.
\end{itemize}

The singular term is said to have \emph{no logarithmic term} if
$g(X,U)=0$. In the special case $\alpha=1/2$, the singular
expansion is called a \emph{square-root singular expansion}.
 }

\begin{lemma}[Exchange of variables, from~\cite{DrGiNo08}]\label{lem:exch}
With $f(x,u)$ and $\rho(u)$ as in the previous lemma, assume
$(x_0,u_0)$ is a singular point and  $\rho'(u_0)\ne0$.

If $f(x,u)$ admits a singular expansion of order $\alpha$ around
$(x_0,u_0)$ with leading variable $x$, then $f(x,u)$ also admits a
singular expansion of order $\alpha$ around $(x_0,u_0)$ with
leading variable $u$. In addition, if the singular expansion in $x$ is strong,
then the singular expansion in $u$ is also strong.
 \end{lemma}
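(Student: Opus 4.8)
The plan is to pass from the parametrisation with leading variable $x$ to the one with leading variable $u$ by a single analytic change of the local variables. Since $\rho$ is analytic near $u_0$ and $\rho'(u_0)\neq 0$, it is locally invertible; write $\sigma:=\rho^{-1}$, so $\sigma$ is analytic near $x_0=\rho(u_0)$, with $\sigma(x_0)=u_0$ and $\sigma'(x_0)=1/\rho'(u_0)\neq 0$ (in particular the hypotheses of the definition are automatically satisfied for the $u$-parametrisation). The key elementary fact is that, in a neighbourhood of $(x_0,u_0)$, the two functions $1-x/\rho(u)$ and $1-u/\sigma(x)$ vanish exactly on the same analytic curve $\{x=\rho(u)\}=\{u=\sigma(x)\}$, and each vanishes there to first order in $u$ (its $u$-derivative at $(x_0,u_0)$ is nonzero, precisely because $\rho'(u_0)\neq 0$). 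By Taylor's theorem with remainder each equals $(u-\sigma(x))$ times a nonvanishing analytic factor, hence there is a function $\phi(x,u)$, analytic and nonvanishing near $(x_0,u_0)$, with $1-x/\rho(u)=\phi(x,u)\,\bigl(1-u/\sigma(x)\bigr)$. In the situations where the lemma is applied one has $\rho'(u_0)<0$, so $\phi(x_0,u_0)>0$; in general one may have to pass to the appropriate branch or replace $u$ by $-u$, which does not affect the argument.

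Set $\widehat X=\sqrt{1-u/\sigma(x)}$, $\widehat L=\ln(1-u/\sigma(x))$, $\widehat U=x-x_0$, the variables attached to the $u$-parametrisation. From the displayed identity we get $X=\sqrt{\phi}\,\widehat X$ and $L=\widehat L+\ln\phi$, with $\sqrt{\phi}$ and $\ln\phi$ analytic near $(x_0,u_0)$; and from $1-u/\sigma(x)=(\sigma(x)-u)/\sigma(x)$ we get $U=u-u_0=(\sigma(x)-u_0)-\sigma(x)\,\widehat X^{\,2}$, an analytic function of $(\widehat X,\widehat U)$ vanishing at $(0,0)$. Substituting into the given expansion $f(x,u)=h(X,L,U)=X^{d}\widetilde h(X,L,U)$, with $\widetilde h$ analytic at the origin and of degree $\le c$ in $L$, one has $X^{d}=\phi^{d/2}\widehat X^{\,d}$ with $\phi^{d/2}$ analytic and nonvanishing, each power $L^{j}=(\widehat L+\ln\phi)^{j}$ is still polynomial of degree $\le c$ in $\widehat L$, and each coefficient $h_j(X,U)$ becomes an analytic function of $(\widehat X,\widehat U)$ after the substitution. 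Hence $f(x,u)$, re-expressed in $(\widehat X,\widehat L,\widehat U)$, is again $\widehat X^{\,d}$ times a series analytic at the origin and polynomial in $\widehat L$, i.e.\ an expansion series with leading variable $u$.

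It remains to check that the order is preserved. The minimal power of $\widehat X$ occurring is exactly $d$, since $\phi^{d/2}$ has nonzero value at $(x_0,u_0)$; and, writing the given leading monomial as $X^{2\alpha}$ (when $\alpha$ is a half-integer that is not a nonnegative integer, or is negative) or as $X^{2\alpha}L$ (when $\alpha$ is a nonnegative integer), one checks that its image under the substitution contributes a nonzero coefficient to $\widehat X^{\,2\alpha}$, resp.\ to $\widehat X^{\,2\alpha}\widehat L$, at $\widehat U=0$: all monomials of strictly higher order in $h$ map to strictly higher powers of $\widehat X$, the $X^{2\alpha}$-part cannot create a term in $\widehat L$ at order $\widehat X^{2\alpha}$, and the relevant coefficient only picks up the nonzero factor $\phi(x_0,u_0)^{\alpha}$. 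Thus the new expansion has the same type, hence the same order $\alpha$. For the strong case, near $(x_0,u_0)$ the cut $\{1-x/\rho(u)\in\mathbb R_{\le 0}\}$ coincides with $\{1-u/\sigma(x)\in\mathbb R_{\le 0}\}$ because $\phi>0$ there, so the analytic continuation of $f$ furnished by the strong expansion in $x$ already lives on a domain of the required form $\{|x|\le x_0+\delta',\,|u|\le u_0+\delta'\}\setminus\{1-u/\sigma(x)\in\mathbb R_{\le 0}\}$ for a suitable $\delta'>0$, whence the expansion in $u$ is strong as well.

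The main obstacle is the bookkeeping in the last paragraph: one must verify that the substitution $(X,L,U)\mapsto(\widehat X,\widehat L,\widehat U)$ sends the class of expansion series into itself \emph{without changing the type}, i.e.\ that no cancellation lowers the exponent of $\widehat X$ and no spurious logarithm appears or disappears at the dominant order. Tracking the parity in $\widehat X$ of the analytic factors $\sqrt\phi$ and $\sigma(x)$ along the curve $u=\sigma(x)$, and the global matching of the two cut sets in the strong case, is where the care is needed.
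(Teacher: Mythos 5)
Your proposal follows essentially the same route as the paper's own proof sketch: both rely on the local inverse $\sigma=\rho^{-1}$ and the key factorisation $1-x/\rho(u)=\phi(x,u)\bigl(1-u/\sigma(x)\bigr)$ with $\phi$ analytic and nonvanishing near $(x_0,u_0)$ (you obtain this via Taylor/first-order vanishing, the paper invokes the Weierstrass preparation theorem, which gives exactly the same factorisation $x-\rho(u)=(u-R(x))Q(x,u)$), after which one substitutes $X=\sqrt{\phi}\,\widehat X$, $L=\widehat L+\ln\phi$, and re-expresses $U$ analytically in $(\widehat X,\widehat U)$. Your write-up is more explicit than the paper's one-paragraph sketch on the bookkeeping of the substitution and on the matching of the cut sets in the strong case, while the paper is terser and defers the details to the cited reference; in both your and the paper's argument, the verification that the type (hence the order) is preserved leans on the fact that, by the definition of ``type $(a,b)$'', the coefficients $[X^{a'}L^{b'}]h$ for $(a',b')>(a,b)$ vanish identically (not merely at $U=0$), which is what guarantees no lower power of $\widehat X$ can appear and that the leading coefficient picks up only the nonzero factor $\phi(x_0,u_0)^{\alpha}$.
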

\noindent\emph{Sketch of proof.} Let $R(x)$ be the local inverse
of $\rho(u)$ at $u_0$. Then, by the Weierstrass preparation
theorem, there exists $Q(x,u)$ analytic at $(x_0,u_0)$, with
$Q(x_0,u_0)\neq 0$, such that $x-\rho(u)=(u-R(x))\cdot Q(x,u)$ in
a neighbourhood of $(x_0,u_0)$. Replacing $x-\rho(u)$ by
$(u-R(x))Q(x,u)$ and rearranging terms, one obtains a singular
expansion with leading variable $u$; and this singular expansion is 
strong if the one in $x$ is strong (indeed, by definition, the domain $\Omega$
of analytic continuation required by strong expansions is symmetric
in $x$ and $u$).
 \hfill\qed

\medskip
\noindent By Lemma~\ref{lem:exch}, we can omit to mention which is
the leading variable in a singular expansion, since the other
variable could be chosen as well.

We need the following two technical lemmas to manipulate singular
expansions.

\begin{lemma}[Derivation and integration, from~\cite{DrGiNo08}]\label{lem:integrate}
Let $f(x,u)$ be a series that admits a singular expansion of order
$\alpha\in\tfrac{1}{2}\mathbb{Z}$ around $(x_0,u_0)$. Then the
series $\frac{\partial f}{\partial x}(x,u)$ admits a singular
expansion of order $\alpha-1$ around $(x_0,u_0)$ with the same
singularity function; and  the series
$\int_{0}^xf(s,u)\mathrm{d}s$ admits a singular expansion of order
 $\alpha+1$ around $(x_0,u_0)$ with the same singularity function.

Moreover, if the expansion of $f(x,u)$ is strong, then so are the ones of $\frac{\partial f}{\partial x}(x,u)$ and $\int_{0}^xf(s,u)\mathrm{d}s$.
\end{lemma}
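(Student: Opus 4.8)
The plan is to verify the statement by substituting directly into the definition of a singular expansion, treating the derivative and the primitive in turn. Write the singular expansion of $f$ near $(x_0,u_0)$ as $f(x,u)=h(X,L,U)=X^d\,\th(X,L,U)$, with $\th$ analytic at the origin and a polynomial in $L$, and recall $X=\sqrt{1-x/\rho(u)}$ and $L=\ln(1-x/\rho(u))$. Since $\rho(u_0)=x_0>0$ and $\rho$ is analytic near $u_0$, the function $1/\rho(u)$ is analytic there, and a short computation gives $\partial_x X=-\tfrac{1}{2\rho(u)}X^{-1}$ and $\partial_x L=-\tfrac{1}{\rho(u)}X^{-2}$. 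These two identities are the only analytic input needed.

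For $\partial f/\partial x$, apply the chain rule $\partial_x f=(\partial_X h)\,\partial_x X+(\partial_L h)\,\partial_x L$. Substituting $h=X^d\th$ and the formulas above and factoring out $X^{d-2}$ gives $\partial_x f=X^{d-2}\,\th_1(X,L,U)$, where $\th_1$ is again analytic at the origin and polynomial in $L$ --- here one uses that $1/\rho(u)$ is analytic in $U$ and that $\partial_X$ and $\partial_L$ preserve analyticity and polynomial dependence on $L$. Hence $\partial_x f$ is an expansion-series with the same singularity function $\rho(u)$. To identify the order, inspect the leading monomial $X^aL^b$ of $f$: if $a\neq0$ then $\partial_x(X^aL^b)$ has leading term $-\tfrac{a}{2\rho(u)}X^{a-2}L^b$, so the type passes from $(a,b)$ to $(a-2,b)$; if $a=0$ --- which forces $\alpha=0$, since $(0,0)\notin D$ and $f$ has a well-defined order --- then $\partial_x L=-\tfrac{1}{\rho(u)}X^{-2}$, so the type passes to $(-2,0)$. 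One checks that no lower monomial of $f$ can produce something above this new leading term, and that in both cases the resulting order is $\alpha-1$. If the expansion of $f$ is strong, $f$ continues analytically to the slit polydisc $\Omega$, hence so does $\partial_x f$, and its expansion is strong as well.

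For $\int_0^x f(s,u)\,\mathrm{d}s$, I would run the same computation backwards to build a local primitive. Each monomial occurring in $h$ has an explicit $x$-antiderivative that is again an expansion-series: if $a\neq-2$ one takes a $U$-analytic multiple of $X^{a+2}L^b$, up to lower-order-in-$L$ corrections removed by a downward recursion on $b$, while the exceptional monomials $X^{-2}L^b$ integrate to a multiple of $\rho(u)\,L^{b+1}$. Summing these produces an expansion-series $H(X,L,U)$ on a punctured neighbourhood of $(x_0,u_0)$ with $\partial_x H=f$, whose leading monomial has $X$-degree raised by $2$ (or, in the order-$0$ case, $L$ becomes $X^2L$ of order $1$), so $H$ has order $\alpha+1$. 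Since $\partial_x\!\big(\int_0^x f(s,u)\,\mathrm{d}s-H(x,u)\big)=0$ on this connected domain, the difference depends on $u$ only; evaluating it at a fixed non-real $x$ near $x_0$ shows it equals some $\phi(u)$ analytic at $u_0$, so $\int_0^x f(s,u)\,\mathrm{d}s=H(x,u)+\phi(u)$, still an expansion-series of order $\alpha+1$ --- adding an analytic function of $u$ only alters the $X^0L^0$ coefficient, which lies outside $D$ and so does not affect the type. In the strong case $\Omega$ is simply connected, hence the primitive $\int_0^x f(s,u)\,\mathrm{d}s$ is itself well-defined and analytic on $\Omega$, giving a strong expansion.

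The one point that needs genuine care is the exceptional monomial $L$ (order $0$): it is precisely where differentiation lands on the pole $X^{-2}$ and where integration creates a new top power of $L$, so the otherwise uniform rule ``shift the $X$-exponent of the leading monomial by $\mp2$'' must be supplemented by this separate check (and by checking that sub-leading terms never overtake it). Everything else --- the chain rule, the analyticity of $1/\rho(u)$ and of $\phi(u)$, and the stability of the class of expansion-series under $\partial_X$, $\partial_L$, multiplication by $1/\rho(u)$, and termwise integration --- is routine.
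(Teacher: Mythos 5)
Your proof is correct and follows essentially the same approach as the paper: both compute $\partial_x X$ and $\partial_x L$, track the dominating monomial of $\partial_x(X^aL^b)$ (with the same exceptional case at $a=0$, i.e.\ order $0$), and handle integration by reversing the differentiation formulas. The paper merely states the key monomial computations and defers the remaining bookkeeping to \cite{DrGiNo08}, whereas you supply the details (chain rule, stability of the class of expansion-series, the analytic integration constant $\phi(u)$, and simple connectedness of $\Omega$ in the strong case), so the two arguments are the same in substance.
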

\begin{proof}
We use again the notations $X=\sqrt{1-x/\rho(u)}$,
$L=\ln(1-x/\rho(u))$. The main point is that, for 
$a\in 2\mathbb{Z}_{>0}$, $\partial_x X^{a}L$ has
dominating monomial $X^{a-2}L$, $\partial_xL$ 
 has dominating monomial $X^{-2}$, and for $a\in\mathbb{Z}\backslash 2\mathbb{Z}_{\geq 0}$,   
$\partial_x X^a$ has dominating
monomial $X^{a-2}$.  We refer the reader to~\cite{DrGiNo08} for a
detailed proof in the case of a square-root singular expansion,
the arguments of which can be extended directly to any order.
\end{proof}

 \begin{lemma}[Critical composition schemes]\label{lem:comp}
Let $M(x,u)$, $H(x,u)$, $C(t,u)$ be non-zero series with
non-negative coefficients linked by the equation
 $$
 M(x,u)=C(H(x,u),u).
 $$
Assume that $H(x,u)$ has a log-free strong singular expansion of order
$3/2$ around $(x_0,u_0)$, and let $\rho(u)$ be the singularity
function of $H(x,u)$. Assume also that  $C(t,u)$ has a strong singular
expansion of order $\alpha \leq 0$ at $(t_0,u_0)$, with
$t_0=H(x_0,u_0)$, and the singularity function $R(u)$ of $C(t,u)$
coincides with $H(\rho(u),u)$ (this means that the composition is
\emph{critical}).

Then $M(x,u)$ has a strong singular expansion of order $\alpha$
around $(x_0,u_0)$ with singularity function $\rho(u)$.
 \end{lemma}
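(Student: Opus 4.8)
The plan is to substitute the singular expansion of $H$ into that of $C$ and track how the critical nature of the composition makes the leading behaviour of $M$ inherit the order $\alpha$ of $C$. First I would set $Y=\sqrt{1-t/R(u)}$ as the local uniformizer for $C$ at $(t_0,u_0)$, so that near $(t_0,u_0)$ we may write $C(t,u)=Y^{2\alpha}\,\widetilde C(Y,U)$ (up to a $\log$-free, analytic-at-origin factor), using $\alpha\le 0$ and the definition of a singular expansion of order $\alpha$. On the other side, the hypothesis that $H$ has a log-free strong singular expansion of order $3/2$ means $H(x,u)=t_0(U) - a(U)\,X^3 + (\text{lower order in } X)$, where $X=\sqrt{1-x/\rho(u)}$, $t_0(U)=H(\rho(u),u)=R(u)$ by criticality, and $a(U)$ is analytic with $a(0)\ne 0$ (because the order is exactly $3/2$, the $X^3$ coefficient is nonzero). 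The key computation is then
\[
1-\frac{H(x,u)}{R(u)}=\frac{a(U)}{R(u)}\,X^3\bigl(1+O(X)\bigr),
\]
so that $Y=\sqrt{1-H/R}$ is, up to a unit analytic in $X$ and $U$ (and vanishing nowhere near the origin), equal to $X^{3/2}$ times such a unit. Substituting, $Y^{2\alpha}=X^{3\alpha}\cdot(\text{analytic unit})$, hence $M(x,u)=C(H(x,u),u)=X^{3\alpha}\,\widetilde M(X,U)$ for some $\widetilde M$ analytic at the origin with nonzero constant term, which is exactly a singular expansion of order $3\alpha/2$...

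Wait — I should double-check the bookkeeping: the order of $C$ as a function of $t$ is $\alpha$, meaning the exponent of $Y$ is $2\alpha$; substituting $Y\sim X^{3/2}$ gives exponent $3\alpha$ in $X$, i.e.\ order $3\alpha/2$ in $x$. But the claimed conclusion is order $\alpha$. So in fact the statement must intend the orders to match directly, which forces me to revisit the normalization: the correct reading (consistent with the composition being used later for maps, where $H$ is an Euler-type substitution) is that one measures the order of $C$ in the variable $t$ via $1-t/R(u)$, but the relevant ``distance to the singularity'' for $M$ is $1-x/\rho(u)$, and these are related by $1-H/R \sim c\,(1-x/\rho)^{?}$; if $H$ has a \emph{square-root} expansion the exponent is $1$ and orders match, whereas here the order-$3/2$ hypothesis means $1-H/R$ has a simple zero in $1-x/\rho$ of order... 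Let me restate cleanly: with $H$ of order $3/2$, $H(x,u)=R(u)-a(U)\,\rho(u)^{-3/2}(1-x/\rho(u))^{3/2}+\cdots$, hence $1-H/R \sim c(U)\,(1-x/\rho(u))^{3/2}$, i.e.\ $Y^2 \sim c\,X^3$, so $Y \sim c' X^{3/2}$ and $Y^{2\alpha}\sim c''\,X^{3\alpha}$. For the final order to be $\alpha$ one would need $X^{3\alpha}=X^{2\alpha}$, false unless $\alpha=0$.

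Given this tension, the honest plan I would actually carry out is: (i) prove the substitution lemma at the level of formal manipulation of expansion-series — show that plugging an order-$3/2$ log-free expansion into an order-$\alpha$ ($\alpha\le 0$) expansion yields again an expansion-series, with the stated order, by the $Y\sim X^{3/2}$ computation above and verification that the polynomial-in-$L$ structure and the $X^d\widetilde{(\cdot)}$ form are preserved (here $\alpha\le 0$ is used so that no positive power of $Y$, which could a priori introduce non-integer or problematic $L$ behaviour, appears — negative or zero powers of $Y$ compose cleanly); (ii) upgrade this to an \emph{analytic} (not merely formal) statement on the domain $\Omega$, using that both expansions are \emph{strong}: $H$ is analytically continuable past its singularity on a slit neighbourhood, $C$ likewise, and criticality $R(u)=H(\rho(u),u)$ guarantees that the image of the slit $x$-neighbourhood under $H$ lands in the slit $t$-neighbourhood where $C$'s continuation is valid, so $C\circ H$ continues — this is the analytic-continuation step and is where one must be careful that $H$ maps the relevant Pac-Man domain into the corresponding one for $C$; (iii) conclude by Lemma~\ref{lem:exch} that the leading variable need not be specified.

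The main obstacle, as the above shows, is getting the order bookkeeping exactly right — reconciling the exponent produced by $Y\sim X^{3/2}$ with the order claimed in the statement — which likely hinges on a normalization convention in the ambient paper (e.g.\ whether $C(t,u)$ here already denotes a series whose natural variable is scaled so that the composition with the order-$3/2$ map rescales the order back to $\alpha$, as happens for the canonical quadrangulation$/$map substitution $M=C(H)$ with $H$ of order $3/2$). I would pin this down first; once the convention is fixed the rest is the routine but slightly delicate business of checking that ``expansion-series'' is closed under the relevant substitution and that strongness (the slit-neighbourhood analytic continuation) transfers through a critical composition.
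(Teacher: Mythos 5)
Your calculation of the order is wrong, and this is a genuine error rather than a missing convention. You write $H(x,u)=t_0(U)-a(U)X^3+\cdots$ and conclude $1-H/R\sim cX^3$, but this mistakes what ``log-free singular expansion of order $3/2$'' means in this paper. The type of a singular expansion is the largest pair $(a,b)$ in $D=\mathbb{Z}^2\setminus\{(2a,0)\}$ with nonzero coefficient; the even powers $X^0, X^2, X^4,\ldots$ (the analytic, polynomial-in-$Z$ part) are \emph{excluded} from $D$ and therefore do not affect the type. So order $3/2$ only forces the first \emph{singular} monomial to be $X^3$; it says nothing about the $X^2=Z$ coefficient, which in fact is nonzero here. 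Indeed $[Z^1]H$ (as a function of $U$) equals $-\rho(u)\,\partial_x H(\rho(u),u)$, and since $H$ has nonnegative coefficients and genuinely depends on $x$, this derivative converges (by Lemma~\ref{lem:integrate}, because the order is $3/2>1$) to a strictly positive value. Thus with $T=1-H/R$ and criticality $H(\rho(u),u)=R(u)$ killing the constant term, the expansion reads $T=Z\cdot\lambda(X,U)$ with $\lambda(0,0)\neq 0$, so $\bX=\sqrt T=X\cdot\mu(X,U)$ with $\mu$ an analytic unit. Substituting into the expansion-series for $C$ then rescales $\bX^{2\alpha}$ into $X^{2\alpha}\cdot(\text{unit})$, giving order $\alpha$ exactly as claimed. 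Your $X^{3/2}$ scaling would only occur if the $X^2$ coefficient of $H$ vanished, which never happens for a genuine generating function.

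You in fact noticed the discrepancy (``false unless $\alpha=0$'') but then hypothesized a hidden normalization convention instead of revisiting the definition of order; there is no such convention, and the resolution is purely the point above. Your steps (i)--(iii) outline the right overall architecture (formal substitution into expansion-series, then analytic continuation using strongness and criticality, then Lemma~\ref{lem:exch}), and the strongness argument you sketch is in the spirit of the paper's, which shows $|H(x,u)|<R(u)$ on the extended domain using nonnegativity of coefficients and then deduces analyticity of $C(H(x,u),u)$ there. One further small point worth noting about $\bL=\ln T$: one gets $\bL=L+\ln\lambda(X,U)$, i.e.\ $L$ plus an analytic function, and since expansion-series are polynomials in $L$ this addition still produces an expansion-series after rearrangement; the log-free hypothesis on $H$ is what guarantees no extraneous logarithms arise from the inner series.
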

 \begin{proof}
Since $C(t,u)$ has a singular expansion of order $\alpha$ around
$(t_0,u_0)$, the series $M(x,u)=C(H(x,u),u)$ has an expansion of
the form
 $$
M(x,u)\!=\!\bh(\bX,\bL,U),\quad  with\ t\!=\!H(x,u),\
T\!=\!1-\frac{t}{R(u)},\ \bX\!=\!\sqrt{T},\  \bL\!=\!\ln(T),\
U\!=\!u-u_0,
 $$
with $\bh$ an expansion-series of type $(2\alpha,0)$ if
$\alpha\in\tfrac1{2}\mathbb{Z}\backslash\mathbb{Z}_{\geq 0}$, or
of type $(2\alpha,1)$ if $\alpha\in\mathbb{Z}_{\geq 0}$. Moreover,
since $H(x,u)$ has a log-free singular expansion of order $3/2$,
we have
 $$
 T=Z\cdot\lambda(X,U),\ \mathrm{with}\ Z=1-\frac{x}{\rho(u)},\ X=\sqrt{Z},
 $$
where $\lambda(X,U)$ is analytic and non-zero in a neighbourhood
of $(0,0)$, which implies that $\bX=X\cdot\mu(X,U)$ and $\bL=L\cdot \nu(X,U)$
with $\mu(X,U)$ and $\nu(X,U)$ analytic and non-zero in a neighbourhood of $(0,0)$. 
Hence, by easy rearrangements, $M(x,u)$ can be written
as
$$
 M(x,u)\!=\!\ h(X,L,U),\ \mathrm{with}\ X=\sqrt{Z},\ L=\ln(Z),\ U\!=\!u-u_0,
 $$
 where $h(X,L,U)$ is an expansion-series of the same type as $\bh(X,L,U)$.

 \cacher{
Since $C(t,u)$ has a singular expansion of order $\alpha$ around
$(t_0,u_0)$, the series $M(x,u)=C(H(x,u),u)$ has an expansion of
the form
 $$
 M(x,u)\!=\!g(S,U)\ln(T)+h(S,U),\ \mathrm{with}\
 t\!=\!H(x,u),\ T\!=\!1-\frac{t}{R(u)},\ S\!=\!\sqrt{T},\ U\!=\!u-u_0,
 $$
where $g$ and $h$ meet the conditions of a singular expansion of
order $\alpha$, as listed in the definition given above. Moreover,
since $H(x,u)$ has a singular expansion of order $3/2$ with no
logarithmic term, one has
 $$
 T=Z\cdot\lambda(X,U),\ \mathrm{with}\ Z=1-\frac{x}{\rho(u)},\ X=\sqrt{Z},
 $$
and where $\lambda(X,U)$ is analytic and non-zero in a
neighbourhood of $(0,0)$. Hence, by easy calculations, $M(x,u)$
can be written as
$$
M(x,u)=\tg(X,U)\ln(Z)+\th(X,U),
$$
where $\tg$ and $\th$ meet the same conditions as $g$ and $h$,
respectively. Thus $M(x,u)$ has a singular expansion around
$(x_0,u_0)$ of the same order as the expansion of~$C(t,u)$. }

It remains to prove that the singular expansion of $M(x,u)$ is
strong. For $z\in\mathbb{C}$ and $\epsilon>0$, denote by
$\cB(z,\epsilon)$ the open ball of center $z$ and radius
$\epsilon$. Since $M(x,u)$ has a singular expansion around
$(x_0,u_0)$ and has nonnegative coefficients, 
$M(x,u)$ is already analytically continuable to a
domain of the form $D_1\cup D_2$, with
$$
D_1=\{(x,u)\in\mathbb{C}^2\,:\, |x|<x_0,\ |u|<u_0\},$$ and
$$
D_2:\{(x,u)\in\mathbb{C}^2 \,:\, x\in \cB(x_0,\epsilon),\ u\in
\cB(u_0,\epsilon),\ x-\rho(u)\notin \mathbb{R}_+\},
$$
for some $\epsilon>0$.
So we just have to show that $M(x,u)$ is analytically continuable to the domain
$$
D_3=\{(x,u)\in\mathbb{C}^2\,:\, x_0\leq |x|<x_0+\delta,\ u_0\leq
|u|<u_0+\delta\}\backslash (\cB(x_0,\epsilon)\times
\cB(u_0,\epsilon)),
$$
for some $\delta>0$ to be adjusted. Note that, for $\delta$
sufficiently small (compared to $\epsilon$), there exists $\phi>0$
such that, for $(x,u)\in D_3$, the arguments of $x$ and $u$ are in
$(\phi,2\pi-\phi)$, that is, $x$ and $u$ are \emph{away} from the
positive real axis. Since $H(x,u)$ has nonnegative coefficients,
for $\delta$ small enough, we have $|H(x,u)|<t_0-\eta$ for some
$\eta>0$, where $t_0=H(x_0,u_0)$). Since $C(t,u)$ has non-negative
coefficients, its singularity function $R(u)$ satisfies $R(u)\geq
R(u_0+\delta)$ for $|u|\leq u_0+\delta$. Reducing again $\delta$
so that $R(u_0+\delta)>t_0-\eta/2$, we conclude that
$|H(x,u)|<R(u)$ for $(x,u)\in D_3$. Hence $C(t,u)$ is analytic at
$(H(x,u),u)$, and so $M(x,u)$ is analytic at any $(x,u)\in D_3$.
\end{proof}

We need some further definitions. Given two series $f_1$ and $f_2$
in the same number of variables, write $f_1\preceq f_2$ if
$f_2-f_1$ has non-negative coefficients. A bivariate series
$f(x,u)$ is said to be of  \emph{approximate-singular order}
$\alpha$ at 
 $(x_0,u_0)$ if there are two series $D(x,u)$ and $E(x,u)$ such that
$$
D(x,u)-E(x,u)\preceq f(x,u)\preceq D(x,u),
$$
where $D(x,u)$ has a singular expansion of order $\alpha$ and
$E(x,u)$ has a singular expansion of order $\beta>\alpha$ around
$(x_0,u_0)$, and $D(x,u)$ and $E(x,u)$ have the same singularity
function. This common singularity function is called the
\emph{singularity function} of $f(x,u)$. The function $f(x,u)$ is said to be
of \emph{strong} approximate-singular order $\alpha$ if the singular
expansions of $D(x,u)$ and $E(x,u)$ are strong. 

Since the singular order of $D(x,u)$ dominates the singular order
of $E(x,u)$, the asymptotic estimate of the coefficients of
$f(x,u)$ is dictated by the estimate of $D(x,u)$, and we have the
following basic result.

\begin{corollary}\label{coro:transfer}
Let $f(x,u)$ be a series of strong approximate-singular order
$\alpha\in\tfrac1{2}\mathbb{Z}$ around each singular point of
$f(x,u)$, and let  $\rho(u)$ be the singularity function of
$f(x,u)$.  Then $f_n=[x^n]f(x,1)$ satisfies
$$
f_n\sim c\ \!n^{-\alpha-1}\gamma^n,
$$
where $c>0$ is a constant and $\gamma=1/\rho(1)$.

Assume that $\mu(u)=-u\rho'(u)/\rho(u)$ is strictly increasing,
and define $a=\mu(0^+)$, $b=\lim_{u\to\infty} \mu(u)$. For each
$\mu\in(a,b)$, let $u_0$ be the positive value such that
$\mu(u_0)=\mu$, and let $\gamma(\mu)=1/\rho(u_0)$. Then there
exists a constant $c(\mu)>0$ such that
  $$
  f_{n,\lfloor \mu n \rfloor}\sim c(\mu)\ \!n^{-\alpha-3/2}\gamma(\mu)^n\ \ \mathrm{when}\ n\to\infty.
  $$
\end{corollary}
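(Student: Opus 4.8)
The plan is to deduce both estimates from the Transfer theorem (Theorem~\ref{theo:transfer}) and the Local limit theorem, applied to the dominating series $D(x,u)$, combined with a squeeze: the sandwich $D-E\preceq f\preceq D$ from the definition of approximate-singular order, together with the fact that $E$ has \emph{strictly larger} singular order $\beta>\alpha$, makes the contribution of $E$ negligible at the level of coefficient asymptotics.

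For the univariate statement, I would specialize to $u=1$. Since $f\succeq 0$ and $f\preceq D$, the series $D=(D-f)+f$ has nonnegative coefficients and radius of convergence $\rho(1)$; as it admits a strong singular expansion of order $\alpha$ there, Theorem~\ref{theo:transfer} gives $[x^n]D(x,1)\sim c\,\rho(1)^{-n}n^{-\alpha-1}$ with $c>0$ (positivity being forced by $D\succeq 0$). The series $E(x,1)$ admits a strong singular expansion of order $\beta>\alpha$, hence continues analytically to a slit disc around $\rho(1)$, and the transfer machinery of~\cite{FlaSe} yields $[x^n]E(x,1)=O(\rho(1)^{-n}n^{-\beta-1})=o(\rho(1)^{-n}n^{-\alpha-1})$; the same applies to $D-E$, whose singular expansion has the same dominating monomial as that of $D$. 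From $[x^n](D-E)(x,1)\le f_n\le[x^n]D(x,1)$, and the fact that both sides are $\sim c\,\rho(1)^{-n}n^{-\alpha-1}$, the squeeze gives $f_n\sim c\,n^{-\alpha-1}\gamma^n$ with $\gamma=1/\rho(1)$.

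For the bivariate statement I would run the identical argument with the Local limit theorem in place of Theorem~\ref{theo:transfer}. Its hypotheses on $\mu(u)=-u\rho'(u)/\rho(u)$ are assumed in the corollary, and since $D$, $E$, $D-E$ and $f$ all share the singularity function $\rho(u)$, they share the same $\mu(u)$, the same $a$, $b$, the same $u_0$ for a given $\mu\in(a,b)$, and the same $\gamma(\mu)=1/\rho(u_0)$. Applying the Local limit theorem to $D$ gives $[x^nu^m]D\sim c(\mu)\,n^{-\alpha-3/2}\gamma(\mu)^n$ with $c(\mu)>0$ and $m=\lfloor\mu n\rfloor$, while for $E$ (and for $D-E$) the same machinery controls the coefficients as $o(n^{-\alpha-3/2}\gamma(\mu)^n)$ (resp. as $\sim c(\mu)\,n^{-\alpha-3/2}\gamma(\mu)^n$); the sandwich then yields $f_{n,\lfloor\mu n\rfloor}\sim c(\mu)\,n^{-\alpha-3/2}\gamma(\mu)^n$.

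The one point needing care --- and the reason the \emph{strong} part of the hypothesis is built into the definition of approximate-singular order --- is that $E$, and hence $D-E$, need not have nonnegative coefficients, so Theorem~\ref{theo:transfer} cannot be invoked for them verbatim; instead one uses that a strong singular expansion forces analytic continuation to a common slit neighbourhood of the singularity, which is precisely what is needed to estimate $[x^n]$ and $[x^nu^m]$ of such series. Everything else is a routine squeeze, with $c$ and $c(\mu)$ being exactly the dominating coefficients furnished by the two theorems for $D$.
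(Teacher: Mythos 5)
Your proof is correct and fills in precisely the squeeze argument that the paper treats as immediate (the sentence preceding the corollary). One small simplification worth noting: the concern in your final paragraph is actually unnecessary, since $E=(f-(D-E))+(D-f)$ is a sum of two coefficientwise-nonnegative series, so $E$ itself has nonnegative coefficients and Theorem~\ref{theo:transfer} (resp.\ the local limit theorem) applies to $E$ directly, delivering the needed $o$-bound without invoking the more general analytic-continuation form of the transfer machinery.
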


We also have a corollary regarding approximate-singular orders of
critical compositions. The following result follows directly from
Lemma~\ref{lem:comp}, but notice that the statement is not the
same, since we are now dealing with approximate-singular orders
(which is precisely  what is needed later on).

\begin{corollary}\label{coro:comp}
Let $M(x,u)$, $H(x,u)$, $C(t,u)$ be non-zero series with
non-negative coefficients linked by the equation
 $$
 M(x,u)=C(H(x,u),u).
 $$
Assume that $H(x,u)$ has a log-free strong singular expansion of order
$3/2$  around $(x_0,u_0)$ ---call $\rho(u)$ the singularity
function of $H(x,u)$---, $C(t,u)$ is of strong approximate-singular order
$\alpha$ at $(t_0,u_0)$, with $t_0=H(x_0,u_0)$, and the
singularity function $R(u)$ of $C(t,u)$ coincides with
$H(\rho(u),u)$ (the composition is \emph{critical}).

Then $M(x,u)$ is of strong approximate-singular order $\alpha$
around $(x_0,u_0)$ with singularity function~$\rho(u)$.
 \end{corollary}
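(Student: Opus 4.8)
The plan is to transport the coefficientwise bracketing that witnesses the approximate-singular order of $C$ through the substitution $t\mapsto H(x,u)$, and then apply Lemma~\ref{lem:comp} to each of the two resulting bracketing series. Concretely: since $C(t,u)$ is of strong approximate-singular order $\alpha$ at $(t_0,u_0)$, by definition there are series $D(t,u)$ and $E(t,u)$ with $D(t,u)-E(t,u)\preceq C(t,u)\preceq D(t,u)$, where $D$ admits a strong singular expansion of order $\alpha$ and $E$ a strong singular expansion of some order $\beta>\alpha$, both having the same singularity function, which by the criticality hypothesis equals $R(u)=H(\rho(u),u)$ (and $t_0=H(x_0,u_0)$).

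The key — and elementary — observation is that substituting a power series with non-negative coefficients and zero constant term for $t$ is a well-defined, additive and order-preserving operation on formal power series. Since $H(x,u)$ has non-negative coefficients and $H(0,0)=0$, for any $P(t,u)\succeq 0$ the series $P(H(x,u),u)$ has non-negative coefficients, and substitution is additive, $(P_1-P_2)(H(x,u),u)=P_1(H(x,u),u)-P_2(H(x,u),u)$. Writing $D'(x,u):=D(H(x,u),u)$ and $E'(x,u):=E(H(x,u),u)$ and applying this to the bracketing of $C$ above gives $D'(x,u)-E'(x,u)\preceq M(x,u)\preceq D'(x,u)$.

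Now Lemma~\ref{lem:comp} applies to $D'=D(H(x,u),u)$: $H$ has a log-free strong singular expansion of order $3/2$ with singularity function $\rho(u)$, $D$ has a strong singular expansion of order $\alpha$ at $(t_0,u_0)=(H(x_0,u_0),u_0)$, and the composition is critical because the singularity function $R(u)$ of $D$ equals $H(\rho(u),u)$; hence $D'$ has a strong singular expansion of order $\alpha$ with singularity function $\rho(u)$. The same reasoning applied to $E$ (whose singularity function is also $R(u)$, so the composition defining $E'$ is critical as well) shows that $E'$ has a strong singular expansion of order $\beta>\alpha$ with singularity function $\rho(u)$: if $\beta\le 0$ this is Lemma~\ref{lem:comp} verbatim, and if $\beta>0$ the computation in its proof — rewriting $T=Z\cdot\lambda(X,U)$ with $\lambda$ analytic and non-vanishing at the origin, so that $\bX=X\cdot\mu(X,U)$ and $\bL=L\cdot\nu(X,U)$, and reassembling an expansion series of the same type — goes through unchanged. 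Thus $D'$ and $E'$ bracket $M$, share the singularity function $\rho(u)$, and are of strong singular orders $\alpha$ and $\beta>\alpha$ respectively; by the definition of (strong) approximate-singular order, $M(x,u)$ is of strong approximate-singular order $\alpha$ around $(x_0,u_0)$ with singularity function $\rho(u)$.

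The only step that asks for a little care — and the one I would verify first — is the transport of the bracketing in the second paragraph: one must check that substituting $t\mapsto H(x,u)$ into the defining inequalities of $C$ is a legitimate operation on formal power series and that it preserves $\preceq$, which is exactly where $H(0,0)=0$ and the non-negativity of the coefficients of $H$ enter. Everything after that is a direct invocation of Lemma~\ref{lem:comp}, the only (routine) addition being the remark that the lemma's argument also covers the lower-order error term $E'$ when its order $\beta$ happens to be non-negative.
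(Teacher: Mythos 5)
Your proof is correct and follows exactly the route the paper intends: the paper states that Corollary~\ref{coro:comp} ``follows directly from Lemma~\ref{lem:comp}'' without elaboration, and your argument --- transporting the bracketing $D-E\preceq C\preceq D$ through $t\mapsto H(x,u)$ and applying Lemma~\ref{lem:comp} to each of $D'$ and $E'$ --- is precisely the right way to unpack that claim, including your observation that the proof of Lemma~\ref{lem:comp} extends to the case $\beta>0$ even though its statement restricts to order $\le 0$. One detail worth making explicit (so that the non-negativity hypothesis of Lemma~\ref{lem:comp} is verified for $D'$ and $E'$): since $C\succeq 0$, the inequalities $C\preceq D$ and $E\succeq D-C$ force both $D$ and $E$ to have non-negative coefficients as well.
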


\section{Maps of genus $g$ via quadrangulations}\label{sec:maps}

We recall that a map is a graph $G$ embedded in a surface $S$ in
such a way that the faces are homeomorphic to topological disks.
If $S=\mathbb{S}_g$ we say that the map is of genus $g$. We have
a similar definition for the non-orientable surface $
\mathbb{N}_h$ of genus $h$. Loops and multiple edges are allowed.
A map is \emph{rooted} if it has a marked edge that is given a
direction. If $g=0$ (planar maps), the face on the right of the root is called
the \emph{outer face}, and the other faces are called \emph{inner faces}. 
Vertices and edges incident to the outer face are said to be \emph{outer} vertices (edges, resp.),
the other ones are called \emph{inner} vertices (edges, resp.). 

In this section we obtain the asymptotic enumeration of a family
of maps $\cS_g$ that is closely related to 3-connected graphs of
genus $g$. To enumerate $\cS_g$ we follow the scheme of Bender et
al.~\cite{BeGaRiWo96}, and use a classical correspondence between
maps and quadrangulations (maps in which every face has degree
four). We decompose quadrangulations successively at contractible
2-cycles and then at contractible 4-cycles. Along this way we
encounter near-simple and near-irreducible quadrangulations.

However, in our treatment, quadrangulations are labelled at edges
and are allowed to be unrooted (classically, in map enumeration
maps are unlabelled and rooted). Since each rooted quadrangulation
with $2n$ edges corresponds to exactly $(2n-1)!$ labelled
quadrangulations, the ordinary generating function of rooted
objects is given by the derivative of the exponential generating
function of the labelled ones. We believe this approach yields
simplified decompositions and equations. Note that the
corresponding generating functions have to be exponential to take
into account the labelling of the edges. Subsections
\ref{subsec:maps} and \ref{subsec:quadrang} contain the basic
concepts and definitions. In Subsection \ref{subsec:quad-singular}
we obtain singular expansions that are essential for the
enumeration of graphs in Section~\ref{sec:graphs}.

\subsection{Maps and quadrangulations, face-width and
edge-width}\label{subsec:maps}
A map is \emph{bipartite} if there
is a partition of the vertices into black and white vertices such
that each edge connects a black vertex with a white vertex.
Whenever we work with bipartite maps we consider the partition as
given. A quadrangulation is a map such that all faces have degree
four. In this article we consider only bipartite quadrangulations.
(Note that in positive genus not all quadrangulations are
bipartite;  for instance, a toroidal grid $C_p \times C_q$ is a
quadrangulation but it has odd cycles if either $p$ or $q$ are
odd.) Here, by a quadrangulation we always mean a
\emph{bipartite} quadrangulation.

We recall here a well-known correspondence, for any fixed genus
$g\geq 0$, between maps and quadrangulations. Given a map $M$,
whose vertices are depicted as black, consider the following
operations:
\begin{itemize}
\item
insert a white vertex in each face of $M$,
\item
for each corner $\theta$ of $M$, with $v$ the incident vertex and
$f$ the incident face of $\theta$, insert an edge that connects
$v$ to the white vertex corresponding to $f$. Such an edge is
called a \emph{corner-edge}.
\end{itemize}
The embedded graph $Q$ consisting of all vertices (black and
white) and of all corner-edges is a (bipartite) quadrangulation,
which is called the \emph{quadrangulation} of $M$. A correspondence
between the parameters of $M$ and the parameters of $Q$ is listed
in the following table:

\begin{center}
\begin{tabular}{|r|l|}
\hline
$M$ & $Q$ \\
\hline
\hline
corner & edge\\
edge& face\\
\phantom{black} vertex& black vertex\\
face& white vertex\\
\hline
\end{tabular}
\end{center}

Let us now define the face-width and the edge-width of a map $M$
in a surface $S$ of genus $g\geq 0$. The \emph{face-width}
$\fw(M)$ of $M$ is the minimum number of intersections of $M$ with
a simple non-contractible curve $C$ on $S$. It is easy to see that
this minimum is achieved when $C$ meets $M$ only at vertices. The
\emph{edge-width} $\ew(M)$ of $M$ is the minimum length of a
non-contractible cycle of $M$. (For genus $0$, the edge-width
and the face-width are defined to be $+\infty$ since every curve
is contractible.) Let $Q$ be the quadrangulation of $M$. Since a closed curve
such that $|C\cap M|=\ell$ yields a non-contractible cycle of
$Q$ of length $2\ell$ and vice versa, we have~(see
\cite{Mohar})
\begin{equation}\label{eq:fwew}
\ew(Q)=2\ \!\fw(M).
\end{equation}

\subsection{Families of quadrangulations and links between them}\label{subsec:quadrang}
Again, all quadrangulations are assumed to be bipartite. In
addition, a quadrangulation is assumed to have its $2m$ edges
labelled $\{1,2, \dots, 2m\}$. This is not the standard procedure
with maps but, as explained earlier, in our context it is easier
to work with unrooted maps, and in this case we must label the
edges in order to avoid caring about automorphisms.

A quadrangulation is called \emph{near-simple} if it has no
contractible 2-cycle, and is called \emph{simple} if it has no
2-cycle at all, that is, no multiple edges.
A quadrangulation is called \emph{near-irreducible} if it is
simple and every contractible 4-cycle delimits a face, and is
called \emph{irreducible} if it is simple and every 4-cycle delimits a
face. Note that a near-simple quadrangulation $Q$ is simple if and
only if $\ew(Q)>2$, and a near-irreducible quadrangulation $Q$ is
irreducible if and only if $\ew(Q)>4$. For $g\geq 0$, the families
of (bipartite) quadrangulations, near-simple quadrangulations, and
near-irreducible quadrangulations of genus $g$ are denoted,
respectively, by $\cQg$, $\cRg$ and $\cSg$. If  $\cF$ is a family
of  quadrangulations with labelled edges, let $\cF[n,m]$ be the
set of elements in $\cF$ with $n$ black vertices and $m$ faces,
and let
$$
F(x,u)=\sum_{n,m}\bigl|\cF[n,m]\bigr|\,x^n\frac{u^{m}}{(2m)!}.
$$
In other words, $F(x,u)$ enumerates the family $\cF$ according to
black vertices and faces, and the factor $(2m)!$ in the
denominator takes account of the fact that the $2m$ edges are
labelled. We also define $F'(x,u)$ as the series $\partial_u
F(x,u)$, that is, differentiation  with respect to the second
variable.

Let us recall how one extracts, for $g>0$, a near-simple core
$R\in\cRg$ from $Q\in\cQg$, and a near-irreducible core $S\in\cSg$
from $R\in\cRg$, as described in~\cite{BeGaRiWo96}. The
contractible 2-cycles of $Q\in\cQg$ are ordered by inclusion, that
is,  given two 2-cycles $c$ and $c'$, one has $c\leq c'$ if the
disc inside $c$ is included in the disc inside $c'$. The
\emph{near-simple core} $R$ of $Q$ is obtained by collapsing all
maximal contractible 2-cycles into edges. Conversely, every
$Q\in\cQg$ is obtained from some $R\in\cRg$, where each edge is
possibly ``opened'' into a 2-cycle that is to be filled with a (rooted) 
planar quadrangulation. Therefore, for $g>0$ we have
\begin{equation}\label{eq:QR}
\Qg(x,u)=\Rg(x,V(x,u)),\ \ \ \mathrm{where}\
V(x,u)=u(1+\tfrac{2u}{x}Q_0\ \!\!\!'(x,u))^2.
\end{equation}
Similarly, the contractible 4-cycles of $R\in\cRg$ are partially
ordered according to the analogous inclusion relation. The
\emph{near-irreducible core} $S$ of $R\in\cR_g$ is obtained by
emptying the disc within each maximal contractible 4-cycle, that
is, the disc inside a maximal 4-cycle is replaced by a face.
Conversely, every $R\in\cRg$ is obtained from some $S\in\cSg$,
where one might insert in each face a planar simple
quadrangulation (the ``degenerate'' simple planar quadrangulation
with only 2 edges is not allowed for insertion). Therefore, for
$g>0$ we have
\begin{equation}\label{eq:RS}
\Rg(x,v)=\Sg(x,W(x,v)),\ \ \ \mathrm{where}\ W(x,v)=\frac1{x^2v}(2vR_0\ \!\!\!'(x,v)-xv-x^2v).
\end{equation}

\subsection{Singular expansions}\label{subsec:quad-singular}

Throughout this section, we associate to genus $g$ the value
\begin{equation}
\alpha=\tfrac{5}{2}(1-g).
\end{equation}

\paragraph{Rooted maps.}
For $g\geq 0$, let $\Mgr(x,u)$ be the series counting rooted maps
of genus $g$ where $x$ marks vertices and $u$ marks edges.
According to the correspondence between maps and quadrangulations,
we have
\begin{equation}
2u\Qg'(x,u)=\Mgr(x,u).
\end{equation}
Let $p=p(x,u)$ and $q=q(x,u)$ be the algebraic bivariate series
defined by the system
\begin{equation}\label{eq:systpq}
\left\{\begin{array}{rcl}
p&=&xu+2pq+p^2,   \\
q&=&u+2pq+q^2,
\end{array}\right.
\end{equation}
and let $\Delta=\Delta(p,q)=(1-2p-2q)^2-4pq$ be the Jacobian
determinant of the system. Singular points of $p(x,u)$ and $q(x,u)$
are the same: for $x_0>0$ the corresponding singular point $(x_0,u_0)$
is such that $u_0$ is the smallest positive $u$ satisfying $\Delta(p,q)=0$
where $p=p(x_0,u)$ and $q=q(x_0,u)$.  Such a pair $(x_0,u_0)$
is called a \emph{singular point} of~\eqref{eq:systpq}.

Define a bivariate series $f(x,u)$ to be \emph{$(p,q)$-rational} if it admits a rational
expression in terms of $p(x,u)$ and $q(x,u)$. Note that the elementary functions
$(x,u)\mapsto x$ and $(x,u)\mapsto u$ are $(p,q)$-rational, since
$u=q-q^2-2pq$ and $x=(p-2pq-p^2)/(q-2pq-q^2)$.  
Remarkably, the series counting rooted maps 
in any genus are $(p,q)$-rational. Precisely, 
Arqu\`es has shown in~\cite{Ar87} that 
\begin{equation}\label{eq:expMpr}
\overrightarrow{M}_0(x,u)=\frac1{xu^2}p\,q\,(1-2p-2q),
\end{equation}
a result obtained formerly by Tutte under an equivalent
parametrization \cite{TutteMaps}. In addition, Arqu\`es and
Giorgetti~\cite{ArGiFirst} have proved that, for $g>0$, there is a
bivariate polynomial $P_g(X,Y)$ such that
\begin{equation}\label{eq:expMgr}
\Mgr(x,u)=\frac{P_g(p,q)}{\Delta^{5g-3}}.
\end{equation}

\begin{lemma}
For every singular point $(x_0,u_0)$ of~\eqref{eq:systpq}, the polynomial $P_g$
in~\eqref{eq:expMgr} is non-zero at $(p_0,q_0)$, 
where $p_0=p(x_0,u_0)$ and $q_0=q(x_0,u_0)$. 
\end{lemma}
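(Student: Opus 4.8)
The plan is to show that $P_g(p_0,q_0)\neq 0$ by comparing the known asymptotic growth of rooted maps of genus $g$ with the asymptotic behaviour that formula~\eqref{eq:expMgr} would force if $P_g$ vanished at a singular point. Recall from the introduction that Bender and Canfield~\cite{BeCa86} proved that the number of rooted maps of genus $g$ with $n$ edges grows like $t_g\,n^{5(g-1)/2}12^n$ with $t_g>0$; equivalently, the univariate series $\Mgr(x,1)$ (or $\Qg'(x,1)$) has a singular expansion of order exactly $\alpha=\tfrac52(1-g)$ at its dominant singularity, with a nonzero leading coefficient. The same is true, with the appropriate growth constant, for the bivariate series at each dominant singular point $(x_0,u_0)$ of the system~\eqref{eq:systpq} — this is the content of the map enumeration results of~\cite{BeCaRi93,BeGaRiWo96} that underlie the whole paper.

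First I would analyse the local behaviour of $p(x,u)$, $q(x,u)$ and $\Delta(p,q)$ near a singular point $(x_0,u_0)$. By definition such a point is where $\Delta(p_0,q_0)=0$ with $\Delta>0$ just before it, and the system~\eqref{eq:systpq} is of the smooth implicit-function type with the Jacobian $\Delta$ degenerating there; standard singularity analysis of algebraic systems (as in~\cite{FlaSe}, and exactly as used by Bender--Canfield) shows that $p$ and $q$ have square-root singular expansions in the relevant variable, and consequently $\Delta(p(x,u),q(x,u))$ vanishes like a square-root: $\Delta = c\,X\,(1+O(X))$ for a nonzero constant $c$, where $X=\sqrt{1-x/\rho(u)}$ (for fixed $u$ near $u_0$). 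Hence $\Delta^{5g-3}$ contributes a factor $X^{5g-3}$ to the local expansion, i.e. an order of $-(5g-3)/2 = \tfrac52(1-g) - 1 = \alpha - 1$.

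Next I would plug this into~\eqref{eq:expMgr}. If $P_g(p_0,q_0)\neq 0$, then $P_g(p,q)$ is analytic and nonzero at the singular point (it is a polynomial in the analytic, but singular, functions $p,q$; write $P_g(p,q)=P_g(p_0,q_0)+O(X)$), and dividing by $\Delta^{5g-3}=c^{5g-3}X^{5g-3}(1+O(X))$ gives $\Mgr$ a singular expansion whose leading term is $X^{-(5g-3)}$ with nonzero coefficient — that is, order $\alpha-1$, matching the known growth $n^{5(g-1)/2} = n^{(\alpha-1)/?}$... more precisely matching $[x^n]\Mgr \sim c\,n^{-(\alpha-1)-1} = c\, n^{-\alpha} = c\,n^{5(g-1)/2-?}$, consistent with the Bender--Canfield estimate once one accounts for the $u=1$ specialization. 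Conversely, if $P_g(p_0,q_0)=0$, then $P_g(p,q)$ vanishes to some positive order in $X$ at the singular point, so the order of the singularity of $\Mgr$ at $(x_0,u_0)$ would be strictly larger than $\alpha-1$; by the transfer theorem this would force $[x^n]\Mgr(x,1)$ to be $o(n^{5(g-1)/2}12^n)$, contradicting the lower bound $t_g>0$ in the Bender--Canfield asymptotics. (One should be slightly careful that the relevant $(x_0,u_0)$ governs the $u=1$ slice, or more robustly run the argument at each singular point using the corresponding bivariate asymptotics of~\cite{BeCaRi93}; this also rules out a cancellation conspiracy between $P_g$ and $\Delta$.) Therefore $P_g(p_0,q_0)\neq 0$.

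The main obstacle I expect is the last bookkeeping point: one must be sure that the ``known'' asymptotic estimate for rooted maps of genus $g$ really does pin down the singularity order of the \emph{bivariate} series at \emph{every} singular point of~\eqref{eq:systpq}, not merely at the dominant one on the slice $u=1$. This is where one invokes the full strength of the results of Bender, Canfield, Richmond and Wormald~\cite{BeCaRi93,BeGaRiWo96}, which give the asymptotics of maps counted by edges and vertices jointly; those results guarantee the leading coefficient is positive throughout the relevant range, which is exactly what excludes $P_g$ vanishing. The algebraic input — that $\Delta$ vanishes to order exactly $1$ (a simple square-root) in $X$ at a singular point, and that $p,q$ are analytic functions of $X$ there — is routine singularity analysis of the quadratic system~\eqref{eq:systpq} and should be dispatched quickly.
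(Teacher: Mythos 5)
Your proof is correct and takes essentially the same approach as the paper: both compare the known singular order $\alpha-1$ of $\Mgr$ at every singular point of~\eqref{eq:systpq} (citing Bender--Canfield and the bivariate extension in~\cite{BeCaRi93}) against the order forced by the $\Delta^{-(5g-3)}$ factor, using the square-root vanishing of $\Delta$ to conclude that $P_g(p_0,q_0)=0$ would make $\Mgr$ less singular than it is known to be. The paper phrases the local expansion in the variable $u$ at fixed $x_0$ while you use $X=\sqrt{1-x/\rho(u)}$, but by Lemma~\ref{lem:exch} these are interchangeable, and your worry about a "cancellation conspiracy" is unnecessary since the argument is a pure order comparison.
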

\begin{proof}
Bender and Canfield showed
in~\cite{BeCa86} that, for $g\geq 0$, $u\mapsto\Qgr(1,u)$ admits a
singular expansion of order $\alpha-1$ around $1/12$.
In~\cite[proof of Theorem 2]{BeCaRi93}, working with similar
algebraic expressions as~\eqref{eq:expMgr} they provide a sketchy
justification that all arguments in~\cite{BeCa86} can be extended
to the bivariate case. In our terminology it means that, for every singular
point $(x_0,u_0)$ of~\eqref{eq:systpq}, the series
$u\mapsto\Mgr(x_0,u)$ admits a singular expansion of order
$\alpha-1$ around $u_0$. Note that, using for instance the main
theorem in~\cite{drmota97systems}, one easily shows that
$$
\Delta(p(x_0,u),q(x_0,u))=\sqrt{U}\cdot h\big(\sqrt{U}\big),\ \mathrm{with}\
U=\sqrt{1-u/u_0},
$$
and with $h$ analytic and non-zero at $0$. Since
$\Mgr(x_0,u)=P_g(p(x_0,u),q(x_0,u))/\Delta^{5g-3}$ is of order $\alpha-1=-\tfrac1{2}(5g-3)$
at $u_0$ and since $P_g(p(x_0,u),q(x_0,u))$ converges to $P_g(p_0,q_0)$, we
conclude that $P_g(p_0,q_0)\neq 0$.
\end{proof}

\paragraph{Near-irreducible quadrangulations.}\label{sec:nearirr}
Composing Equation~\eqref{eq:QR} with Equation~\eqref{eq:RS},
we obtain the following link
between $\Qg$ and $\Sg$:
\begin{equation}\label{eq:QS}
\Qg(x,u)=\Sg(x,w),\ \ \mathrm{with}\ w=H(x,u)=W(x,V(x,u)).
\end{equation}
Differentiating this equation with respect to $u$ 
and multiplying by $2u$, we obtain
\begin{equation}\label{eq:MrS}
\Mgr(x,u)=2uH'(x,u)\cdot\Sg\ \!\!\!'(x,w),\ \ \mathrm{where}\ w=H(x,u).
\end{equation}

The series $H(x,u)$ has a combinatorial interpretation: it
enumerates rooted planar quadrangulations (the outer face is the
one to the right of the root edge) where the four outer vertices
are distinct, and where the root-edge and the opposite outer edge
are simple. These are only minor constraints on quadrangulations;
and by elementary decompositions and calculations (see for instance~\cite{MuSc68} for similar
calculations, and more recently~\cite[Sect.7]{ChFuKaSh07}), $H(x,u)$
is $(p,q)$-rational. So is $2uH'(x,u)$ because $(p,q)$-rationality
is stable under derivation, see~\cite{FuSLC}; call $A(p,q)$ the algebraic expression
such that $2uH'(x,u)=A(p(x,u),q(x,u))$. Note also the singular points of $2uH'(x,u)$
are the same as the singular points of $(p,q)$. (Indeed, $2uH'(x,u)$
has the same singular points as $H(x,u)$. The series $H(x,u)$ itself is dominated
 coefficientwise 
by the series counting rooted planar quadrangulations, whose singular points
are those of $(p,q)$, so the singular points of $H(x,u)$ can not occur
before those of $(p,q)$.)  

Define $r=r(x,w)$ and $s=s(x,w)$ as the algebraic series defined by the system

\begin{equation}\label{eq:systrs}
\left\{\begin{array}{rcl}
r&=&xw(1+s)^2,\\
s&=&w(1+r)^2.
\end{array}\right.
\end{equation}
%
\cacher{ The Jacobian determinant of this system is
$1+4xw^2(1+r)(1+s)=1+4rs/((1+r)(1+s))=(r+s-3rs)/((1+r)(1+s))$,
which cancels if and only if  $r+s-3rs=0$. Hence the singularities
of~\eqref{eq:systrs} are solutions of the equation}
%
As shown in~\cite{BeRi84}, for $x_0>0$ the radius of convergence
of $w\mapsto r(x_0,w)$ (and also of $w\mapsto s(x_0,w)$) is the
value $w_0>0$ such that
\begin{equation}
1+r+s-3rs=0.
\end{equation}
Such a solution $(x_0,w_0)$ is called a \emph{singular point} 
of~\eqref{eq:systrs}. Hence singular points of~\eqref{eq:systrs}
are the singular points of $r(x,w)$ (and also of $s(x,w)$).

\begin{lemma}
For $x_0$, let $u_0$ and $w_0$ be such that $(x_0,u_0)$ is a singular
point of~\eqref{eq:systpq} and $(x_0,w_0)$ is a singular point of~\eqref{eq:systrs}.
Then $w_0=H(x_0,u_0)$, i.e., the singular points of~\eqref{eq:systpq}
are mapped to singular points of~\eqref{eq:systrs} by the change of variable
$(x,u)\mapsto(x,H(x,u))$. 

In addition, since $u\mapsto H(x_0,u)$ is strictly increasing (by positivity of
the coefficients), it maps the interval $(0,u_0)$  to the
interval $(0,w_0)$. 
\end{lemma}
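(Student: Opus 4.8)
The plan is to verify that the composition $w=H(x,u)$ maps the singular point $(x_0,u_0)$ of system~\eqref{eq:systpq} onto the singular point of system~\eqref{eq:systrs} by matching the singular behaviour on both sides of Equation~\eqref{eq:QS}. The key observation is that $\Qg(x,u)=\Sg(x,w)$, and, more usefully after differentiating, $\Mgr(x,u)=2uH'(x,u)\cdot\Sg\,'(x,w)$ as in~\eqref{eq:MrS}. We know from the previous lemma (and the underlying work of Bender--Canfield~\cite{BeCa86,BeCaRi93}) that $u\mapsto\Mgr(x_0,u)$ has a genuine singularity at $u=u_0$, of order $\alpha-1$; in particular it is \emph{not} analytic at $u_0$. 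On the other hand, we recorded that $2uH'(x,u)=A(p,q)$ is $(p,q)$-rational with the \emph{same} singular points as $(p,q)$, and moreover $H(x,u)$ itself, being dominated coefficientwise by the series of rooted planar quadrangulations, has radius of convergence in $u$ at least $u_0$ for fixed $x=x_0$.

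First I would argue that $w_0:=H(x_0,u_0)$ cannot exceed the radius of convergence $\widetilde{w}_0$ of $w\mapsto\Sg\,'(x_0,w)$. Indeed, $\Sg\,'$ has non-negative coefficients and $H(x_0,u)$ is finite and increasing on $[0,u_0]$, so $\Sg(x_0,H(x_0,u))=\Qg(x_0,u)$ is a finite convergent series at $u=u_0$ (being dominated by $\Qg(x_0,u_0)<\infty$); hence $H(x_0,u_0)$ lies in the disk of convergence of $\Sg(x_0,\cdot)$, giving $w_0\le\widetilde{w}_0$. Second, and this is the crux, I would show $w_0\ge\widetilde{w}_0$, equivalently that $w=H(x,u)$ already reaches the singularity of $\Sg\,'$ at $(x_0,u_0)$. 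Suppose not, i.e. $H(x_0,u_0)<\widetilde{w}_0$. Then $w\mapsto\Sg\,'(x_0,w)$ is analytic at $w=w_0=H(x_0,u_0)$. Since $H(x_0,u)$ is analytic at $u_0$ as well (its singular points coincide with those of $(p,q)$ only because... — here one must be slightly careful: $2uH'$ has the same singular points as $(p,q)$, and since $H=\int$, $H$ inherits analyticity wherever $2uH'$ is analytic, so $H(x_0,u)$ is analytic at $u_0$ too). Then the right-hand side $2uH'(x,u)\cdot\Sg\,'(x_0,H(x_0,u))$ of~\eqref{eq:MrS} would be a composition/product of functions all analytic at $u_0$, hence analytic at $u_0$ — contradicting the fact that $\Mgr(x_0,u)$ is singular at $u_0$. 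This forces $w_0=\widetilde{w}_0$, i.e. $H(x_0,u_0)$ is exactly the singular point of system~\eqref{eq:systrs}, which by the cited result of Bender--Richmond~\cite{BeRi84} is characterized by $1+r+s-3rs=0$.

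The main obstacle I anticipate is pinning down precisely \emph{why} $2uH'(x_0,u)$ (equivalently $A(p(x_0,u),q(x_0,u))$) is analytic at $u=u_0$, rather than singular there. The subtlety is that $A(p,q)$ is a rational function of $p,q$, so it could in principle blow up where a denominator vanishes, i.e. at the singular point of~\eqref{eq:systpq}; one has to use the combinatorial interpretation of $H$ (rooted planar quadrangulations with mild boundary constraints) to see that $H$ has \emph{strictly larger} radius of convergence in $u$ at $x=x_0$ — or at least that $H(x_0,u_0)$ and $H'(x_0,u_0)$ are finite — which is exactly the parenthetical remark already made in the excerpt: $H$ is dominated coefficientwise by the series of rooted planar quadrangulations, whose $u$-singularity (for fixed $x=x_0$) sits at $u_0$ or beyond. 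Thus $H(x_0,u)$ and $2uH'(x_0,u)$ are finite at $u_0$; combined with $(p,q)$-rationality and the structure of~\eqref{eq:expMgr}, one checks they are in fact analytic there (the determinant $\Delta$ vanishes at $u_0$ but appears only in the numerator-type expression for $\Mgr$, not forced into the denominator of $A$). Once that analyticity is secured, the contradiction argument above closes immediately.

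Finally, the ``in addition'' clause is routine: $u\mapsto H(x_0,u)$ has non-negative Taylor coefficients (it counts a combinatorial family), hence is strictly increasing and continuous on $[0,u_0)$, with $H(x_0,0)=0$ and $\lim_{u\to u_0^-}H(x_0,u)=w_0$ by the above, so it is a bijection from $(0,u_0)$ onto $(0,w_0)$ by the intermediate value theorem.
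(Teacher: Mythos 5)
Your approach is genuinely different from the paper's: the paper computes, for the substitution $w=H(x,u)$, an explicit algebraic expression for $p,q$ in terms of $r,s$ (Equation~\eqref{eq:exppq}), which yields the clean identity $\Delta(p,q)=\tDelta(r,s)$ with numerator containing the factor $1+r+s-3rs$; the conclusion then drops out immediately since $r,s>0$. You instead try to compare singular behaviour across the functional equation~\eqref{eq:MrS}, which is a reasonable indirect route, but your execution has a genuine gap.

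The gap is your claim that $H(x_0,u)$ and $2uH'(x_0,u)$ are \emph{analytic} at $u_0$. They are not: the paper establishes that the singular points of $H$ and of $2uH'$ coincide exactly with those of $(p,q)$, so $u_0$ \emph{is} a singular point of $u\mapsto H(x_0,u)$. Your attempted justification confuses two things. First, you write ``since $H=\int$, $H$ inherits analyticity wherever $2uH'$ is analytic, so $H(x_0,u)$ is analytic at $u_0$ too,'' but $2uH'$ is \emph{not} analytic at $u_0$ (you quoted the fact that it has the same singular points as $(p,q)$ in the same breath). Second, you argue that $A(p,q)$ has no $\Delta$ in its denominator, hence is analytic: that shows $A(\cdot,\cdot)$ is analytic as a function of $(p,q)$ near $(p_0,q_0)$, but the composed series $A(p(x_0,u),q(x_0,u))$ still inherits the square-root singularity of $p(x_0,u),q(x_0,u)$ at $u_0$, and so is not analytic there. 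Consequently, the contradiction you want (``RHS of~\eqref{eq:MrS} would be analytic at $u_0$'') does not materialize, since the RHS is in any case non-analytic at $u_0$ through the factors $2uH'$ and $H$.

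The argument can be salvaged by replacing ``analytic'' with ``bounded.'' Fix any $g\geq 1$. Then $\Mgr(x_0,u)$ has a singular expansion of order $\alpha-1=\tfrac32-\tfrac{5g}{2}<0$, hence \emph{diverges} as $u\to u_0^-$. On the other hand $2uH'(x_0,u)$ is of positive singular order at $u_0$ and in particular converges to a finite limit. If $S_g\ \!\!\!'(x_0,\cdot)$ were analytic (or merely bounded) at $w_0=H(x_0,u_0)$, the right-hand side of~\eqref{eq:MrS} would remain bounded as $u\to u_0^-$, contradicting the divergence of $\Mgr$. This gives $w_0\geq\widetilde w_0$; your argument for $w_0\leq\widetilde w_0$ via positivity of coefficients and finiteness of $\Qg(x_0,u_0)$ is fine, and the ``in addition'' clause is routine. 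With that repair the indirect argument is correct, though it is less self-contained than the paper's explicit algebraic change of variable, and it requires invoking a positive-genus map series to generate the needed blow-up.
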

\begin{proof}
From manipulations of algebraic series (see for
instance~\cite[Sect. 2.9]{GoJa83} for the univariate case and more
recently~\cite[Sect. 7]{ChFuKaSh07} for the bivariate case), one
shows that, when $(x,w)$ and $(x,u)$ are related by $w=H(x,u)$,
$p$ and $q$ are expressed as
\begin{equation}\label{eq:exppq}
p=\frac{(1+r)r}{rs+3s+2s^2+3r+2r^2+1},\ \  q =
\frac{(s+1)s}{rs+3s+2s^2+3r+2r^2+1},
\end{equation}
which implies that
$$
\Delta(p,q)=\tDelta(r,s):=\frac{(1+r)(1+s)(1+r+s-3rs)}{(1+3r+3s+2r^2+2s^2+rs)^2}.
$$

Let $(x_0,u_0)\in\mathbb{R}_+^2$ be a singular point
of~\eqref{eq:systpq} (a solution of $\Delta(p,q)=0$), and let
$w_0=H(x_0,u_0)$. Since $r$ and $s$ are positive at $(x_0,w_0)$,
$\Delta(p,q)=0=\tDelta(r,s)$ implies $1+r+s-3rs=0$.
Since $u\mapsto H(x_0,u)$ is strictly increasing by positivity of the coefficients,
any $w<w_0$ corresponds to some $u<u_0$. Since $(x_0,u_0)$ is a singular point of~\eqref{eq:systpq},
$\Delta(p(x_0,u),q(x_0,u))>0$. So $\tDelta(r(x_0,w),s(x_0,w))>0$. 
 Hence 
$(x_0,w_0)$ is a singular point of~\eqref{eq:systrs}.
\end{proof}

Next we get an expression for $\Sg\ \!\!\!'(x,w)$. Substituting
expression~\eqref{eq:expMgr} of $\Mgr(x,u)$ into~\eqref{eq:MrS}
and replacing $p$ and $q$ by their expressions~\eqref{eq:exppq}, 
we obtain for $\Sg\ \!\!\!'(x,w)$ a rational expression
in terms of $(r,s)$, of the form
\begin{equation}\label{eq:expSgp}
\Sg\ \!\!\!'(x,w)=\frac{1}{\tA(r,s)}\frac{\tPg(r,s)}{(\tDelta(r,s))^{5g-3}},
\end{equation}
where $\tA(r,s)$ ($\tPg(r,s)$, resp.) equals $A(p,q)$ ($P_g(p,q)$, resp.)
after substitution of $p$ and $q$ by their expression~\eqref{eq:exppq}
in terms of $r$ and $s$. (Note that $\tPg(r,s)$ is a rational expression,
 the denominator being a power of $(1+3r+3s+2r^2+2s^2+rs)$.) 

\begin{lemma}\label{lem:Sgp}
For $g\geq 0$, the series $\Sg\ \!\!\!'(x,w)$ admits a singular expansion
of order $\alpha-1$ around every singular  point $(x_0,w_0)$
of~\eqref{eq:systrs}.
\end{lemma}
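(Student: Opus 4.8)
The plan is to read off the singular expansion of $\Sg\ \!\!\!'$ directly from the rational form~\eqref{eq:expSgp}, $\Sg\ \!\!\!'(x,w)=\tPg(r,s)\big/\big(\tA(r,s)\,\tDelta(r,s)^{5g-3}\big)$, by tracking the behaviour of $r(x_0,w)$ and $s(x_0,w)$ near the singular point $w_0$. (Equivalently, and as a useful consistency check, one may use~\eqref{eq:MrS}, $\Mgr(x,u)=2uH'(x,u)\cdot\Sg\ \!\!\!'(x,w)$ with $w=H(x,u)$, together with the fact established in the preceding lemma that $u\mapsto\Mgr(x_0,u)$ has a singular expansion of order $\alpha-1$ at $u_0$, and then divide out the factor $2uH'$.)

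Fix $x_0>0$ and let $(x_0,w_0)$ be the corresponding singular point of~\eqref{eq:systrs}; write $X=\sqrt{1-w/w_0}$. First, by the theory of positive algebraic systems applied to~\eqref{eq:systrs} --- precisely the kind of system treated in~\cite{BeRi84}, see also~\cite{drmota97systems} --- the series $w\mapsto r(x_0,w)$ and $w\mapsto s(x_0,w)$ admit log-free singular expansions of order $1/2$ at $w_0$: they are analytic in $X$ near $0$, with $r(x_0,w_0)=r_0$, $s(x_0,w_0)=s_0$ and non-vanishing $X$-coefficients. Hence any $(r,s)$-rational function whose denominator does not vanish at $(r_0,s_0)$ becomes, after the substitution $r\mapsto r(x_0,w)$, $s\mapsto s(x_0,w)$, analytic in $X$ near $0$. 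Now observe: (i) the polynomial $1+3r+3s+2r^2+2s^2+rs$ occurring in the denominators of $\tPg$ and $\tA$ is positive at $(r_0,s_0)$; (ii) $\tPg(r_0,s_0)=P_g(p_0,q_0)\neq0$ by the previous lemma, where $(p_0,q_0)$ is the image of $(r_0,s_0)$ under~\eqref{eq:exppq}, i.e. the singular point of~\eqref{eq:systpq} over $x_0$; (iii) $\tA(r_0,s_0)=A(p_0,q_0)$ equals the limiting value $2u_0H'(x_0,u_0)$ of $2uH'$ at that $(p,q)$-singular point, which is finite (the singular exponent of $H$ at $u_0$ is at least $1/2$, so $2uH'$ does not blow up --- recall $H$ is coefficient-wise dominated near its singularity by rooted planar quadrangulation series) and strictly positive (by positivity and non-constancy of the coefficients of $H$ in $u$); and (iv) $\tDelta(r_0,s_0)=0$ since $1+r_0+s_0-3r_0s_0=0$, while --- the delicate point --- $\tDelta(r(x_0,w),s(x_0,w))=X\cdot\widetilde h(X)$ with $\widetilde h$ analytic and $\widetilde h(0)\neq0$, i.e. $\tDelta$ vanishes to order exactly $1/2$ along the solution curve of~\eqref{eq:systrs}.

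Granting (i)--(iv), equation~\eqref{eq:expSgp} yields $\Sg\ \!\!\!'(x_0,w)=\varphi(X)\cdot X^{-(5g-3)}$ with $\varphi$ analytic and $\varphi(0)\neq0$; this is a log-free singular expansion of type $X^{3-5g}$, hence of order $\tfrac12(3-5g)=\tfrac52(1-g)-1=\alpha-1$. Since $x_0$ is arbitrary, this holds around every singular point of~\eqref{eq:systrs}. The main obstacle is point (iv): that $\tDelta$ does not vanish faster than order $1/2$. Restricted to the solution curve of~\eqref{eq:systrs}, $\tDelta$ is an analytic non-vanishing multiple of the Jacobian of that system, so the claim amounts to the Jacobian vanishing linearly in $X$ at $w_0$ --- the non-degeneracy underlying a genuine square-root singularity. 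One settles it either from the irreducibility/aperiodicity hypotheses of the Drmota--Lalley--Woods theory for~\eqref{eq:systrs}, or by transporting the analogous statement for $\Delta(p(x_0,u),q(x_0,u))$ (which has order exactly $1/2$ at $u_0$, as used in the proof of the previous lemma) through the algebraic identity $\tDelta(r,s)=\Delta(p,q)$, using that near $(u_0,w_0)$ the change of variable $w=H(x,u)$ is benign (it carries the universal planar-map exponent, so $\sqrt{1-w/w_0}$ and $\sqrt{1-u/u_0}$ differ by an analytic non-vanishing factor). The finiteness and non-vanishing asserted in (iii) are of the same nature and are handled the same way.
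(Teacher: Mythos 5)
Your proposal is correct and follows essentially the same route as the paper: both read off the singular expansion from the rational form~\eqref{eq:expSgp}, using the square-root singular expansions of $r,s$, the non-vanishing of $\tPg$ at $(r_0,s_0)$ via the preceding lemma, the convergence of $\tA$ by coefficient-wise domination by the planar quadrangulation series, and the exact order-$1/2$ vanishing of $\tDelta$. The only small divergence is in settling your point (iv): the paper verifies the expansion of $\tDelta$ by a direct Maple computation, whereas you appeal to DLW non-degeneracy or transport the exact order of $\Delta(p,q)$ through the change of variable $w=H(x,u)$ — a valid alternative since $\partial_u H(x_0,u_0)>0$ makes $\sqrt{1-w/w_0}$ and $\sqrt{1-u/u_0}$ comparable.
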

\begin{proof}
Using again the main theorem in~\cite{drmota97systems}, one shows
that $r$ and $s$ have square-root singular expansions around
$(x_0,w_0)$. Moreover, denoting by $R(x)$ the singularity function
of $r(x,w)$ with respect to $w$, one easily checks, using the
computer algebra system Maple, that $\tDelta(r,s)$ has around
$(x_0,w_0)$ an expansion of the form
$$
\tDelta(r(x,w),s(x,w))=W\cdot h(X,W),\ \mathrm{with}\ 
W=\sqrt{1-w/R(x)},\ X=x-x_0,
$$
and where $h(X,W)$ is analytic at $(0,0)$ and $h(0,0)\neq 0$. Note
also that the expressions of $p$ and $q$ in terms of $r$ and $s$
are positive; and $\tA(r(x_0,w),s(x_0,w))=2uH'(x_0,u)$ (with $w$
and $u$ related by $w=H(x,u)$) is positive for
$w\in(0,w_0)$ and we claim that it converges at $w_0$ (indeed $H(x,u)$
is dominated coefficientwise
 by the series counting rooted planar quadrangulations, satisfying
the ``universal'' singular order $3/2$ at every singular point, 
so $2uH'(x,u)$ is dominated coefficientwise by a series 
of singular order $1/2$, which thus converges at its singular points). 
Hence, looking at
Expression~\eqref{eq:expSgp} of $S_g\ \!\!\!'(x,w)$ and using the 
fact that $\tPg(r(x_0,w_0),s(x_0,w_0))=P_g(p(x_0,u_0),q(x_0,u_0))\neq 0$, 
we conclude that the
radius of convergence of $w\mapsto\Sg\ \!\!\!'(x_0,w)$ must be $w_0$, and
$\Sg\ \!\!\!'(x,w)$ must have a singular expansion of order $(3-5g)/2$.
\end{proof}

Applying Lemma~\ref{lem:integrate} (integration) to the series $S_g\ \!\!\!'(x,w)$, we
obtain the following.
\begin{lemma}\label{lem:Sg}
For $g\geq 0$, the series $\Sg(x,w)$ enumerating near-irreducible
quadrangulations of genus $g$ admits a singular expansion of order
$\alpha$ around every singular point of~\eqref{eq:systrs}.
\end{lemma}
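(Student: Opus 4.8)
The plan is straightforward: Lemma~\ref{lem:Sg} follows from Lemma~\ref{lem:Sgp} by a single application of the integration half of Lemma~\ref{lem:integrate}. First I would observe that $\Sg(x,w)$ and its $w$-derivative $\Sg\ \!\!\!'(x,w)=\partial_w\Sg(x,w)$ are linked by
$$
\Sg(x,w)=\int_0^w \Sg\ \!\!\!'(x,t)\,\mathrm{d}t,
$$
since $\Sg(x,0)=0$ (a quadrangulation has at least one face, so the generating function has no constant term in $w$). Lemma~\ref{lem:Sgp} gives that $\Sg\ \!\!\!'(x,w)$ has a singular expansion of order $\alpha-1$ around every singular point $(x_0,w_0)$ of~\eqref{eq:systrs}, with singularity function $R(x)$. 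Applying Lemma~\ref{lem:integrate} with the roles of $x$ and $u$ played here by $w$ and $x$ — which is legitimate since by Lemma~\ref{lem:exch} the leading variable in a singular expansion is immaterial — the integral $\int_0^w \Sg\ \!\!\!'(x,t)\,\mathrm{d}t$ admits a singular expansion of order $(\alpha-1)+1=\alpha$ around $(x_0,w_0)$, with the same singularity function $R(x)$. This is exactly the claim.

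One small point to address is that Lemma~\ref{lem:integrate} is stated for integration in the \emph{leading} variable; here the natural leading variable of the singular expansion of $\Sg\ \!\!\!'$ could be taken to be $w$ (the variable in which $r,s$ and hence $\tDelta$ develop their square-root singularity), so integrating in $w$ is integration in the leading variable and the lemma applies directly. If instead one has set things up with $x$ as leading variable, Lemma~\ref{lem:exch} lets us switch to $w$ first, at which point $\rho'(w_0)\neq 0$ holds because the singularity function $R(x)$ is non-constant (it is the inverse of the non-constant function $H(x,u_0)$ composed appropriately), so there is no degeneracy.

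I do not anticipate any real obstacle: the content of the lemma is entirely carried by Lemma~\ref{lem:Sgp}, and the only thing to verify is the boundary condition $\Sg(x,0)=0$ together with the bookkeeping of which variable is being integrated. The potential subtlety — whether the singular expansion passes from $\Sg\ \!\!\!'$ to its antiderivative without picking up a logarithm or changing the leading monomial structure — is precisely what Lemma~\ref{lem:integrate} asserts (it is designed to be closed under integration, as emphasized in the Remark following Lemma~\ref{lem:sing_prod}), so it is safe to invoke as a black box. Thus the proof is a one-line deduction: write $\Sg=\int_0^w \Sg\ \!\!\!'\,\mathrm{d}t$ and apply Lemma~\ref{lem:integrate}.
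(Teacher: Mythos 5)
Your proof is correct and follows exactly the paper's one-line argument: the paper's entire proof is ``Applying Lemma~\ref{lem:integrate} (integration) to the series $S_g\ \!\!\!'(x,w)$, we obtain the following.'' The extra bookkeeping you supply (the boundary condition $S_g(x,0)=0$ and the use of Lemma~\ref{lem:exch} to integrate in the second variable $w$) is implicit in the paper and correctly spelled out.
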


\paragraph{Near-irreducible quadrangulations with fixed edge-width.}
For $g\geq 0$ and $i\geq 1$, let $\cSgei$ be the family of
near-irreducible quadrangulations of genus $g$ with edge-width
$2i$. Define also  $\cSgci$ as the family of quadrangulations from 
$\cSg$ that have a marked non-contractible cycle $C$ of length
$2i$ that is additionally given a direction and carries a marked white vertex, called
\emph{starting vertex} of $C$. 
Denote by $\Sgei(x,w)$ and $\Sgci(x,w)$ the series of
$\cSgei$ and $\cSgci$. Note that $\cSgci$ is a superfamily of
$\cSgei$, so that  $\Sgei(x,w)\preceq\Sgci(x,w)$.


Given $S\in\cSgci$,  let $L=\Phi(S)$ be obtained as follows:
\begin{itemize}
\item
Cut the surface $\mathbb{S}_g$ along $C$ (whether $C$
is surface-separating or not, two cases); this yields 
 two cylindric ends, and a \emph{special face} of
degree $2i$ appears at the boundary of each of the two cylindric
ends; see Figure~\ref{fig:cuts} (orienting $C$ is needed to
distinguish the cylindric ends; there is one on the left of $C$
and one on the right of $C$).

\item
Add a marked black vertex, called a \emph{special vertex},
inside each special face, and connect this vertex
to all white vertices on the contour of the face, see Figure~\ref{fig:cuts}(b).
The special vertex in the left (right, resp.) cylindric end is denoted $v_l$
($v_r$, resp.). Mark the two edges $(v,v_l)$ and $(v,v_r)$, where $v$ is the starting vertex of $C$.
\end{itemize}

\begin{figure}[htb]
\begin{center}
\scalebox{0.38}{\includegraphics{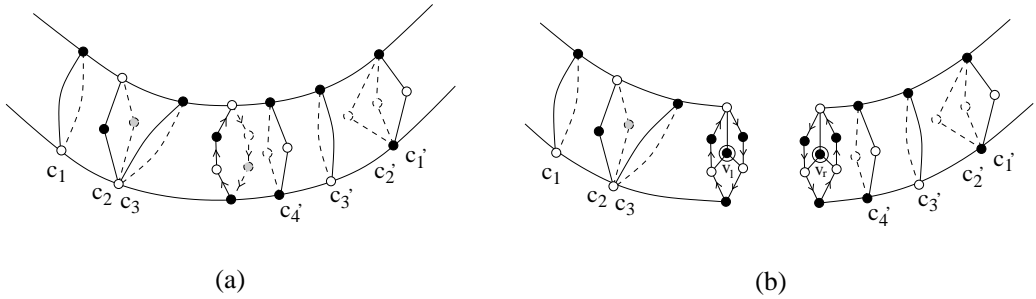}}
\end{center}
\caption{Cutting along a non-contractible cycle cuts a handle
and creates two cylindrical ends.} \label{fig:cuts}
\end{figure}

Let $\cL=\Phi(\cSgci)$ be the family of
well-labelled quadrangulations (i.e., with the $m$ edges relabelled so as to
have distinct labels in $[1..m]$) 
 obtained in this way, and let $L(x,w)$ be the series
of $\cL$. Note that
\begin{equation}\label{eq:SL}
x^{i+2}u^{2i}\cdot S_g^{C=2i}(x,w)= L(x,w).
\end{equation}
An object $L\in\cL$
is of two possible types:
\begin{itemize}
\item
Either $L$ is disconnected (the original cycle $C$ is
surface-separating), in which case $L$ has two connected
components $L_1$ and $L_2$, each component carrying one special
cycle (and one special vertex); in addition, the sum of genera of
$L_1$ and $L_2$ is $g$;  
  \item
or $L$ is connected of genus $g-1$.
 \end{itemize}
We denote by $\cLhs$ the subfamily of objects of $\cL$ of the
first type, and by $\cLhc$ the subfamily of objects of $\cL$ of
the second type.

\begin{lemma}\label{lem:Sgci}
For $g>0$, let as before $\alpha=5(1-g)/2$. Then 
$\Sgci(x,w)$ is bounded coefficientwise by a series  of singular order
$\alpha+1/2$ around every singular  point
$(x_0,w_0)$ of~\eqref{eq:systrs}.
\end{lemma}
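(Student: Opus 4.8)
\emph{Proof plan.} The plan is to exploit the decomposition $\cL=\cLhs\sqcup\cLhc$ introduced just above, and to bound the two corresponding series separately. Since $x^{i+2}w^{2i}\cdot\Sgci(x,w)=L(x,w)$ and the factor $x^{i+2}w^{2i}$ is analytic and non-zero in a neighbourhood of $(x_0,w_0)$ (as $x_0,w_0>0$), it suffices to show that each of $L^{\mathrm{spl}}(x,w)$ and $L^{\mathrm{con}}(x,w)$ is bounded coefficientwise by a series admitting a singular expansion of order $\alpha+1/2$ around $(x_0,w_0)$. Set $\alpha_j:=\tfrac52(1-g_j)$, so that the series counting near-irreducible quadrangulations of genus $g_j$ has singular order $\alpha_j$ around the singular points of~\eqref{eq:systrs} by Lemma~\ref{lem:Sg}; note $\alpha_{g-1}=\alpha+\tfrac52$ and $\alpha+\tfrac12=\tfrac52(2-g)-2$.

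\emph{The split part.} The key topological remark is that a surface-separating \emph{and} non-contractible simple closed curve on $\mathbb{S}_g$ cannot bound a disk on either side, so if $L\in\cLhs$ has components $L_1,L_2$ of genera $g_1,g_2$ with $g_1+g_2=g$, then necessarily $g_1\geq 1$ and $g_2\geq 1$ (in particular $\cLhs=\emptyset$ unless $g\geq 2$). Because the edge labels of $L$ distribute freely over the two components, $L^{\mathrm{spl}}(x,w)$ is a finite sum, over the partitions $g=g_1+g_2$ with $g_j\geq 1$, of a symmetry factor times a product $\Psi_1(x,w)\,\Psi_2(x,w)$, where $\Psi_j$ counts genus-$g_j$ quadrangulations carrying one special vertex, the corresponding special cycle (\emph{determined} by the special vertex, being the boundary of the union of its $i$ incident faces), and an orientation/starting vertex of that cycle. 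Marking the special (black) vertex costs one formal derivative in the size variable while the orientation costs only a bounded factor, so $\Psi_j$ is bounded coefficientwise by a series of singular order $\alpha_j-1$, which is $<0$ since $g_j\geq 1$. Lemma~\ref{lem:sing_prod} then gives for the product the singular order $(\alpha_1-1)+(\alpha_2-1)=\tfrac52(2-g)-2=\alpha+\tfrac12$, and summing over the finitely many partitions preserves this bound.

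\emph{The connected part.} An object of $\cLhc$ is a connected quadrangulation of genus $g-1$ equipped with two special vertices (each of degree $i$, each determining its special cycle as above), two marked edges and an edge relabelling. The plan is to bound this family — up to a controllable ``gadget'' around the cut — by the family of near-irreducible quadrangulations of genus $g-1$ with two marked faces: marking two faces costs two formal derivatives in the face variable (Lemma~\ref{lem:integrate}), so starting from singular order $\alpha_{g-1}=\alpha+\tfrac52$ (Lemma~\ref{lem:Sg}) one lands on singular order $\alpha+\tfrac12$. The delicate part is the reduction to near-irreducible quadrangulations: $L$ itself need not be near-irreducible, so one passes to its near-irreducible core, argues that the discarded part ($2$-cycles — none, since $L$ is simple — together with contractible non-facial $4$-cycles) is confined to a bounded number of faces near the two special vertices, and re-inserts the corresponding planar pieces, tracking the face variable and singularity function through this step via the critical-composition machinery (Lemma~\ref{lem:comp}/Corollary~\ref{coro:comp}), applied with $H$ (which has a log-free strong singular expansion of order $3/2$). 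This bookkeeping — making precise that the cut-data stays confined near the special vertices, and transferring the singular order from the genus-$(g-1)$ quadrangulation series to the singular points of~\eqref{eq:systrs} — is where I expect the main obstacle to lie; the remainder is routine once Lemmas~\ref{lem:sing_prod}, \ref{lem:integrate} and~\ref{lem:comp} are in hand, and no induction on $g$ is needed since the inputs are the (already proven) singular orders of $\cS_{g_j}$ and $\cS_{g-1}$.

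Combining the two bounds, $L=L^{\mathrm{spl}}+L^{\mathrm{con}}$ is bounded coefficientwise by a series of singular order $\alpha+1/2$ around $(x_0,w_0)$, and hence so is $\Sgci(x,w)=x^{-(i+2)}w^{-2i}L(x,w)$, which is the claim.
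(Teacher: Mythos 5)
Your proposal correctly identifies the two-case dichotomy $\cL=\cLhs\sqcup\cLhc$, lands on the right target orders for each case, and — crucially — flags exactly where the difficulty lies: the cut object $L$ (and, in the split case, each of its components) is \emph{not} a near-irreducible quadrangulation, because the contractible $2$- and $4$-cycles that accumulate around the two special vertices are not allowed in $\cS_{g'}$. But you explicitly leave this ``bookkeeping'' unresolved, and it is in fact the heart of the paper's proof, not a routine step.

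The paper's resolution is the following combinatorial device, which is what your proposal is missing. Let $\sigma_S$ be the sequence of non-contractible $2$- or $4$-cycles of $S$ homotopic to the marked cycle $C$; after cutting, this splits into two sequences $\sigma,\sigma'$ of \emph{contractible} cycles in $L$, nested around the left and right special vertices respectively. Cutting $L$ once more along the \emph{outermost} cycle of $\sigma$ (if nonempty) and the outermost cycle of $\sigma'$ (if nonempty) decomposes $L$ into (i) a genuinely near-irreducible core, counted by $K(x,w)=S_{g-1}\!\!\!''(x,w)+\sum_{g_1+g_2=g,\,g_j>0}S_{g_1}\!\!\!'(x,w)\,S_{g_2}\!\!\!'(x,w)$ (the derivatives here are in the face variable $w$, marking the one or two ``scar'' faces left behind by the cut — not in the vertex variable $x$ as your write-up of the split case suggests); and (ii) one planar gadget at each cylindric end, a member of one of two planar families $\tcSot$, $\tcSof$ (rooted planar bipartite maps with an outer face of degree $2$ or $4$, all inner faces of degree $4$, a secondary marked vertex $v$, and every $2$-cycle or non-facial $4$-cycle enclosing $v$). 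This yields the coefficientwise bound $L(x,w)\preceq K(x,w)\cdot\bigl(\tSot(x,w)+\tSof(x,w)+1\bigr)^2$, which covers the split and connected cases uniformly. The rest of the proof is then genuinely routine in the sense you describe: a composition-scheme argument (via $\tQof$ and the substitution $w=H(x,u)$) shows $\tSot,\tSof$ have singular expansions of positive half-integer order and hence converge at $(x_0,w_0)$; Lemma~\ref{lem:integrate} and Lemma~\ref{lem:sing_prod} give $K$ order $\alpha+1/2$; Lemma~\ref{lem:sing_prod} once more finishes. Note also there is no appeal to Lemma~\ref{lem:comp}/Corollary~\ref{coro:comp} with $H$ inside this proof — the singular orders of $\tSot,\tSof$ are transferred by an explicit rational parametrization in $(r,s)$, not by the critical-composition lemma — so that part of your plan would not be the right tool either. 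In short: right skeleton and right orders, but the missing piece (the outermost-cycle cut and the families $\tcSot,\tcSof$) \emph{is} the proof.
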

\begin{proof}
By Equation~(\ref{eq:SL}), it is enough to prove the result for
the series $L(x,w)$ instead of working with $\Sgci(x,w)$. Let
$S\in\cSgci$, with $L=\Phi(S)$. Let 
$\sigma_S=(c_1,\ldots,c_{\ell},c_r',\ldots,c_1')$ be the possibly
empty sequence of non-contractible 2- or 4-cycles homotopic to
the marked cycle $C$ of $S$, where $c_1,\ldots,c_{\ell}$ are to
the left of $C$ and $c_r',\ldots,c_1'$ are to the right of $C$;
see Figure~\ref{fig:cuts}. 
Here we need to handle with a special care the case of the torus:
for $g=1$, the nested cycles form only one sequence (arranged along a unique cylinder), and by convention we decide that all these cycles are on the left part and form the sequence $(c_1,\ldots,c_{\ell})$, while the right-sequence $(c_r',\ldots,c_1')$ is empty.

When cutting along $C$ (cutting a
handle), $\sigma_S$ is cut into two (possibly empty) sequences
$\sigma=(c_1,\ldots,c_{\ell})$ and $\sigma'=(c_1',\ldots,c_r')$ of
contractible 2- or 4-cycles of $L$, one at each cylindric end.
Note that all cycles of $\sigma$ and of $\sigma'$ must enclose the
special vertex at the corresponding cylindric end. Call $\tcSot$
 the family of rooted planar bipartite maps
---the root is an outer edge directed so as to have the outer face
on its right and a black origin--- with a secondary marked vertex
$v$, an outer
face of degree $2$, all inner faces of degree $4$, and such that
each 2-cycle and each non-facial 4-cycle encloses $v$ in its
interior. Define similarly $\tcSof$, but now with the condition
that the outer face is of degree $4$ with $4$ distinct vertices.

Let $\tSot(x,w)$ and $\tSof(x,w)$ be the series enumerating
$\tcSot$ and $\tcSof$, respectively,  according to black inner
vertices and inner faces. The operation of cutting the surface
along the first cycle of $\sigma$ (if $\sigma$ is not empty) and
the first cycle of $\sigma'$ (if $\sigma'$ is not empty) yields
\begin{equation}\label{eq:Lh}
L(x,w)\preceq K(x,w)\cdot(\tSot(x,w)+\tSof(x,w)+1)^2,
 \end{equation}
 where
 $$
K(x,w):=S_{g-1}\ \!\!\!''(x,w)+\sum_{\substack{g_1+g_2=g\\ g_1>0,g_2>0}}
S_{g_1}\ \!\!\!'(x,w)S_{g_2}\ \!\!\!'(x,w).
 $$
Indeed, each of the two sequences $\sigma,\sigma'$, if not empty,
yields a piece either in $\tcSot$ (say, if $\sigma$ starts with a
2-cycle) or in $\tcSof$ (if $\sigma$ starts with a 4-cycle) at the
corresponding cylindric end. Now let us obtain a singular
expansion for $\tSof(x,w)$ (the method for $\tSot(x,w)$ is
similar)\footnote{In the univariate case, Bender et al.~\cite{BeGaRiWo96} count $\tcSot$
and $\tcSof$ by characterizing the pieces between two successive
cycles of $\sigma$. Instead, our approach to count these classes
relies on a composition scheme.}:

\medskip

\noindent{\bf Claim A.} The series $\tSof(x,w)$ admits a 
singular expansion of positive half-integer order around every
singular  point $(x_0,w_0)$ of~\eqref{eq:systrs}.

\medskip

\noindent\emph{Proof of the claim.} Let $\tcRof$ be the family of rooted 
quadrangulations defined as $\tcSof$, except that any type  of
non-facial 4-cycle is allowed; and let $\tcQof$ be the family of
rooted quadrangulations with 4 distinct outer vertices and a
secondary marked vertex $v_0$. One extracts from $Q\in\tcQof$ a
core $R\in\tcRof$ by collapsing all 2-cycles not strictly enclosing $v_0$.
Hence, similarly as in Equation~\eqref{eq:QR}, 
the series  $\tQof(x,v)$ and $\tRof(x,u)$ enumerating
$\tcRof$ and $\tcQof$ by black inner vertices and inner faces are related by
$$
\tQof(x,u)=\frac{V(x,u)}{u}\tRof(x,V(x,u)),
$$
where $V(x,u)$ is defined in~\eqref{eq:QR}. Similarly one extracts
from $R\in\tcRof$ a core $S\in\tcSof$ by emptying all 4-cycles that are not
strictly enclosing $v_0$. Hence, similarly as in Equation~\eqref{eq:RS}, the series  $\tRof(x,v)$ and
$\tSof(x,w)$ are related by 
$$
\tRof(x,v)=\tSof(x,W(x,v)),
$$
where $W(x,v)$ is defined in~\eqref{eq:RS}. Hence we obtain, with
$H(x,u)=W(x,V(x,u))$,
\begin{equation}
\frac{u}{V(x,u)}\,\tQof(x,u)=\tSof(x,w),\ \ \mathrm{where}\
w=H(x,u).
\end{equation}
One easily shows (from elementary manipulations similar as in~\cite{FuSLC}) 
that $\tQof(x,u)$ and
$V(x,u)$ are expressed rationally in terms of the series $p$ and
$q$ defined in~\eqref{eq:systpq}, and that the singular order of
$\tQof(x,u)$ is $1/2$ at each singular point, whereas the singular
order of $V(x,u)$ is $3/2$ at each singular point. The reason is
that the series of rooted quadrangulations with four distinct
outer vertices is of singular order $3/2$, as any series counting
a ``natural'' family of rooted maps;  so $\tQof(x,u)$ is of singular
order $1/2$ by derivation effect. And $V(x,u)$ is equal to 
$2u(1+Q_0'(x,u))^2/x$, so has the same singular order as
$Q_0'(x,u)$, i.e., $3/2$.

Replacing $p$ and $q$ by their expressions~\eqref{eq:exppq}, one
gets a rational expression for $\tSof(x,w)$ in terms of $r$ and
$s$. As already discussed in Section~\ref{sec:nearirr}, the
singular points of~\eqref{eq:systpq} are mapped by $H(x,u)$ into
the singular points of~\eqref{eq:systrs}. Since $\tQof(x,u)$ and
$V(x,u)$ converge to positive constants at every singular point $(x_0,u_0)$ 
of~\eqref{eq:systpq}, $\tSof(x,w)$  converges at every singular 
point $(x_0,w_0)$ of~\eqref{eq:systrs}. Since $\tSof(x,w)$ is
rationally expressed in terms of $r$ and $s$, which have
square-root singular expansion around $(x_0,w_0)$, we conclude
that $\tSof(x,w)$ admits a singular expansion of order $(2k+1)/2$
around $(x_0,w_0)$, with $k$ a non-negative integer. 
\qedclaim

Similarly, one proves that for every singular  point $(x_0,w_0)$
of~\eqref{eq:systrs}, $\tSot(x,w)$ has a singular
expansion of positive  half-integer order around every singular
point of~\eqref{eq:systrs}.

\medskip
\noindent{\bf Claim B.} The series $K(x,w)$ admits a 
singular expansion of  order $\alpha+1/2$ around every
singular  point $(x_0,w_0)$ of~\eqref{eq:systrs}.

\medskip

\noindent\emph{Proof of the claim.} Let $h=g-1$. Since $S_h(x,w)$ has a
singular expansion of order $\tfrac{5}{2}(1-h)$, by
Lemma~\ref{lem:integrate} $S_h\ \!\!\!''(x,w)$ has a singular expansion of
order  $\tfrac{5}{2}(1-h)-2=\alpha+1/2$ around $(x_0,w_0)$.
Moreover  for each pair $g_1,g_2$ of positive integers adding up to
$g$, the series $S_{g_1}\ \!\!\!'(x,w)$  and $S_{g_2}\ \!\!\!'(x,w)$ have singular
expansions of respective orders $5(1-g_1)/2-1$  and $5(1-g_2)/2-1$
around $(x_0,w_0)$. Hence, by Lemma~\ref{lem:sing_prod}, $S_{g_1}\ \!\!\!'(x,w)S_{g_2}\ \!\!\!'(x,w)$ has a singular expansion of order $3-5g/2=\alpha+1/2$. 
\qedclaim

We now conclude the proof of Lemma~\ref{lem:Sgci}. Since
$K(x,w)$ has a singular expansion of order
$\alpha+1/2$ and $\tSot(x,w)+\tSof(x,w)$ has a
converging singular expansion around $(x_0,w_0)$, then by Lemma~\ref{lem:sing_prod} the product of
these two series (which 
dominates coefficientwise $L(x,w)$ according to~\eqref{eq:Lh}) admits a
singular expansion of order $\alpha+1/2$.
 \end{proof}

\section{Counting graphs of genus $g$ asymptotically}\label{sec:graphs}


The \emph{genus} of a graph $G$ is the minimum genus of a surface
in which $G$ can be embedded. So planar graphs are graphs of genus
$0$.  The \emph{face-width} of a graph $G$ of genus $g\geq 0$ is
the maximum face-width over all embeddings of $G$ in
$\mathbb{S}_g$. Note that the face-width is $+\infty$ in genus $0$
since all curves in the sphere are contractible.

\subsection{Notations for graph families and counting series}
Graph families of connectivity $1$, $2$, $3$ are denoted by the
letters $\cC$, $\cB$, and $\cT$, respectively. The genus $g$ is
indicated as a subscript, and the face-width is indicated as a
superscript. For instance $\cC_g^{\fw\geq 2}$ is the family of
connected graphs of genus $g$ with face-width at least $2$. Unless
explicitly mentioned, all generating functions count graph
families according to vertices and edges, marked respectively by
variables $x$ and $y$. Since vertices are labelled, these series
are exponential in the first variable. For instance, if $c_{n,k}$
is the number of connected graphs of genus $g$ with $n$ vertices
and $k$ edges, then
$$
    C_g(x,y) = \sum c_{n,k} \, y^k {x^n \over n!}.
$$
In the rest of the paper (as opposed to the previous section) 
the derivative of a series $G(x,y)$ according to the \emph{first} variable is denoted
by $G'(x,y)$. 

Given a graph family $\cG$, the \emph{derived} class is the class $\cG'$
of graphs from $\cG$ with one marked vertex that is unlabelled
 and all the other $n$ vertices being as usual labelled by $[1..n]$. 
Similarly the \emph{doubly derived} class $\cG''$ is the class of graphs from
$\cG$ with \emph{two} marked vertices that are unlabelled (distinguished
as a first and a second marked vertex), all the other $n$ vertices being labelled
in $[1..n]$. The counting series of $\cG'$ and $\cG''$ are exponential
in the number of \emph{labelled vertices}, that is, each graph with $n$ labelled
vertices and $k$ edges has weight $x^ny^k/n!$ in the counting series.
If $G(x,y)$ is the counting series of $\cG$, 
it is well known that the counting series of $\cG'$ is $G'(x,y)$ 
and the counting series of $\cG''$ is $G''(x,y)$.

\subsection{Tutte's decomposition of graphs}
We recall here a well-known decomposition of a graph into
3-connected components, as formalized by Tutte~\cite{Tutte}. For
$k\geq 1$, a \emph{$k$-connected graph} is a graph of order at
least $k+1$ in which  at least $k$ vertices need to be deleted to
disconnect it. A connected graph is decomposed  into 2-connected
components, also called blocks, that are articulated around cut
vertices, so that the incidences between the 2-connected
components and the separating vertices form a tree. Similarly, a
2-connected graph $G$ is decomposed at its separating pairs of
vertices into a tree with three kinds of nodes: polygons, multiple
edges, and 3-connected graphs with at least four vertices. The
latter type are called the 3-connected components of $G$. See
\cite{survey} for a detailed discussion in the context of graph
enumeration.

Robertson and Vitray proved the following result.

\begin{theorem}[Robertson and Vitray~\cite{RoVi90}]\label{theo:Rob}
A connected graph $G$ of genus $g>0$ has face-width $k\geq 2$ if
and only if  $G$ has a unique 2-connected component of genus $g$
and face-width $k$, and all the other 2-connected components are
planar.

A 2-connected graph $G$ of genus $g>0$ has face-width $k\geq 3$ if
and only if  $G$ has a unique 3-connected component of genus $g$
and face-width $k$, and all the other 3-connected components are
planar.
\end{theorem}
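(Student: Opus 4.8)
The plan is to reduce both statements to two ingredients: the classical additivity of the orientable genus over the block--cut tree (and, for the second statement, over the pieces of Tutte's decomposition into $3$-connected components, polygons and bonds~\cite{Tutte,survey}, the last two contributing genus $0$), together with a local \emph{separation lemma} about embeddings at a cut vertex (resp.\ at a separating pair). The two statements are entirely parallel --- cut vertex $\leftrightarrow$ separating pair, block $\leftrightarrow$ $3$-connected component, intersection number $1\leftrightarrow 2$ and hence face-width threshold $2\leftrightarrow 3$ --- so I would write the first in full and then indicate the changes for the second.

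First I would isolate the following \emph{Separation Lemma}: if a connected graph $G$ is embedded in a surface $S$ with cut vertex $v$ and $G=G_1\cup_v G_2$ (with $G_1\cap G_2=\{v\}$), then either $S$ carries a non-contractible simple closed curve meeting $G$ only at $v$ (so the face-width of the embedding is $\le 1$), or one of $G_1,G_2$ lies in an open disk of $S$ and the induced embedding of the other piece has the same face-width. The proof is a case analysis on the rotation at $v$: if the stars of $G_1$ and $G_2$ interleave, one reads off a short non-contractible curve through $v$; otherwise there is a simple closed curve through $v$ and no other vertex with $G_1$ and $G_2$ on opposite sides, and this curve is either non-contractible (first alternative) or bounds a disk containing, say, $G_1$ (second alternative). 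I would pair this with the trivial monotonicity of face-width under inserting or deleting edges and vertices inside a single face: each non-contractible curve then meets at least as many (resp.\ at most as many) vertices, and a face-width-realizing curve can be slid off any disk region.

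For the ``only if'' direction I would take an embedding $\Pi$ of $G$ in $\mathbb{S}_g$ of face-width $k\ge 2$ and apply the Separation Lemma along every edge of the block--cut tree; since $k\ge 2$ always excludes the first alternative, all blocks but one are forced into disks and hence are planar, while the exceptional block $B$ inherits genus $g$ (by genus additivity and $\mathrm{genus}(G)=g$) and, by the ``deletion'' clause, face-width $k$ as an embedding in $\mathbb{S}_g$; maximizing over $\Pi$ gives $\mathrm{fw}(G)\le\mathrm{fw}(B)$. Conversely, a face-width-$k'$ embedding of $B$ in $\mathbb{S}_g$ extends, by re-inserting each planar block into a face near the appropriate cut vertex, to an embedding of $G$ of face-width $\ge k'$, so $\mathrm{fw}(G)\ge\mathrm{fw}(B)$; hence $\mathrm{fw}(G)=\mathrm{fw}(B)=k$. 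The ``if'' direction is precisely this construction: insert the planar blocks into faces near their cut vertices, use genus additivity to see the result lies in $\mathbb{S}_g$, and bound its face-width between $k$ (monotonicity of the construction) and $k$ (the ``only if'' direction applied to any face-width-$\ge 2$ embedding of $G$; embeddings of face-width $\le 1$ are irrelevant to the maximum since $k\ge 2$).

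The main obstacle I expect is the Separation Lemma, specifically the interleaving case and the verification that the non-contractible curve one extracts really has at most one point (resp.\ two points, at a separating pair) on $G$; this is exactly where the thresholds $2$ and $3$ come from, and it must be carried out on the rotation system rather than pictorially. The genus-additivity input is classical for blocks (Battle--Harary--Kodama--Youngs); for the $3$-connected decomposition one needs additivity of the orientable genus under edge amalgamation, which I would cite rather than reprove.
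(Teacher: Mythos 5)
The paper does not prove this statement; it cites it directly from Robertson and Vitray~\cite{RoVi90}, so there is no internal proof to compare your attempt against. Evaluating your proposal on its own terms: the overall architecture (a local Separation Lemma at cut vertices / separating pairs, plus additivity of genus over blocks and over Tutte's decomposition, plus monotonicity of face-width under insertion into and deletion from a disk region) is the right one and matches how this is done in the literature. The parts you flag as ``classical'' to be cited really are standard, and the ``if'' and ``only if'' bookkeeping via maximizing over embeddings is essentially correct.

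The genuine gap is in the proof of the Separation Lemma, and it is larger than you suggest. Two specific issues. First, even in the non-interleaved case the existence of a simple closed curve through $v$ meeting $G$ nowhere else is not automatic: you need the two ``transition corners'' at $v$ (between the last $G_1$-edge and the first $G_2$-edge, and between the last $G_2$-edge and the first $G_1$-edge) to lie on the same face. This is true, but it requires an explicit boundary-walk argument (the face walk leaving the first corner along $G_2$ can only re-enter $G_1$ at $v$, and when it does, it must pass through the second corner), which your sketch elides. Second, in the interleaved case, ``one reads off a short non-contractible curve through $v$'' is the crux and is not a one-line observation: the transition corners are now numerous, the curve you draw between two of them need not locally separate all of $G_1$ from all of $G_2$, and it is not obvious which pair of corners shares a face nor why the resulting curve is non-contractible. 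This step needs a real argument (for instance, showing that any such curve bounding a disk would force a connected piece of $G_1\setminus v$ or $G_2\setminus v$ to lie on both sides, using that $v$ is the only shared vertex), and as written it is not established.

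For the second statement there is an additional gap you only gesture at: the $3$-connected components of a $2$-connected graph carry \emph{virtual} edges, so cutting the embedding of $G$ along a separating pair $\{u,w\}$ does not directly produce an embedding of the piece-with-its-virtual-edge. One must first argue that, in an embedding of face-width at least $3$, the vertices $u$ and $w$ of each separating pair lie on a common face of the complementary piece, so the virtual edge can be inserted. Only then do the genus-additivity result for edge-amalgamations and the face-width comparison make sense. This is not a cosmetic adjustment to the block--cut-vertex argument; it is an extra lemma.
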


Let $F(x,y)=xC'(x,y)$ be the series counting vertex-pointed
connected planar graphs (the pointed vertex being labelled). 
Then the first part of
Theorem~\ref{theo:Rob} yields, for $g>0$,
\begin{equation}\label{eq:conn2conn}
C_g^{\fw\geq 2}(x,y)=B_g^{\fw\geq 2}(F(x,y),y).
\end{equation}

A \emph{network}, as defined in~\cite{Wa82a}, is a connected
planar graph with two special vertices, called the poles, 
such that adding an edge connecting the poles gives a
2-connected planar graph. Let $D(x,y)$ be the series counting
networks (the poles are not labelled and not counted by the variable $x$). Then the
second part of Theorem~\ref{theo:Rob} yields, for $g>0$,
\begin{equation}\label{eq:2conn3conn}
B_g^{\fw\geq 3}(x,y)=T_g^{\fw\geq 3}(x,D(x,y)).
\end{equation}

\subsection{Singular expansions}

Again, to genus $g\geq 0$ we associate the quantity
$$
\alpha=\tfrac{5}{2}(1-g).
$$
Our goal is to show that the series enumerating 3-, 2-, and
1-connected graphs of genus $g$ are of approximate-singular order
$\alpha$. 

\subsubsection{3-connected graphs of genus $g$}
We start with 3-connected graphs and use the
result for maps from the previous section. In particular, we use
the fact \cite{RoVi90} (see \cite{mohar92} for an alternative
proof) that if a 3-connected graph of genus $g$ has face-width at
least $2k+3$, then it has a unique embedding in the surface
$\mathbb{S}_g$.

\begin{lemma}[3-connected graphs of genus $g$ and $\fw\geq 3$]\label{lem:Tg}
For $g\geq 0$,  the series $T_g^{\fw\geq 3}(x,w)$ is
approximate-singular of order $\alpha$ around every singular point $(x_0,w_0)$
of~\eqref{eq:systrs}.
\end{lemma}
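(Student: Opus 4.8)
The strategy is to sandwich the series $T_g^{\fw\geq 3}(x,w)$ between two computable series with matching dominant singular behaviour, in the spirit of the "approximate counting" philosophy explained in the introduction. The upper and lower bounds will come from relating 3-connected graphs of genus $g$ to the map family $\cS_g$ of near-irreducible quadrangulations studied in Section~\ref{sec:maps}, whose series $S_g(x,w)$ was shown in Lemma~\ref{lem:Sg} to have a singular expansion of order $\alpha$.

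First I would set up the bijective correspondence between 3-connected graphs embedded in $\mathbb{S}_g$ and a suitable class of rooted maps. Via the classical quadrangulation correspondence recalled in Subsection~\ref{subsec:maps}, a 3-connected map of genus $g$ corresponds (essentially) to a near-irreducible quadrangulation; one has to be careful about low-genus degenerate cases and about the precise dictionary between 3-connectivity of the map and irreducibility of the quadrangulation, but this is standard. The key topological input is the unique-embedding result cited just before the statement: a 3-connected graph of genus $g$ with face-width at least $2k+3$ has a \emph{unique} embedding in $\mathbb{S}_g$, so for graphs of large face-width, counting graphs and counting maps differ only by the rooting (a factor linear in the size, absorbed by passing to derived series) and the edge-labelling bookkeeping already built into the definition of $S_g(x,w)$. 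This gives the lower bound: $T_g^{\fw\geq N}$ for large $N$ injects into (a derived version of) $\cS_g$ plus planar "decorations," and after the critical composition of Corollary~\ref{coro:comp} with the planar quadrangulation/network series, the resulting series has a singular expansion of order exactly $\alpha$. For the upper bound, $T_g^{\fw\geq 3}$ is dominated coefficientwise by the series counting \emph{all} 3-connected maps of genus $g$ (each graph contributes at least one embedding), which is again handled by $S_g$ up to rooting.

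The crux is then to show that the \emph{error term} — the difference between the large-face-width count and the face-width-$\geq 3$ count, i.e.\ 3-connected graphs (or maps) of genus $g$ whose face-width lies in the bounded range $\{3,\dots,N-1\}$ — has a singular expansion of order strictly larger than $\alpha$, hence is asymptotically negligible. This is exactly where Lemma~\ref{lem:Sgci} enters: a quadrangulation of genus $g$ with edge-width $2i$ (equivalently face-width $i$ of the underlying map, by \eqref{eq:fwew}) has a short non-contractible cycle, and cutting along it lands us in genus $g-1$ or splits into two pieces of genera summing to $g$; Lemma~\ref{lem:Sgci} shows the corresponding series is bounded by one of singular order $\alpha + 1/2 > \alpha$. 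Summing over the finitely many values $i < N$ keeps the order at $\alpha+1/2$. Therefore $D(x,w) - E(x,w) \preceq T_g^{\fw\geq 3}(x,w) \preceq D(x,w)$ with $D$ of singular order $\alpha$ and $E$ of singular order $\alpha+1/2$, all with the same singularity function (that of \eqref{eq:systrs}), and strongness is inherited from the strong expansions of $S_g$ and of the planar series through Corollary~\ref{coro:comp}. This is the definition of approximate-singular order $\alpha$.

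The main obstacle I anticipate is not the analytic machinery — which is essentially packaged in Lemmas~\ref{lem:Sg}, \ref{lem:Sgci} and Corollary~\ref{coro:comp} — but the combinatorial bookkeeping in step one: making precise the passage from 3-connected \emph{graphs} to near-irreducible \emph{quadrangulations}, correctly accounting for the rooting/labelling conversion (the $(2m)!$ and $n!$ factors), handling the small sporadic cases (very small genus, the degenerate two-edge quadrangulation excluded from insertions, graphs with automorphisms that survive after rooting), and verifying that the face-width threshold $2k+3$ from the unique-embedding theorem is compatible with — and absorbed by — the $\alpha+1/2$ error bound coming from Lemma~\ref{lem:Sgci}. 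Once the dictionary is pinned down, the two-sided coefficientwise bounds and the transfer via critical composition are routine.
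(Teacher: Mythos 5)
Your proposal captures the paper's core strategy correctly: sandwich $T_g^{\fw\geq 3}(x,w)$ coefficientwise between $S_g$ and $S_g$ minus the low-edge-width contributions, use Lemma~\ref{lem:Sg} for the order-$\alpha$ upper bound, Lemma~\ref{lem:Sgci} for the order-$(\alpha+1/2)$ error, and the unique-embedding theorem for large face-width (the paper uses threshold $2g+3$) to obtain the lower bound. That is exactly the paper's argument.

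However, one piece of your plan is a genuine misstep: you invoke Corollary~\ref{coro:comp} (critical composition) and speak of composing with "the planar quadrangulation/network series" and of "planar decorations." No composition scheme appears in the proof of this lemma. The variables $(x,w)$ of $T_g^{\fw\geq 3}$ are already the variables of $S_g$ --- the paper writes $E_g(x,w)\preceq S_g(x,w)$ and $2T_g^{\fw\geq 2g+3}=S_g-\sum_{i<2g+3}S_g^{C=2i}$ directly, with no change of variable. The critical composition with planar networks $D(x,y)$ is the \emph{next} step (Lemma~\ref{lem:Bgtt}, passing from 3-connected to 2-connected graphs), and you have conflated the two. Similarly, the "rooting/derived series" conversion you worry about does not arise: $S_g$ is set up as an \emph{unrooted}, edge-labelled (equivalently black-vertex-labelled) series, and the passage from 3-connected maps to 3-connected graphs is by the factor-of-$2$ reflection, not by de-rooting. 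The real bookkeeping subtlety in the paper is the identity $S_{n,m}\,n!=\widetilde{S}_{n,m}\,(2m)!$ reconciling edge-labels with vertex-labels, which you do not mention. These are fixable confusions rather than a wrong approach, but if carried out as written your argument would introduce an extraneous composition and a spurious order shift.
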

\begin{proof}
The proof relies again on results by Robertson and
Vitray~\cite{RoVi90}. We use the letter $E$ to denote the series
of embedded 3-connected graphs, that is, 3-connected maps. In this
case we label maps at vertices and use variable $w$ for marking
edges, so that
$$
    E(x,w) = \sum E_{n,m} w^m {x^n \over n!}.
$$
Note that a 3-connected graph yields at least two embeddings (an
embedding and its reflection), hence
$$
2\ \!T_g(x,w)\preceq E_g(x,w).
$$

It is shown in~\cite{RoVi90} that the quadrangulation of a
3-connected map is near-irreducible, hence
$$
E_g(x,w)\preceq S_g(x,w).
$$
There is a technical point here that we clarify. In the previous
section, near-irreducible quadrangulations (counted by the series $S_g$) were
labelled at edges, which correspond to corners of the 3-connected maps. 
But here we are labelling 3-connected maps at vertices, which 
correspond to black vertices in quadrangulations. But in near-irreducible
quadrangulations labelling at edges is equivalent to labelling at black
vertices, in the following sense. If
$S_{n,m}$ is the number of near-irreducible 
quadrangulations with $n$ black vertices and
$m$ faces, labelled at edges with $\{1,2,\dots,2m\}$, and
$\widetilde{S}_{n,m}$ is the number of near-irreducible quadrangulations labelled
at black vertices with $\{1,2,\dots,n\}$, then we have
$$
    S_{n,m} n! = \widetilde{S}_{n,m} (2m)!,
$$
since the previous quantity is the number of quadrangulations
labelled both at vertices and edges. The important point is that
labelling near-irreducible quadrangulations at either black vertices or edges is enough
to avoid symmetries. In order to avoid unnecessary complications,
we use the notation $S_g$ as in Section~\ref{sec:maps}, but taking
into account that $S_g = \sum S_{n,m}x^n w^m/(2m)! = \sum
\widetilde{S}_{n,m} w^m x^n/n!$.

It is also shown in~\cite{RoVi90} that a map is 3-connected of
face-width $k\geq 3$ if and only if  the associated
quadrangulation is irreducible of edge-width $2k$. Hence, for
$k\geq 3$, we have
$$
E_g^{\fw\geq k}=S_g-\sum_{i<k}S_g^{\ew=2i}\succeq S_g-\sum_{i<k}S_g^{C=2i}.
$$
Finally it is shown in~\cite{RoVi90} that if the face-width of a 3-connected graph is at least 
$2g+3$, then the embedding is unique up to reflection, hence
$$
2\ \!T_g^{\fw\geq 2g+3}=E_g^{\fw\geq 2g+3}=S_g-\sum_{i<2g+3}S_g^{C=2i}.
$$
Since
$$
T_g^{\fw\geq 2g+3}\preceq T_g^{\fw\geq 3}\preceq T_g,
$$
we obtain
\begin{equation}
S_g-\sum_{i<2g+3}S_g^{C=2i}\preceq 2\ \!T_g^{\fw\geq 3}\preceq S_g.
\end{equation}
By Lemma~\ref{lem:Sg}, $S_g$ has a singular expansion of order
$\alpha$ around every singular point $(x_0,w_0)$ of~\eqref{eq:systrs}; 
and by Lemma~\ref{lem:Sgci}, for
$i\geq 1$, $\Sgci$ is bounded coefficient-wise by a series $P_g$
that admits a singular expansion of order $\alpha+1/2$ around
$(x_0,w_0)$.  Hence $T_g^{\fw\geq 3}$ is of
approximate-singular order $\alpha$ at $(x_0,w_0)$.
\end{proof}

\subsubsection{2-connected graphs of genus $g$}

\begin{lemma}[2-connected graphs of genus $g$ and $\fw\geq 3$]\label{lem:Bgtt}
For $g\geq 0$, the series $B_g^{\fw\geq 3}(x,y)$  is of strong
approximate-singular order $\alpha$ around every singular  point
$(x_0,y_0)$ of the series $D(x,y)$ of planar networks.
\end{lemma}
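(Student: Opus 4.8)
The plan is to transport the approximate-singular order $\alpha$ of $T_g^{\fw\geq 3}(x,w)$ (Lemma~\ref{lem:Tg}) across the composition scheme \eqref{eq:2conn3conn}, namely $B_g^{\fw\geq 3}(x,y)=T_g^{\fw\geq 3}(x,D(x,y))$, using Corollary~\ref{coro:comp}. To do this I first need to verify the hypotheses of that corollary for the \emph{planar} inner series $D(x,y)$. From the Giménez--Noy analysis of planar graphs, the network series $D(x,y)$ is known to have a square-root singular expansion; more precisely, one should check that $w\mapsto D(x,w)$ (equivalently, viewing $x$ as the secondary parameter) has a log-free strong singular expansion of order $3/2$ around the relevant singular point, with some singularity function $\rho_D$, and that $D(x,y)$ evaluated at its singular point equals the value $w_0$ that is a singular point of the system \eqref{eq:systrs} appearing in Lemma~\ref{lem:Tg}. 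This criticality — that the singularity of the outer 3-connected series is reached exactly when the inner network series becomes singular — is what makes the composition ``critical'' in the sense of Corollary~\ref{coro:comp}, and it is the combinatorial heart of the matter: 3-connected components of a random large 2-connected planar graph occupy a linear fraction of the vertices, which is precisely the statement that the composition is critical. This fact is established in \cite{gimeneznoy}, so I would cite it rather than reprove it.

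The second point to settle is the order bookkeeping. Corollary~\ref{coro:comp} is stated for an inner series of order exactly $3/2$ and produces an outer approximate-singular order equal to that of the outer factor $C(t,u)$; here the outer factor is $T_g^{\fw\geq 3}$, which by Lemma~\ref{lem:Tg} is of strong approximate-singular order $\alpha=\tfrac52(1-g)$. Since $\alpha\le 0$ for $g\geq 1$, Corollary~\ref{coro:comp} applies directly and gives that $B_g^{\fw\geq 3}(x,y)=T_g^{\fw\geq 3}(x,D(x,y))$ is of strong approximate-singular order $\alpha$ around the singular point $(x_0,y_0)$ with singularity function $\rho_D$. For the planar case $g=0$ one has $\alpha=5/2>0$ and the argument is different, but $B_0^{\fw\geq 3}=B_0$ is the genuine planar 2-connected series already handled exactly in \cite{gimeneznoy}, so the case $g=0$ can be dispatched by reference; I would phrase the statement so that the interesting content is $g\geq 1$.

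The main obstacle I anticipate is not the composition step itself but verifying cleanly that the hypotheses of Corollary~\ref{coro:comp} hold for $D(x,y)$ as a bivariate object: one must know that $D$ has a \emph{strong} (analytically continuable) log-free square-root expansion around each singular point, with singularity function analytically continuable and with non-vanishing derivative, and that the singular point of $D$ matches the singular point $w_0$ of the system \eqref{eq:systrs} governing $T_g^{\fw\geq 3}$. All of these are consequences of the Giménez--Noy planar analysis together with the identification (via the quadrangulation correspondence) of the relevant singularities; but assembling the precise statements requires care, in particular checking that the variable playing the role of ``$t$'' in Corollary~\ref{coro:comp} is $w=D(x,y)$ and that $x$ plays the role of the secondary variable ``$u$'' — a mild notational inversion relative to Section~\ref{sec:maps} which the paper's convention (Lemma~\ref{lem:exch}, allowing either variable to be the leading one) makes harmless. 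Once the dictionary between the two sets of notations is fixed, the proof is a one-line application of Corollary~\ref{coro:comp}.
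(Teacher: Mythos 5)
Your proposal is correct and follows essentially the same route as the paper: equation \eqref{eq:2conn3conn} plus Corollary~\ref{coro:comp}, with the criticality of the $2$-connected/$3$-connected composition and the log-free order-$3/2$ expansion of $D(x,y)$ supplying the hypotheses (the paper cites Bender--Gao~\cite{BeGa} for those two facts, but the Gim\'enez--Noy analysis is an equally valid source). You are in fact slightly more careful than the paper in flagging that equation \eqref{eq:2conn3conn} and Lemma~\ref{lem:comp} with $\alpha\le0$ really only cover $g\geq 1$, with $g=0$ dispatched by direct reference to \cite{gimeneznoy}.
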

\begin{proof}
By Equation~(\ref{eq:2conn3conn}) we have $B_g^{\fw\geq
3}(x,y)=T_g^{\fw\geq 3}(x,D(x,y))$. Bender et al.~\cite{BeGa} have
shown that, if $(x_0,y_0)$ is a singular point of $D(x,y)$, then
$(x_0,w_0)$
---where $w_0=D(x_0,y_0)$--- is a singular point
of~\eqref{eq:systrs}, i.e., is a singular point of $T_g^{\fw\geq
3}(x,w)$. Hence the composition scheme is critical. In addition,
as proved in~\cite{BeGa}, $D(x,y)$ admits a log-free singular expansion of
order $3/2$ around $(x_0,y_0)$. Hence by
Corollary~\ref{coro:comp}, $B_g^{\fw\geq 3}(x,y)$ is of strong 
approximate-singular order $\alpha$ around every singular  point
$(x_0,y_0)$ of $D(x,y)$.
\end{proof}

\begin{lemma}[2-connected graphs of  genus $g$ and  $\fw\geq 2$]\label{lem:Bgt}
For every singular  pair $(x_0,y_0)$ of $D(x,y)$, the series
$B_g^{\fw= 2}(x,y)$ is bounded coefficient-wise by a series
of strong singular order $\alpha+1/2$ at $(x_0,y_0)$.

And the series
$B_g^{\fw\geq 2}(x,y)$  is of strong approximate-singular order
$\alpha$ around $(x_0,y_0)$.
\end{lemma}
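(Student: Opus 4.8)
The plan is to split $B_g^{\fw\geq 2}$ into the disjoint union $B_g^{\fw\geq 3}\sqcup B_g^{\fw=2}$, to invoke Lemma~\ref{lem:Bgtt} for the first part, and to prove that the second part is dominated coefficientwise by a series of strong singular order $\alpha+\tfrac12$ at $(x_0,y_0)$ --- which is exactly the first assertion of the lemma. The second assertion then follows formally: if $\mathcal D-\mathcal E\preceq B_g^{\fw\geq 3}\preceq\mathcal D$ with $\mathcal D$ of strong singular order $\alpha$ and $\mathcal E$ of strong singular order $\beta>\alpha$, and $B_g^{\fw=2}\preceq\mathcal F$ with $\mathcal F$ of strong singular order $\alpha+\tfrac12$, then $\mathcal D+\mathcal F$ has strong singular order $\alpha$, $\mathcal E+\mathcal F$ has strong singular order $\min(\beta,\alpha+\tfrac12)>\alpha$, these two share the singularity function of $D(x,y)$, and $(\mathcal D+\mathcal F)-(\mathcal E+\mathcal F)=\mathcal D-\mathcal E\preceq B_g^{\fw\geq 3}\preceq B_g^{\fw\geq 2}\preceq\mathcal D+\mathcal F$, so $B_g^{\fw\geq 2}$ is of strong approximate-singular order $\alpha$.

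To get the coefficientwise bound on $B_g^{\fw=2}$ I would follow the route of Lemmas~\ref{lem:Bgtt} and~\ref{lem:Tg}. By Tutte's decomposition together with additivity of the genus over $3$-connected components, a $2$-connected graph of genus $g$ has a unique $3$-connected component of genus $g$; by Theorem~\ref{theo:Rob} this component has face-width $\geq 3$ precisely when $G$ does, so a $2$-connected graph of genus $g$ with $\fw\leq 2$ is obtained from a $3$-connected graph of genus $g$ with $\fw\leq 2$ by substituting planar networks in the edges. As in Equation~\eqref{eq:2conn3conn} this gives
$$B_g^{\fw=2}(x,y)\ \preceq\ B_g^{\fw\leq 2}(x,y)\ =\ T_g^{\fw\leq 2}(x,D(x,y)).$$
Now $T_g^{\fw\leq 2}$ is controlled exactly as in the proof of Lemma~\ref{lem:Tg}: from $2T_g\preceq S_g$ and $2T_g^{\fw\geq 3}\succeq 2T_g^{\fw\geq 2g+3}=S_g-\sum_{i<2g+3}S_g^{C=2i}$ one obtains $2T_g^{\fw\leq 2}\preceq\sum_{i<2g+3}S_g^{C=2i}$, and by Lemma~\ref{lem:Sgci} each $S_g^{C=2i}$ is dominated by a series of (strong) singular order $\alpha+\tfrac12$ around every singular point of~\eqref{eq:systrs}. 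Finally, $D(x,y)$ has a log-free strong singular expansion of order $3/2$ around $(x_0,y_0)$, and by~\cite{BeGa} the point $(x_0,w_0)$ with $w_0=D(x_0,y_0)$ is a singular point of~\eqref{eq:systrs}, so the composition is critical; Corollary~\ref{coro:comp} then shows that $T_g^{\fw\leq 2}(x,D(x,y))$, and hence $B_g^{\fw=2}$, is dominated coefficientwise by a series of strong singular order $\alpha+\tfrac12$ at $(x_0,y_0)$.

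I expect the main obstacle to be the very first step of the previous paragraph: passing from the $\fw\geq 3$ description of $B_g^{\fw\geq 2}$, which is exactly Theorem~\ref{theo:Rob}, to the $\fw\leq 2$ range --- that is, establishing additivity of the genus over Tutte's $3$-connected components and checking that substituting (planar) networks in the edges of a $3$-connected map cannot raise its face-width from $\leq 2$ to $\geq 3$ nor lower it. Should that be delicate, the fallback is a direct argument à la Lemma~\ref{lem:Sgci}: a graph counted by $B_g^{\fw=2}$ carries an embedding of face-width $2$, hence a non-contractible simple closed curve $C$ through exactly two vertices; cutting $\mathbb S_g$ along $C$ and capping the boundary circle(s) with disks carrying special vertices (the operation $\Phi$ of Section~\ref{sec:maps}) expresses $B_g^{\fw=2}$, up to a bounded multiplicity and composition with planar series, in terms of a doubly derived series of connected graphs of genus $g-1$ (non-separating case) or products of two singly derived series of $2$-connected graphs whose genera add up to $g$ (separating case). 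The order is then $\alpha+\tfrac12$ by the bookkeeping of Claim~B in Lemma~\ref{lem:Sgci} --- lowering the genus by one contributes $+\tfrac52$, each derivative contributes $-1$ (Lemma~\ref{lem:integrate}), so the cut costs $-2$ --- using an induction on the genus, Lemma~\ref{lem:sing_prod}, and Corollary~\ref{coro:comp}; the relabelling, the degenerate torus case, and the strongness of the bounding series are then routine, exactly as in Lemma~\ref{lem:Sgci}.
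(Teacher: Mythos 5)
Your primary route has a genuine gap. You assert that ``by Tutte's decomposition together with additivity of the genus over $3$-connected components, a $2$-connected graph of genus $g$ has a unique $3$-connected component of genus $g$,'' and then that $B_g^{\fw=2}\preceq T_g^{\fw\leq 2}(x,D(x,y))$. But Theorem~\ref{theo:Rob} guarantees the unique genus-$g$ $3$-connected component \emph{only} when the face-width is at least $3$, and the hypothesis is used essentially: in the range $\fw\leq 2$ a $2$-connected graph of genus $g$ can have several non-planar $3$-connected components (genus is not additive over Tutte's $3$-connected decomposition, only over blocks), so the asserted domination by a substitution scheme over a single genus-$g$ $3$-connected skeleton does not hold. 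This is not a point one can wave away; it is precisely why the paper does not reduce $B_g^{\fw=2}$ to $3$-connected graphs at all.

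Your fallback is the right idea and is essentially what the paper does: induct on $g$, take an embedding of face-width $2$ and a non-contractible curve $C$ through two vertices, cut along $C$, and bound $B_g^{\fw=2}$ by series of strictly smaller genus, giving order $\alpha+\tfrac12$. Three details need more care, though. First, after cutting, the paper shows that the resulting piece(s) are again \emph{$2$-connected of face-width $\geq 2$}, via the simple-quadrangulation correspondence (``$Q$ simple $\Rightarrow Q'$ simple $\Rightarrow M'$ $2$-connected of $\fw\geq 2$''); this is what keeps the induction internal to Lemma~\ref{lem:Bgt}. You instead invoke ``connected graphs of genus $g-1$,'' whose approximate-singular order is only established later in Lemma~\ref{lem:Cg}, which in turn depends on the present lemma --- so as written this would need a joint induction across several lemmas rather than the self-contained one the paper uses. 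Second, $C$ passes through two vertices, each of which is split into two copies, so naively one must mark four vertices to make the cutting map injective, costing $-4$ in singular order and ruining the bound; the paper's device of adding a \emph{directed edge} between the two split vertices in each cylindric tip (two $y$-derivatives in total, cost $-2$) is what makes the count come out to $\alpha+\tfrac12$. Your ``doubly derived series,'' if read as two marked vertices, does not encode enough to reconstruct $G$; if read as two marked directed edges, it is the paper's trick, but that should be said explicitly, and it is not the operation $\Phi$ of Section~\ref{sec:maps} (which adds a special vertex and a marked incident edge in each disk). Third, no ``composition with planar series'' is needed here: unlike Lemma~\ref{lem:Sgci}, where nested non-contractible $2$- and $4$-cycles must be peeled off with the auxiliary classes $\tcSot,\tcSof$, here the single cut already lands in the $2$-connected, $\fw\geq 2$ class, so there is nothing to compose.

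Your derivation of the second assertion from the first (via $B_g^{\fw\geq 2}=B_g^{\fw\geq 3}+B_g^{\fw=2}$ and Lemma~\ref{lem:Bgtt}) is correct and matches the paper.
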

\begin{proof}
The proof works by induction on $g$. The first assertion is clearly true for $g=0$
 since the face-width is infinite by convention. The second assertion has
been shown by Gim\'enez and Noy for planar graphs in~\cite{gimeneznoy}.

Let $g>0$, and assume the property is true up to $g-1$.
Let $\cE_g$ be the family of well-labelled graphs with two connected components:
the first one in $\cB_g^{\fw= 2}$, the second one an oriented edge
(the second component is only an artifact to have a reserve of two labelled vertices).
Note that the series of $\cE_g$ is
$$
E_g(x,y)=x^2y\ \!\!B_g^{\fw= 2}(x,y)
$$
Let $\cF_g$ be the family of graphs  
with two ordered connected components of positive 
genera adding up to $g$, each connected component being a 2-connected
graph of face-width at least $2$ with additionally a marked directed edge. Note that the series of $\cF_g$ satisfies
$$
F_g(x,y)=\sum_{\substack{g_1+g_2=g\\ g_1>0,g_2>0}} 2y\frac{\partial}{\partial y}B_{g_1}^{\fw\geq 2}(x,y)\cdot 2y\frac{\partial}{\partial y}B_{g_2}^{\fw\geq 2}(x,y).
$$
And let $\cH_g$ be the family of 2-connected graphs of genus at most $g$
and face-width at least $2$ and having two (a first and a second) marked directed 
edges. Note that the series of $\cH_g$ satisfies
$$
H_g(x,y)=\sum_{h\leq g}\frac{\partial^2}{\partial y^2}B_g^{\fw\geq 2}.
$$
We are going to show an injection from $\cE_g$ to $\cF_g+\cH_{g-1}$
(since $\cF_g$ and $\cH_{g-1}$ only involve family of 2-connected graphs
of smaller genus, this will allow us to conclude the proof by induction).

Let $(G,e)\in\cE_g$. Let $M$ be an embedding of $G$ of face-width $2$
and let $C$ be a non-contractible cycle intersecting $M$ at two vertices
$v$ and $v'$, $C$ being additionally directed. Cut the surface along $C$
as in Figure~\ref{fig:cuts}, making two cylindric ends appear (one on the left of $C$,
on one the right of $C$). In the tip of each cylindric end, add an edge directed 
from $v$ to $v'$. So $v$ ($v'$, resp.) is split into two vertices: 
the one on the left cylindric end keeps the label of $v$ (of $v'$, resp.), the one on the 
right cylindric end receives the label of the origin (end, resp.) of the edge-component
$e$~\footnote{The edge-component 
is only used to make the resulting graph well-labelled.}.
Let $M'$ be the resulting
embedded graph and $G'$ the underlying graph.
Note that if $C$ is surface-separating $G'$ has two ordered connected components
(the first one contains the left cylindric end) 
of positive genera adding up to $g$, 
each connected component having additionally a marked directed edge.
If $C$ is not surface-separating, $G'$ has one connected components
of genus at most $g-1$ and has additionally two marked directed edges.
Clearly the mapping from $(G,e)$ to $G'$ is injective, since $G$ is recovered
from $G'$ by identifying the two marked directed edges: the merged edge
receives the labels of the first marked edge, and $e$ receives the labels of the
second marked edge.

We now claim that the image of $\cE_g$ is included in $\cF_g+\cH_{g-1}$,
i.e., that the connected components of the graph associated with $G\in\cE_g$
are 2-connected of face-width at least $2$. This is best seen by going 
to quadrangulations. Take the case where $C$ is not surface-separating
(the other case works similarly). With the notations above, let $Q$
be the quadrangulation associated with $M$, and $Q'$ the quadrangulation
associated with $M'$. The effect of cutting along $C$
is the same as cutting along a 4-cycle $c$ of $Q$, so $Q'$ is obtained
from $Q$ by cutting along $c$. 
Recall that simple bipartite quadrangulations of genus $g$ exactly correspond
to 2-connected maps of face-width at least $2$. 
Since $M$ is 2-connected of face-width at least
$2$, $Q$ is a simple quadrangulation, so $Q'$ is also a simple quadrangulation,
so $M'$ is a 2-connected map of face-width at least $2$ (and genus $g-1$).
Since $M'$ has face-width at least $2$ and genus $g-1$, $G'$ has face-width
at least $2$ and genus at most $g-1$.

Thanks to the injective mapping we have constructed it is an easy matter to conclude the proof by induction. 
By injectivity we have
$$
x^2yB_g^{\fw= 2}(x,y)=E_g(x,y)\preceq F_g(x,y)+H_{g-1}(x,y).
$$
By induction, each series $B_h^{\fw\geq 2}(x,y)$ for $h<g$ is of strong approximate-singular order
$5/2(1-h)$ at every singular point $(x_0,y_0)$ of $D(x,y)$. Hence the partial
derivative according to $y$ of such a series is of 
order $3/2-5/2h$. Hence, for $g_1+g_2=g$, the product of such two series
is of order $3-5/2g$, so $F_g(x,y)$ is also of order $3-5/2g=\alpha+1/2$.
Similarly, $H_{g-1}(x,y)$ is of order $\alpha+1/2$, so the sum of $F_g$ and $H_{g-1}$,
which dominates $B_g^{\fw= 2}$, is of order $\alpha+1/2$. 
Finally, the series $B_g^{\fw\geq 2}=B_g^{\fw\geq 3}+B_g^{\fw=2}$ is the sum
of two series respectively of strong approximate-singular orders $\alpha$ (according to Lemma~\ref{lem:Bgtt}) and $\alpha+1/2$ (as just shown), so the second term
is negligible and  $B_g^{\fw\geq 2}$ is thus of
strong approximate-singular order $\alpha$. 
\end{proof}

Even if only Lemma~\ref{lem:Bgt} is needed to do the asymptotics of connected graphs
of genus $g$, we add here for the sake of completeness a lemma (bounding the series of
2-connected graphs of face-width~$1$) in order to have a theorem on the 
asymptotic enumeration of 2-connected graphs of genus $g$ (Theorem~\ref{theo:2conn}
at the end of this section). 

\begin{lemma}[2-connected graphs of  genus $g$]\label{lem:Bg}
For every singular  pair $(x_0,y_0)$ of $D(x,y)$, the series
$B_g^{\fw= 1}(x,y)$ is bounded coefficient-wise by a series
of strong singular order $\alpha+1/2$ at $(x_0,y_0)$.

And the series
$B_g^{\fw\geq 1}(x,y)=B_g(x,y)$  is of strong approximate-singular order
$\alpha$ around $(x_0,y_0)$.
\end{lemma}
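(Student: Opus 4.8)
The plan is to mimic the proof of Lemma~\ref{lem:Bgt}, but now dealing with face-width exactly $1$ rather than exactly $2$. The situation for $\fw=1$ is more delicate because a graph of face-width $1$ has an embedding with a non-contractible curve meeting the graph at a single vertex; cutting along such a curve separates the surface at that vertex into one or two pieces of strictly smaller genus, each being a connected (not necessarily 2-connected) graph sharing that one vertex. Since we want to bound $B_g^{\fw=1}$ by families of \emph{2-connected} graphs of smaller genus — so as to invoke the inductive hypothesis — the first step is to reduce from a 2-connected graph $G$ of face-width $1$ to its 3-connected components via Tutte's decomposition. The key topological fact (again from Robertson--Vitray \cite{RoVi90}, or the reduction to quadrangulations as used in Lemma~\ref{lem:Bgt}) is that if $G$ is 2-connected with $\fw(G)=1$, then all of the genus is carried by one of its 3-connected components, and that component also has small face-width. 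Concretely, a 2-connected map of face-width $1$ corresponds, via the quadrangulation correspondence, to a quadrangulation of genus $g$ with a contractible 2-cycle homotopic to a non-contractible curve of length~$1$ — i.e.\ a near-simple quadrangulation that fails to be simple in a controlled way — and collapsing the contractible 2-cycles pushes the genus down into the near-simple core.

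The main steps, in order, would be: (i) reduce $B_g^{\fw=1}$ to the series counting 2-connected graphs of genus $g$ whose unique non-planar 3-connected component has face-width $1$ (for $g>0$; for $g=0$ the face-width is $+\infty$ by convention, so $B_0^{\fw=1}=0$ and the claim is vacuous); (ii) via \eqref{eq:2conn3conn}-type substitution $B_g^{\fw=1}(x,y)=T_g^{\fw=1}(x,D(x,y))$ and the criticality of the composition (Bender et al.\ \cite{BeGa}, as used in Lemma~\ref{lem:Bgtt}), transfer the problem to the singular order of $T_g^{\fw=1}(x,w)$ around singular points of \eqref{eq:systrs}; (iii) show $T_g^{\fw=1}$, and in fact $E_g^{\fw=1}$, is bounded coefficientwise by a series of singular order $\alpha+1/2$. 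For step (iii), translate to quadrangulations: a 3-connected map of face-width $1$ has a quadrangulation that is simple but has edge-width exactly $2$, equivalently has a short non-contractible cycle; cutting along the corresponding 2-cycle (as in the cutting operation $\Phi$ of Section~\ref{sec:maps}, but now with a cycle of length $2$ rather than $2i\ge 2$) reduces the genus, and the resulting pieces are dominated by series of the form $S_{g-1}''$ or $S_{g_1}'S_{g_2}'$, decorated by planar pieces $\tSof,\tSot$ — exactly the structure $K(x,w)\cdot(\tSot+\tSof+1)^2$ appearing in the proof of Lemma~\ref{lem:Sgci}. That lemma (with $i=1$) already shows this product has singular order $\alpha+1/2$. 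Finally, $B_g^{\fw\ge 1}=B_g^{\fw\ge 2}+B_g^{\fw=1}$ is the sum of a series of strong approximate-singular order $\alpha$ (Lemma~\ref{lem:Bgt}) and one of order $\alpha+1/2$, so the former dominates and $B_g=B_g^{\fw\ge 1}$ is of strong approximate-singular order $\alpha$.

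The main obstacle I anticipate is step (i): making precise, at the level of Tutte's 3-connected-component tree and the associated network substitution, that a 2-connected graph of genus $g$ and face-width $1$ really does have \emph{all} its genus concentrated in a single 3-connected component of face-width $1$, and that the other components are planar networks counted by $D(x,y)$. Theorem~\ref{theo:Rob} as quoted only covers $\fw\ge 3$ for the 2-connected/3-connected step, so one needs the finer statement for $\fw=1$ — either by citing the appropriate strengthening in \cite{RoVi90}, or by arguing directly through the quadrangulation correspondence (collapsing contractible 2-cycles, as recalled around \eqref{eq:QR}), where the genus-concentration statement is transparent because collapsing a contractible 2-cycle cannot change the genus and an irreducible/near-simple core of genus $g$ must itself carry the topology. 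Once that structural reduction is in place, steps (ii) and (iii) are essentially applications of Corollary~\ref{coro:comp} and of Lemma~\ref{lem:Sgci} with $i=1$, respectively, and the singular-order bookkeeping is identical to that in Lemmas~\ref{lem:Bgtt} and~\ref{lem:Bgt}.
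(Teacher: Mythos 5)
Your route is genuinely different from the paper's and, as you yourself anticipate, the gap at step (i) is fatal: the factorisation $B_g^{\fw=1}(x,y)=T_g^{\fw=1}(x,D(x,y))$ has no basis. Theorem~\ref{theo:Rob}, as quoted, gives genus concentration in a single 3-connected component only for face-width $\geq 3$, and for face-width $1$ there is no such structural statement to fall back on. Indeed, a non-contractible curve meeting a 2-connected $G$ in a single vertex $v$ is perfectly free to ``enter'' one 3-connected component and ``exit'' through another across a $2$-separation at $v$, so the topology need not sit in one Tutte piece; nothing forces the face-width-$1$ phenomenon to be visible at the $T_g$ level at all. Your step~(iii) then compounds the problem with an internal contradiction: you write that the quadrangulation of a 3-connected map of face-width $1$ is ``simple but has edge-width exactly $2$''. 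By \eqref{eq:fwew}, face-width $1$ forces $\ew(Q)=2$, i.e.\ a $2$-cycle, so $Q$ is \emph{not} simple, hence \emph{not} near-irreducible, and the bounds from Lemma~\ref{lem:Sgci} (which live entirely inside $\cS_g$) cannot be invoked for such a $Q$. The same observation shows that the inequality $E_g\preceq S_g$ used in Lemma~\ref{lem:Tg} is compatible with a picture in which $T_g^{\fw=1}$ is vacuous, so even taken at face value your composition would produce $0$, which is plainly not what $B_g^{\fw=1}$ equals.

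The paper takes a much more elementary and robust route that never passes through $T_g$ or quadrangulations. It cuts the 2-connected graph $G$ \emph{directly} at the face-width-$1$ vertex $v$ (the surface-separating case being excluded by $2$-connectivity), yielding a connected graph $G'$ of genus $\leq g-1$ with two marked vertices. A purely combinatorial claim (Claim~A of the paper's proof) then shows that the block-tree of $G'$ is a \emph{chain} $B_1,\ldots,B_k$ between the two marked vertices. This realises $B_g^{\fw=1}$ inside a sum, over compositions $\kappa=i_1+\cdots+i_j$ of integers $<g$, of series of the form
$$
B_{i_1}''\cdots B_{i_j}''\cdot\Bigl(\mathrm{Seq}\text{-series for planar blocks}\Bigr)^{j+1},
$$
and the induction on $g$ controls the $B_{i_\ell}''$. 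The delicate analytic point (Claim~B of the paper's proof) is that the $\mathrm{Seq}(\cZ\star\cB_0'')$ factor remains of positive singular order, which is established via the identity $F(z,y)=z\exp\!\bigl(B_0'(F(z,y),y)\bigr)$ and the criticality of the $2$-connected/connected planar scheme from~\cite{gimeneznoy} to show that $1/(1-xB_0''(x,y))$ is finite at the singular point. Your sketch has no analogue of this step, and without it there is no way to bound the chain. The concluding paragraph of your proposal (combining $B_g^{\fw\geq 2}$ of order $\alpha$ with $B_g^{\fw=1}$ of order $\alpha+1/2$) matches the paper's conclusion, but the body of the argument, and in particular the first assertion, does not hold as written.
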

\begin{proof}
Similarly as in Lemma~\ref{lem:Bgt} we reason by induction on $g$.
The stated assertions are true for $g=0$ (the first assertion is trivial, the second
assertion is proved in~\cite{gimeneznoy}).

Let $g>0$, and assume the stated properties are true up to genus $g-1$.
Given a 2-connected graph $G$ of genus $g$ and face-width $1$, let $M$
be an embedding of $G$ on $\mathbb{S}_g$ with face-width $1$,
and let $C$ be a non-contractible 
closed curve intersecting $M$ at a single vertex, call it $v$.
Cutting along $C$ splits $v$ into two ``marked'' vertices and yields 
a connected graph $M'$ 
embedded on  $\mathbb{S}_{g-1}$ and with two marked vertices (the case
of $C$ surface-separating is excluded, since it would contradict the 
2-connectivity of $G$).
Let $G'$ be the underlying connected graph, so $G'$ has genus at most $g-1$.
Note that even after forgetting the embedding, $G$ can be recovered from $G'$
by merging the 2 marked vertices of $G'$. So the mapping from $G$ to $G'$ and its two marked vertices
is injective (upon using a similar artifact as in Lemma~\ref{lem:Bgt} to make $G'$
well-labelled).  
The graph $G'$ is not necessarily 2-connected, but close to. We use the following
easy claim:

\medskip

\noindent{\bf Claim A.} Let $G$ be a 2-connected graph, let $v$ be a
vertex of $G$ and $(E_1,E_2)$ a partition of the edges incident to $v$
into 2 non-empty parts. Let $G'$ be the graph obtained from $G$
by splitting $v$ into 2 vertices $v_1$ and $v_2$, $v_i$ keeping as incident
edges the edges in $E_i$ for $i\in\{1,2\}$. Then the block-structure of $G'$ 
is a chain of 2-connected components $B_1,\ldots,B_k$, i.e., each $B_i$ has
two distinct marked vertices $w_i,w_i'$ such that $w_1=v_1$, $w_k=v_2$,
and $w_i'=w_{i+1}$ for $i\in[1..k-1]$.  

\medskip

Hence the graph $G'$ obtained from $G$ is a chain of bi-pointed 2-connected
graphs with sum of genera strictly smaller than $g$ (because the genus
of a graph is the sum of genera of its blocks). For each composition $\kappa$ of 
$k\leq g-1$ as a sum of positive integers $i_1,\ldots,i_j$, let $\cS^{(\kappa)}$
be the family of connected graphs $G'$ made of a chain of blocks,
so that the  blocks of positive genera in the chain have genera $i_1,\ldots,i_j$
and there are two unlabelled marked vertices in $G'$ contained respectively
in the first and in the last block of the chain.
Note that (with $\cZ$ marking a labelled vertex, $\mathrm{Seq}$ denoting
the sequence construction, and $\star$ denoting the labelled product in the sense of~\cite{FlaSe}) 
$$
\cS^{(\kappa)}=\cB_{i_1}\ \!\!\!''\star\cdots\star\cB_{i_j}\ \!\!\!''\star\mathrm{Seq}(\cZ\star\cB_0\ \!\!\!'')^{j+1}.
$$

\medskip

\noindent{\bf Claim B.} Let $\kappa=i_1+\cdots+i_j$
be a composition of $k<g$ by positive integers.
Then the generating series $S^{(\kappa)}(x,y)$ of $\cS^{(\kappa)}$
is of strong approximate-singular order $j/2-5/2k$ at every singular
point $(x_0,y_0)$ of $D(x,y)$.  

\medskip

\noindent\emph{Proof of the claim.} By induction,  for $h<g$, 
each series $B_h\ \!\!\!''(x,y)$ is of strong approximate-singular order
$1/2-5/2h$ at $(x_0,y_0)$. Hence, by Lemma~\ref{lem:sing_prod},
 $B_{i_1}\ \!\!\!''(x,y)\cdots B_{i_j}\ \!\!\!''(x,y)$
is of strong approximate-singular order $j/2-5/2k$. 
To conclude the proof of the claim we just have 
to show that the series $S(x,y)$ of $\mathrm{Seq}(\cZ\star\cB_0)''$ is of (strong) 
positive approximate-singular order; we show it is of order $1/2$. 
Since $B_0(x,y)$ is of order $5/2$ at $(x_0,y_0)$, $B_0''(x,y)$ is of order
$1/2$, so $1/(1-xB_0''(x,y))$ is either of order $1/2$ (if $x_0B_0''(x_0,y_0)=1$)
or is of order $-1/2$ (if $x_0B_0''(x_0,y_0)<1$), so proving that $S(x,y)$
is of order $1/2$ reduces to proving that $1/(1-x_0B_0''(x_0,y_0))$ is finite. 
Recall the equation 
$$
F(z,y)=z\exp(B_0\ \!\!\!'(F(z,y),y))
$$
relating the series $F(z,y)$ of pointed connected planar graphs and the series
$B_0(x,y)$ of 2-connected planar graphs. Differentiating this equation with respect to $z$ yields
$$
F'(z,y)=F(z,y)F'(z,y)B_0\ \!\!\!''(F(z,y),y)+F(z,y)/z,
$$
so
$$
F'(z,y)=\frac{x}{z}\frac1{1-xB_0\ \!\!\!''(x,y)},\ \ \mathrm{where}\ x=F(z,y).
$$
Let $z_0$ be the positive value such that 
$(z_0,y_0)$ is a singular point of $F(z,y)$. As proved in~\cite{gimeneznoy}, 
the composition scheme 
from 2-connected to connected planar graphs is critical, i.e., $x_0=F(z_0,y_0)$. 
Moreover, it is shown in~\cite{gimeneznoy} that
$F(z,y)$ is of singular order $3/2$ at $(x_0,y_0)$,
so $F'(z,y)$ is of order $1/2$. Hence $F'(z,y)$ converges at $(z_0,y_0)$,
which implies that $1/(1-xB_0\ \!\!\!''(x,y))$ converges at $(x_0,y_0)$, and thus
is of singular order $1/2$. Observe that the same argument shows that the denominator 
$1-xB_0\ \!\!\!''(x,y)$ does not vanish before we reach the singular point $(x_0,y_0)$. This concludes the proof of the claim.
 \qedclaim

The injection presented in the beginning of the proof of the lemma ensures that
$B_g^{\fw=1}(x,y)$ is bounded coefficientwise by the sum of series $B^{(\kappa)}(x,y)$,
where $\kappa$ runs over compositions of integers strictly smaller than $g$.
By Claim~B, 
each series $B^{(\kappa)}(x,y)$ is of strong approximate-singular order at least
$1/2-5/2(g-1)=\alpha+1/2$ (the least singular order of $B^{(\kappa)}(x,y)$
is attained for $\kappa$ the composition of $g-1$ with a single part). 
This completes the proof of the first assertion of the lemma.

The second assertion follows straightforwardly. The series
$B_g^{\fw\geq 1}(x,y)=B_g(x,y)$ is the sum of $B_g^{\fw\geq 2}(x,y)$
and $B_g^{\fw=1}$, the first term being of strong
approximate-singular order $\alpha$ by Lemma~\ref{lem:Bgt}, 
and the second term being
bounded coefficientwise by a series of strong approximate-singular order
$\alpha+1/2$, hence of negligible contribution. Hence $B_g(x,y)$
is also of strong approximate-singular order $\alpha$ at $(x_0,y_0)$. 
\end{proof}

Applying the transfer theorems stated in Section~\ref{sec:sing} to the 
series counting 2-connected graphs of genus $g$, we obtain:
 
\begin{theorem}\label{theo:2conn}
For $g\geq 0$, the number $b_n^{(g)}$ of 2-connected graphs with $n$ vertices
that can be embedded in the orientable surface  $\mathbb{S}_g$ of
genus $g$ satisfies
\begin{equation}\label{eq:asympt-2conn}
b_n^{(g)} \sim \,d^{(g)}n^{5(g-1)/2-1}\gamma^n n!
\end{equation}
where $d^{(g)}$ is a positive constant and $\gamma$ is the exponential
growth constant of 2-connected planar graphs.

For $\mu\in(1,3)$, the number $b_{n,m}^{(g)}$ of graphs with $n$
vertices and $m=\lfloor \mu n \rfloor$ edges that can be embedded in
$\mathbb{S}_g$ satisfies
$$
b_{n,m}^{(g)} \sim \, d_{\mu}^{(g)}n^{5g/2-4}(\gamma_{\mu})^n n!\ \
when \ n\to\infty,
$$
where $d_{\mu}^{(g)}$ is a positive constant and $\gamma_{\mu}$ is
the exponential growth constant of planar graphs with ratio edges/vertices
tending to $\mu$ ($\gamma$ and 
$\gamma(\mu)$ are characterized analytically in~\cite{gimeneznoy}).
\end{theorem}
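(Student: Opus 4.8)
We obtain the theorem as a transfer-theorem consequence of Lemma~\ref{lem:Bg}; here is the plan. By Lemma~\ref{lem:Bg}, $B_g(x,y)$ is of strong approximate-singular order $\alpha=\tfrac{5}{2}(1-g)$ around every singular point $(x_0,y_0)$ of the planar network series $D(x,y)$, and its singularity function $\rho(y)$ coincides with that of $D(x,y)$ --- the latter because the composition $B_g^{\fw\geq3}(x,y)=T_g^{\fw\geq3}(x,D(x,y))$ of Lemma~\ref{lem:Bgtt} is critical and $B_g$ differs from $B_g^{\fw\geq3}$ only by series of strictly higher singular order. One first notes that it is immaterial whether one counts 2-connected graphs of genus exactly $g$ or 2-connected graphs embeddable in $\mathbb{S}_g$: the exponential generating function of the latter is $\sum_{h=0}^{g}B_h(x,y)$, and since for $h<g$ the order $\tfrac52(1-h)$ exceeds $\alpha$, only the term $h=g$ contributes to the leading asymptotics by Corollary~\ref{coro:transfer}.

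For the univariate estimate~\eqref{eq:asympt-2conn} one specialises to $y=1$. By~\cite{gimeneznoy}, $(x_0,1)$ with $x_0=1/\gamma$ is a singular point of $D(x,y)$, where $\gamma$ is the exponential growth constant of 2-connected planar graphs, so $\rho(1)=1/\gamma$; the first part of Corollary~\ref{coro:transfer} then gives $[x^n]B_g(x,1)\sim c\,n^{-\alpha-1}\gamma^n$ with $c>0$, and since $b_n^{(g)}=n!\,[x^n]B_g(x,1)$ and $-\alpha-1=5(g-1)/2-1$, this is exactly~\eqref{eq:asympt-2conn} with $d^{(g)}=c$. For the bivariate estimate one applies the second part of Corollary~\ref{coro:transfer}: since $\rho$ agrees with the singularity function of $D(x,y)$, the function $\mu(u)=-u\rho'(u)/\rho(u)$ is the edge-density parametrization analysed in~\cite{gimeneznoy}, which is strictly increasing with $a=\mu(0^+)$, $b=\lim_{u\to\infty}\mu(u)$ and $(1,3)\subseteq(a,b)$; for $\mu\in(1,3)$, taking $u_0$ with $\mu(u_0)=\mu$ and $\gamma_\mu=1/\rho(u_0)$ (the growth constant of planar graphs of edge density tending to $\mu$), the corollary gives $[x^ny^m]B_g(x,y)\sim c(\mu)\,n^{-\alpha-3/2}\gamma_\mu^n$ for $m=\lfloor\mu n\rfloor$ with $c(\mu)>0$, and multiplying by $n!$ with $-\alpha-3/2=5g/2-4$ gives the claim with $d_\mu^{(g)}=c(\mu)$.

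Positivity of $d^{(g)}$ and $d_\mu^{(g)}$ is already part of Corollary~\ref{coro:transfer}: $B_g$ has non-negative coefficients, hence so does its majorant in the approximate-singular sandwich, and the coefficient of the leading monomial of that majorant's expansion is non-zero by definition of the type, hence positive, and the same coefficient governs the lower bound. If an explicit check is desired, one can trace this dominant singular term back through Lemmas~\ref{lem:Bg}, \ref{lem:Bgt}, \ref{lem:Bgtt}, and~\ref{lem:comp} to that of $S_g$, whose order-$\alpha$ expansion has non-zero leading coefficient by Lemma~\ref{lem:Sgp} and the identity $\tPg(r_0,s_0)=P_g(p_0,q_0)\neq0$, which also exhibits $d^{(g)}$ as a positive multiple of a Bender--Canfield map constant~\cite{BeCa86}. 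I do not expect a genuine obstacle here: the substance lies entirely in Lemma~\ref{lem:Bg}, and the remaining work is the bookkeeping of exponents together with the appeals to~\cite{gimeneznoy} for the location of the dominant singularity of $D(x,y)$ and the monotonicity of $\mu(u)$.
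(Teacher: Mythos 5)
Your proof is correct and follows essentially the same route as the paper: the paper simply remarks that Theorem~\ref{theo:2conn} follows by ``applying the transfer theorems stated in Section~\ref{sec:sing} to the series counting 2-connected graphs of genus $g$,'' which is precisely the appeal to Corollary~\ref{coro:transfer} via Lemma~\ref{lem:Bg} that you carry out. You supply a few extra details the paper leaves implicit (the reduction from ``embeddable in $\mathbb{S}_g$'' to ``genus exactly $g$'' by summing over $h\le g$ and observing that only the top genus contributes at order $\alpha$, the exponent bookkeeping $-\alpha-1=5(g-1)/2-1$ and $-\alpha-3/2=5g/2-4$, and the positivity of the constants), but there is no difference in substance.
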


\subsubsection{Connected graphs of genus $g$}

\begin{lemma}[Connected graphs of genus $g$ and $\fw\geq 2$]\label{lem:Cgfwt}
For $g\geq 0$, the series $C_g^{\fw\geq 2}(x,y)$  is of strong 
approximate-singular order $\alpha$ around every singular  pair
$(x_0,y_0)$ of the series $G_0(x,y)$ counting planar
graphs.
\end{lemma}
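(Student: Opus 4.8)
The plan is to split off the planar case and, for $g>0$, to read $C_g^{\fw\geq 2}$ as a critical composition covered by Corollary~\ref{coro:comp}. For $g=0$ nothing needs to be done beyond citing Gim\'enez and Noy: the face-width is $+\infty$ in genus $0$, so $C_0^{\fw\geq 2}=C_0$ and $G_0=\exp C_0$, and it is proved in~\cite{gimeneznoy} that $C_0(x,y)$ (hence $G_0(x,y)$, which has the same singularity function in $x$) admits a log-free strong singular expansion of order $5/2=\alpha$ around every singular pair $(x_0,y_0)$; such a series is a fortiori of strong approximate-singular order $\alpha$. Denote by $\rho(y)$ the common singularity function in $x$ of $C_0$, $G_0$ and $F:=xC'$, the series of vertex-pointed connected planar graphs.

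For $g>0$ I would invoke Equation~\eqref{eq:conn2conn}, $C_g^{\fw\geq 2}(x,y)=B_g^{\fw\geq 2}(F(x,y),y)$, and apply Corollary~\ref{coro:comp} with the series $H$, $C$, $M$ of that corollary instantiated as $F(x,y)$, $B_g^{\fw\geq 2}(t,y)$, $C_g^{\fw\geq 2}(x,y)$ respectively. Its three hypotheses are checked as follows. (i) $F$ has a log-free strong singular expansion of order $3/2$ around $(x_0,y_0)$: this follows from the log-free strong expansion of $C_0$ of order $5/2$ proved in~\cite{gimeneznoy}, together with Lemma~\ref{lem:integrate} (derivation in $x$ decreases the order by one) and the fact that multiplying by the analytic factor $x$ does not change the type. (ii) $B_g^{\fw\geq 2}$ is of strong approximate-singular order $\alpha$ at $(t_0,y_0)$ with $t_0=F(x_0,y_0)$: this is Lemma~\ref{lem:Bgt}, once we know that $(t_0,y_0)$ is a singular pair of the network series $D(x,y)$. (iii) The composition is critical: the singularity function of $B_g^{\fw\geq 2}$ in its first variable --- which by Lemma~\ref{lem:Bgt} coincides with that of $D$, and hence with the singularity function $R(y)$ of the planar $2$-connected series $B_0$ --- must equal $F(\rho(y),y)$.

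Points (ii) and (iii) are really one statement, and it is the only piece with content: it is the criticality of the planar $2$-connected\,$\to$\,connected composition scheme, established in~\cite{gimeneznoy} and already used in this section (it is recorded in the proof of Lemma~\ref{lem:Bg} in the form $x_0=F(z_0,y_0)$). Concretely, at a singular pair $(x_0,y_0)$ of $G_0$ one has $x_0=\rho(y_0)$ and $F(\rho(y_0),y_0)=R(y_0)$, the radius of convergence of $x\mapsto B_0(x,y_0)$; since $D$ and $B_0$ share this singularity function, $(t_0,y_0)=(R(y_0),y_0)$ is a singular pair of $D$, so Lemma~\ref{lem:Bgt} applies there, and the identity $F(\rho(y),y)=R(y)$ is exactly criticality. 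As $g>0$ forces $\alpha=\tfrac52(1-g)\le 0$, Corollary~\ref{coro:comp} then gives that $C_g^{\fw\geq 2}(x,y)$ is of strong approximate-singular order $\alpha$ around $(x_0,y_0)$ with singularity function $\rho$, which is the claim. The main obstacle --- and it is a mild one --- is precisely this careful tracking of the two singularity functions, $\rho$ for $C_0,G_0,F$ and $R$ for $B_0,D,B_g^{\fw\geq 2}$, together with the verification that $F(\rho(y),y)=R(y)$ so that the composition feeding Corollary~\ref{coro:comp} is genuinely critical; everything else is a direct appeal to the machinery of Section~\ref{sec:sing}, to Lemma~\ref{lem:Bgt}, and to the planar results of~\cite{gimeneznoy}.
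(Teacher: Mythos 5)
Your proposal is correct and follows essentially the same route as the paper: write $C_g^{\fw\geq 2}=B_g^{\fw\geq 2}\circ F$ via Equation~\eqref{eq:conn2conn}, invoke Lemma~\ref{lem:Bgt} for the outer series, cite Gim\'enez--Noy for the log-free order-$3/2$ expansion of $F$ and for the criticality $F(\rho(y),y)=R(y)$, then conclude via Corollary~\ref{coro:comp}. You are slightly more careful than the paper in treating $g=0$ separately (since Equation~\eqref{eq:conn2conn} is only stated for $g>0$) and in explicitly noting that $g>0$ forces $\alpha\le 0$, as required by Lemma~\ref{lem:comp} underlying the corollary.
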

\begin{proof}
In this proof the first variable of the series of 2-connected graph families is denoted
by $z$ to avoid confusion. Recall Equation~(\ref{eq:conn2conn}): 
$C_g^{\fw\geq 2}(x,y)=B_g^{\fw\geq 2}(F(x,y),y)$. Gim\'{e}nez and
Noy~\cite{gimeneznoy} have shown that, if $(x_0,y_0)$ is a
singular point of $G_0(x,y)$, then $(z_0,y_0)$ ---where
$z_0=F(x_0,y_0)$--- is a singular point of $D(z,y)$. Hence,
according to Lemma~\ref{lem:Bgt}, $(z_0,y_0)$ is also a singular
point of $B_g^{\fw\geq 2}(z,y)$ (precisely, is a singular point of
the two series bounding $B_g^{\fw\geq 2}(z,y)$ from above and
below). Hence the composition scheme is critical. In addition, as
proved in~\cite{gimeneznoy}, $F(x,y)$ admits a log-free singular expansion of
order $3/2$ around $(x_0,y_0)$. Hence by
Corollary~\ref{coro:comp}, $C_g^{\fw\geq 2}(x,y)$ is of strong
approximate-singular order $\alpha$ around every singular  point
$(x_0,y_0)$ of $F(x,y)$.
\end{proof}

\begin{lemma}[Connected graphs of genus $g$, and with fixed face-width]
	\label{lem:Cg}
For $g\geq 0$ and $k\geq 0$, the series $C_g^{\fw=k}(x,y)$  is bounded
coefficientwise by a series that is of strong 
approximate-singular order $\alpha+1/2$ around every singular
point $(x_0,y_0)$ of the series $G_0(x,y)$ counting planar
graphs. 

And the series $C_g(x,y)$  is of strong
approximate-singular order $\alpha$ around 
$(x_0,y_0)$.
\end{lemma}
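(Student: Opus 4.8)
The plan is to induct on $g$. For $g=0$ the face-width is infinite by convention, so $\cC_0^{\fw=k}$ is empty for every finite $k$ and the first assertion is vacuous, while the second --- that $C_0(x,y)$ is of strong approximate-singular order $5/2$ at its singular points, which are exactly those of $G_0(x,y)$ --- is the theorem of Gim\'enez and Noy~\cite{gimeneznoy}. So fix $g>0$ and assume the lemma, hence all of Section~\ref{sec:graphs}, for genera smaller than $g$. Every embedding of a graph of genus $g>0$ in $\mathbb{S}_g$ has face-width at least $1$, so $\cC_g^{\fw=0}$ is empty and $C_g(x,y)=C_g^{\fw\geq 2}(x,y)+C_g^{\fw=1}(x,y)$. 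By Lemma~\ref{lem:Cgfwt}, $C_g^{\fw\geq 2}$ is of strong approximate-singular order $\alpha$ around each singular point of $G_0(x,y)$, and adding to it a series bounded coefficientwise by one of strong approximate-singular order $\alpha+\tfrac12>\alpha$ does not change the approximate-singular order; so the second assertion follows from the first, and it remains to bound each $C_g^{\fw=k}$.

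For $k\geq 2$, the first part of Theorem~\ref{theo:Rob} shows that blowing up the vertices of a $2$-connected graph of genus $g$ and face-width $k$ by pointed connected planar graphs preserves the face-width, so, exactly as in Equation~\eqref{eq:conn2conn}, $C_g^{\fw=k}(x,y)=B_g^{\fw=k}(F(x,y),y)$. For $k=2$, Lemma~\ref{lem:Bgt} bounds $B_g^{\fw=2}$ coefficientwise by a series of strong singular order $\alpha+\tfrac12$ at each singular point of $D(x,y)$. For $k\geq 3$, the second part of Theorem~\ref{theo:Rob} gives $B_g^{\fw=k}(x,y)=T_g^{\fw=k}(x,D(x,y))$, and, by~\cite{RoVi90}, a $3$-connected map of face-width $k\geq 3$ corresponds to an irreducible quadrangulation of edge-width $2k$, which is near-irreducible because $2k>4$; hence $T_g^{\fw=k}\preceq E_g^{\fw=k}=S_g^{\ew=2k}\preceq S_g^{C=2k}$, and Lemma~\ref{lem:Sgci} bounds $S_g^{C=2k}$ coefficientwise by a series of strong singular order $\alpha+\tfrac12$ at each singular point of~\eqref{eq:systrs}. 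In both cases $B_g^{\fw=k}$ is bounded coefficientwise by a series of strong singular order $\alpha+\tfrac12$; since $D(x,y)$ and $F(x,y)$ have log-free strong singular expansions of order $3/2$ and the compositions $T_g^{\fw=k}(x,D(x,y))$ and $B_g^{\fw=k}(F(x,y),y)$ are critical (by~\cite{BeGa} and~\cite{gimeneznoy}, respectively), Lemma~\ref{lem:comp} --- whose proof applies verbatim to any half-integer order, in particular the value $\alpha+\tfrac12$, which is positive when $g=1$ --- transports this bound first to $B_g^{\fw=k}$ and then to $C_g^{\fw=k}$, around every singular point $(x_0,y_0)$ of $G_0(x,y)$.

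For $k=1$ I would adapt the cutting argument of Lemma~\ref{lem:Bg}. Given $G\in\cC_g^{\fw=1}$, pick an embedding $M$ of $G$ in $\mathbb{S}_g$ realizing face-width $1$ and a non-contractible curve $C$ meeting $M$ at a single vertex $v$ (such a curve exists and may be taken to meet $M$ only at a vertex). Cutting $\mathbb{S}_g$ along $C$ splits $v$ into two marked vertices $v_1,v_2$ and produces an embedded graph $G'$ which is either connected on $\mathbb{S}_{g-1}$ (if $C$ is non-separating) or a disjoint union of two graphs embedded on $\mathbb{S}_{g_1}$ and $\mathbb{S}_{g_2}$ with $g_1+g_2=g$ and $g_1,g_2\geq 1$ (if $C$ is separating, which forces $v$ to be a cut vertex of $G$ and uses that a non-contractible separating curve bounds a surface of positive genus on each side). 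In every case each connected component of $G'$ has genus at most $g-1$, $G'$ carries exactly two marked vertices, and $G$ is recovered by reidentifying $v_1$ with $v_2$; so, up to the well-labelling artifact of Lemma~\ref{lem:Bgt} (now a reserve of a single labelled vertex), $C_g^{\fw=1}(x,y)$ is bounded coefficientwise by a finite nonnegative combination of products of derivatives of the series $C_h(x,y)$ with $h<g$, in which the genera sum to at most $g$ and the order of derivation to exactly $2$ --- an extremal term being $C_{g-1}''$ in the non-separating case and $C_{g_1}'C_{g_2}'$ with $g_1+g_2=g$ in the separating case. By the induction hypothesis each $C_h$ is of strong approximate-singular order $\tfrac52(1-h)$ around each singular point of $G_0(x,y)$, so, using Lemma~\ref{lem:integrate} (derivation), Lemma~\ref{lem:sing_prod} (product, extended in the obvious way to positive orders for the planar factors that occur when $g=1$) and closure under finite sums, every such term has strong approximate-singular order at least $3-\tfrac52g=\alpha+\tfrac12$. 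Hence $C_g^{\fw=1}$ is bounded coefficientwise by a series of strong approximate-singular order $\alpha+\tfrac12$ around each singular point of $G_0(x,y)$, which completes the first assertion and with it the lemma.

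The step I expect to be the main obstacle is the case $k=1$: one must check that a curve realizing face-width $1$ can be chosen through a single vertex (standard), and, crucially for the induction, that cutting along it always yields pieces of strictly smaller genus. The latter is clear in the non-separating case and, in the separating case, rests on the topological fact that a non-contractible \emph{separating} curve bounds positive genus on each side, so $g_1,g_2\in[1,g-1]$. Everything else --- the labelling artifact, the (finitely many) orders of the product terms, and the easy extension of Lemmas~\ref{lem:sing_prod} and~\ref{lem:comp} to positive half-integer orders, needed only when $g=1$ --- is routine.
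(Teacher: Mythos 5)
Your proof is correct, but it follows a genuinely different route from the paper's for most values of $k$. The paper proves the first assertion for \emph{every} $k\geq 1$ by a single uniform cutting argument applied directly to connected graphs: choose an embedding and a non-contractible curve meeting it in $k$ vertices, cut along it, add a special vertex in each cylindrical end joined to the $k$ contour vertices, record how the two sides were matched by a pair of permutations in $\mathfrak{S}_k$, and obtain an injection of $x^k C_g^{\fw=k}$ into $(\mathcal{F}_g+\mathcal{H}_{g-1})\cdot k!^2$, where $\mathcal{F}_g$ and $\mathcal{H}_{g-1}$ are built from derived series $C_h'$, $C_h''$ with $h<g$. You only perform this surgery for $k=1$ (where the permutations are trivial), and for $k\geq 2$ you instead invoke the structure theorems of Robertson--Vitray (the two parts of Theorem~\ref{theo:Rob}) to write $C_g^{\fw=k}=B_g^{\fw=k}(F,y)$, reduce $B_g^{\fw=k}$ for $k\geq 3$ through $T_g^{\fw=k}\preceq S_g^{\ew=2k}\preceq S_g^{C=2k}$ and for $k=2$ through Lemma~\ref{lem:Bgt}, and then transport the bound through the two critical compositions with $D$ and $F$. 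That reuses the cutting arguments already carried out at the quadrangulation and $2$-connected levels instead of redoing one for connected graphs, which is an attractive shortcut; the cost is that you need Lemma~\ref{lem:comp} at the positive half-integer order $\alpha+\tfrac12$, which occurs when $g=1$. Your parenthetical claim that the proof of Lemma~\ref{lem:comp} applies ``verbatim to any half-integer order'' is slightly too strong: if the outer order exceeded the inner order $3/2$, the analytic part of the outer series composed with the inner one would already contribute a dominant $X^3$ term, so the conclusion would fail. Here one needs, and has, $\alpha+\tfrac12\leq\tfrac12<\tfrac32$ for all $g\geq 1$, so the extension you use is in fact safe --- but the hypothesis ``order $<3/2$'' should be stated rather than ``any order''. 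Apart from this, your $k=1$ case coincides with the paper's argument specialised to $k=1$ (including the same unexamined assertion, shared with the paper, that the graph obtained after a non-separating cut is connected), and the bookkeeping of the resulting product orders via Lemmas~\ref{lem:integrate} and~\ref{lem:sing_prod} matches the paper's.
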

\begin{proof}
As in Lemma~\ref{lem:Bgt} (and using similar notations), 
the proof works by induction on $g$. The stated properties are true
when $g=0$ since by convention the face-width of any planar graph is infinite. 
Assume the property is true up to $g-1$, for $g>0$. 
Let $\cE_g^{(k)}$ 
be the family of graphs (as usual well-labelled, the vertices
have distinct labels in $[1..n]$) made of two connected components,
the first one in $\cC_g^{\fw=k}$, the second
one an oriented path of $k$ vertices (again the second component is just
an artifact to have a reserve of $k$ labelled vertices). 
Note that the series of $\cE_g^{(k)}$ is $x^kC_g^{\fw=k}(x,y)$. 
Let $\cF_g$ be the family of graphs with two connected components
of positive genera $(g_1,g_2)$ adding up to $g$,
each connected component having a marked vertex that is unlabelled;
so the series of $\cF_g$ satisfies
$$
F_g(x,y)=\sum_{\substack{g_1+g_2=g\\ g_1>0,g_2>0}}C_{g_1}\ \!\!\!'(x,y)C_{g_2}\ \!\!\!'(x,y).
$$
And let $\cH_g$ be the doubly derived family of connected graphs of genus at most $g$
 (i.e., with two marked unlabelled vertices): $\cH_g=\cup_{h\leq g}\cC_h\ \!\!\!''$,
 so the series of $\cH_g$ satisfies
 $$
 H_g(x,y)=\sum_{h\leq g}C_h\ \!\!\!''(x,y).
 $$ 
Denote by $\frak{S}_k$ the group of permutations of $k$ elements. 
We are going to define an injective mapping from $\cE_g^{(k)}$
to $(\cF_g+\cH_{g-1})\times\frak{S}_k\ \!\!\!^2$. (Note that $\cF_g$ and $\cH_{g-1}$
only involve families of connected graphs of genus smaller than $g$, which will allow us 
to use induction to conclude the proof).

Let $(G,p)\in\cE_g^k$. 
Associate with  $G$  an embedding
$M$ (map of genus $g$) of face-width $k$.
 Let $C$ be an oriented non-contractible curve intersecting $M$ at $k$ vertices,
 one of which is marked as the ``starting vertex'' of $C$.
Cut $M$ along $C$, thus creating 
two cylindric ends, as in Figure~\ref{fig:cuts}. When doing this, each vertex along $C$ is
split into two vertices, one in each cylindric end; to make the resulting graph
well-labelled, the vertices $(v_1,\ldots,v_k)$
of the contour of $C$ (with $v_1$ the starting vertex) on the right tip are labelled
according to $p$, i.e., $v_i$ receives the label of the $i$th vertex on the path $p$. 

This process either yields
two maps whose genera add up to $g$ (case where $C$ is surface-separating)
or cuts a handle and yields a map of genus $g-1$.
Add a special vertex $v_{\mathrm{left}}$ ($v_{\mathrm{right}}$, resp.) in the 
 tip-area
of  the left (right, resp.) cylindric end, and connect the special vertex
to the $k$ vertices on the contour of the tip-area (in Figure~\ref{fig:cuts}
a similar operation occurs, but the special vertex is only connected
to the white vertices of the contour). 
Let $G'$ be the graph (with two components if $C$ is surface-separating,
with one component otherwise) obtained after this process and forgetting
the embedding. 
Note that the $k$ neighbours of $v_{\mathrm{left}}$
and the $k$ neighbours of $v_{\mathrm{right}}$ were matched in $G$ along $C$; 
record this matching by two permutations $(\sigma,\sigma')\in\frak{S}_k$ defined
as follows:  if $(v_1,\ldots,v_k)$ is the occurrence of $C$ on the left (right, resp.) tip, then
$\sigma(i)=j$  ($\sigma'(i)=j$, resp.) 
means that $v_i$ has the $j$th smallest label among $v_1,\ldots,v_k$.

We claim that the mapping from $(G,p)$ to $(G',\sigma,\sigma')$ is injective.
Indeed $(G,p)$ can be recovered from $(G',\sigma,\sigma')$ as follows: 
\begin{itemize}
\item
For $i\in[1..k]$, let $u_i$ be the neighbour 
of $v_{\mathrm{left}}$ in $G'$ with $\sigma(i)$th smallest label,
and let $u_i'$ be the neighbour of $v_{\mathrm{right}}$ in $G'$ 
with $\sigma'(i)$th smallest label.
Then $G$ is recovered from $G'$ by merging $u_i$ with $u_i'$ for $i\in[1..k]$
(the merged vertex keeps the label of $u_i$) and then by  
removing the special vertices $v_{\mathrm{left}}$ and $v_{\mathrm{right}}$ and
their incident edges.
\item
And $p$ is recovered as the oriented path of $k$ vertices, where for $i\in[1..k]$ 
the $i$th vertex of $p$ 
receives the label of  $u_i'$.
\end{itemize}

 Since the construction is injective, we have
 $$
 x^kC_g^{\fw=k}(x,y)=E_g^{(k)}(x,y)\preceq (F_g(x,y)+H_{g-1}(x,y))\cdot k!^2.
 $$
 By induction, for $h<g$, 
 $C_h(x,y)$ is of strong approximate-singular order $5/2(1-h)$ at any 
singular point $(x_0,y_0)$ of $G_0(x,y)$ (series counting connected planar graphs), 
hence $C_h\ \!\!\!'(x,y)$ ($C_h\ \!\!\!''(x,y)$, resp.) 
 is of strong approximate-singular order $3/2-5/2h$ ($1/2-5/2h$, resp.). 
 Hence $F_g(x,y)$ (by Lemma~\ref{lem:sing_prod}) and $H_g(x,y)$ are both 
 of strong approximate-singular order
 $3-5/2g=\alpha+1/2$ at $(x_0,y_0)$. This proves the first assertion.
 
The second assertion follows easily. 
Indeed $C_g=C_g^{\fw\geq 2}+C_g^{\fw=1}$. 
By Lemma~\ref{lem:Cgfwt}, 
the first term is of strong approximate-singular order $\alpha$. 
 And the second term is negligible ---bounded by a series of strong 
 approximate-singular order 
 $\alpha+1/2$ at $(x_0,y_0)$--- according to the first assertion
applied to $k=1$.  
\end{proof}

\subsubsection{Graphs of genus $g$}

\begin{lemma}[Graphs of genus $g$]\label{lem:Gg}
For $g\geq 0$, the series $G_g(x,y)$ counting graphs of genus $g$
is of strong approximate-singular order $\alpha$ around every
singular  point $(x_0,y_0)$ of the series $G_0(x,y)$ counting
 planar graphs.
\end{lemma}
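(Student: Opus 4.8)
The plan is to derive $G_g(x,y)$ from $C_g(x,y)$ via the standard set-construction, exactly as in the planar case, and then invoke the critical composition machinery of Section~\ref{sec:sing}. Since a graph is a (labelled) set of connected graphs, and the genus of a graph is the sum of the genera of its connected components, I would first write
$$
G_g(x,y) = \sum_{\substack{g_1+\cdots+g_k=g}} \frac{1}{k!}\,\prod_{i=1}^k C_{g_i}(x,y)\cdot\exp\bigl(C_0(x,y)\bigr)\Big/\exp(C_0(x,y))\cdots
$$
more precisely, isolating the planar components into the factor $\exp(C_0(x,y)) = G_0(x,y)$ and the (finitely many) non-planar components into a polynomial expression. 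Concretely, $G_g = G_0 \cdot \Phi_g(C_1,\dots,C_g)$, where $\Phi_g$ is a universal polynomial with nonnegative coefficients: the dominant term is $G_0\cdot C_g$ (a single genus-$g$ component together with an arbitrary planar part), and all the other terms are products $G_0\cdot C_{g_1}\cdots C_{g_j}$ with $j\geq 2$ and $g_1+\cdots+g_j=g$, $g_i>0$.

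Next I would analyze the singular behaviour of each piece. By Lemma~\ref{lem:Cg}, each $C_h(x,y)$ for $h\le g$ is of strong approximate-singular order $\tfrac52(1-h)$ around every singular point $(x_0,y_0)$ of $G_0(x,y)$. The planar factor $G_0(x,y)$ itself has a (strong, log-free) singular expansion of order $5/2$ at $(x_0,y_0)$, as established by Gim\'enez and Noy~\cite{gimeneznoy}. For the dominant term $G_0\cdot C_g$: since $C_g$ has approximate-singular order $\alpha=\tfrac52(1-g)<0$ (for $g>0$) and $G_0$ has positive order $5/2$, Lemma~\ref{lem:sing_prod} (in its approximate-singular version, which follows by combining Lemma~\ref{lem:sing_prod} with the definition of approximate-singular order) gives that $G_0\cdot C_g$ is of strong approximate-singular order $\alpha$. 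For each lower-order term $G_0\cdot C_{g_1}\cdots C_{g_j}$ with $j\ge2$: the product $C_{g_1}\cdots C_{g_j}$ is of approximate-singular order $\sum_i \tfrac52(1-g_i) = \tfrac52 j - \tfrac52 g$, and since $j\ge2$ this is $\ge \tfrac52 g^{-1}\cdot 2 - \tfrac52 g = 5 - \tfrac52 g = \alpha + 1/2 + \cdots$; in any case it exceeds $\alpha$ by at least $1/2$, so multiplying by $G_0$ (positive order) keeps it of approximate-singular order strictly larger than $\alpha$, hence negligible. Summing finitely many such terms, $G_g(x,y)$ is of strong approximate-singular order $\alpha$ at $(x_0,y_0)$.

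For the case $g=0$ there is nothing to prove: $G_0(x,y)$ has a strong singular expansion of order $5/2 = \alpha$ by~\cite{gimeneznoy}, which is in particular an approximate-singular expansion of that order. I should also check that the composition is \emph{critical} and that the relevant singular points coincide: the singular point of $G_0(x,y)$ in the variable $x$ is the same as that of the exponential generating function of connected planar graphs, and by~\cite{gimeneznoy} the singularity of $G_0$ in $x$ is inherited from that of $C_0$, which in turn matches the singular point $(x_0,y_0)$ appearing in Lemma~\ref{lem:Cg}. The main obstacle I anticipate is purely bookkeeping: making the passage from ``sum of connected components'' to the clean factorization $G_g = G_0\cdot\Phi_g$ fully rigorous, and verifying that the approximate-singular-order arithmetic (the analogue of Lemma~\ref{lem:sing_prod} for approximate orders, together with additivity of orders of products and dominance of the largest term in a sum) holds with the strong qualifier intact; all of this parallels the reasoning already carried out in Lemmas~\ref{lem:Bg} and~\ref{lem:Cg}, so no genuinely new idea is needed.
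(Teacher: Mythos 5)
Your proof is essentially correct and follows the same overall strategy as the paper: decompose by additivity of genus over connected components, isolate the dominant term $C_g(x,y)\cdot G_0(x,y)$, and show that the contribution of graphs with two or more non-planar components has strictly higher singular order. The only difference is in how the remainder is handled: the paper writes $G_g = C_g G_0 + K_g$ and bounds $K_g \preceq \sum_{g_1+g_2=g,\ g_i>0} G_{g_1}G_{g_2}$, invoking the lemma's own inductive hypothesis on $G_h$ for $h<g$; you instead use the exact convolution identity $G_g = G_0\cdot\Phi_g(C_1,\dots,C_g)$ (which follows from $G_g=[t^g]\exp\bigl(\sum_{h\geq0}t^hC_h\bigr)$) and appeal only to Lemma~\ref{lem:Cg}, avoiding the induction. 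Both work; yours is arguably tidier since it replaces a bound-plus-induction by an exact identity. Two small slips that do not affect the conclusion: a term with $j\geq2$ non-planar components has order $\tfrac52(j-g)\geq 5-\tfrac52 g=\alpha+\tfrac52$, not ``$\alpha+1/2$'' (and the stray ``$\tfrac52 g^{-1}$'' is a typo); and your opening reference to ``critical composition machinery'' is misleading --- the decomposition here is a product, not a substitution, and your actual argument correctly uses only Lemma~\ref{lem:sing_prod} and not Lemma~\ref{lem:comp}.
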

\begin{proof}
Again the proof works by induction on $g$.
The result has been proved by Gim\'enez and Noy~\cite{gimeneznoy}
for $g=0$.
Let $g>0$, and assume the stated property is true up to genus $g-1$. 
The genera of the connected components of a genus $g$ graph $G$
add up to $g$. If two connected components are non-planar then
there is a partition of $G$ into two non-planar graphs $G_1, G_2$
whose genera add up to $g$
\begin{equation}
G_g(x,y)=C_g(x,y)G_0(x,y)+K_g(x,y),
\end{equation}
where
$$
K_g(x,y)\preceq \sum_{\substack{g_1+g_2=g\\
g_1>0,g_2>0}}G_{g_1}(x,y)G_{g_2}(x,y).
$$
By induction, for $h<g$, $G_{h}(x,y)$ is of strong
approximate-singular order $5(1-h)/2$ around $(x_0,y_0)$. Hence,
for $g_1>0$, $g_2>0$ such that $g_1+g_2=g$, Lemma~\ref{lem:sing_prod} ensures that 
the series
$G_{g_1}(x,y)G_{g_2}(x,y)$ is of strong approximate-singular
order $5/2(1-g_1)+5/2(1-g_2)=\alpha+5/2$ around $(x_0,y_0)$. 
Thus the dominating part of
$G_g(x,y)$ is $C_g(x,y)G_0(x,y)$, which by Lemma~\ref{lem:sing_prod} 
is of strong approximate-singular
order $\alpha$ around $(x_0,y_0)$ (indeed $C_g(x,y)$ is of
approximate-singular order $\alpha$ by Lemma~\ref{lem:Cg} and
$G_0(x,y)$ is of approximate-singular order $5/2$, as proved
in~\cite{gimeneznoy}).
\end{proof}

\begin{lemma}[Graphs embeddable on $\mathbb{S}_g$]\label{lem:Fg}
For $g\geq 0$, the series $F_g$ counting graphs embeddable on the
genus $g$ surface is of strong approximate-singular order
$\alpha$ around every singular  point $(x_0,y_0)$ of the series
$C(x,y)$ counting connected planar graphs.
\end{lemma}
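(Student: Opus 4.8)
The plan is to obtain this lemma as a direct corollary of Lemma~\ref{lem:Gg}. A graph embeds in $\mathbb{S}_g$ precisely when its orientable genus is at most $g$, so, writing $G_h(x,y)$ for the series of graphs of genus exactly $h$, we have the finite decomposition
\begin{equation*}
F_g(x,y)=\sum_{h=0}^{g}G_h(x,y).
\end{equation*}
I would therefore apply Lemma~\ref{lem:Gg} to each summand: for $0\le h\le g$, the series $G_h(x,y)$ is of strong approximate-singular order $\tfrac{5}{2}(1-h)$ around every singular point of $G_0(x,y)$. Before invoking this I would check that the singular points of $G_0(x,y)$ coincide with the singular points of $C(x,y)$ appearing in the statement: a planar graph is a set of connected planar graphs, so $G_0(x,y)=\exp(C(x,y))$, whence $G_0$ and $C$ share the same singularity function $\rho(u)$ and hence the same set of singular points (a standard fact, also used in~\cite{gimeneznoy}).

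Next, let $D_h,E_h$ be the series witnessing the strong approximate-singular order of $G_h$, so that $D_h-E_h\preceq G_h\preceq D_h$, with $D_h$ of strong singular order $\tfrac{5}{2}(1-h)$, $E_h$ of strictly larger order, and $D_h,E_h$ sharing the singularity function $\rho(u)$. I would set
\begin{equation*}
D(x,y):=\sum_{h=0}^{g}D_h(x,y),\qquad E(x,y):=E_g(x,y)+\sum_{h=0}^{g-1}D_h(x,y).
\end{equation*}
The bound $F_g\preceq D$ is immediate from $G_h\preceq D_h$, and the bound $D-E\preceq F_g$ follows from $D-F_g=\sum_h(D_h-G_h)\preceq E_g+\sum_{h<g}D_h=E$, using $D_g-G_g\preceq E_g$ and $D_h-G_h\preceq D_h$.

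It then remains to read off the singular orders. In $D$, the term $D_g$ has order $\alpha=\tfrac{5}{2}(1-g)$ while each $D_h$ with $h<g$ has order $\tfrac{5}{2}(1-h)\ge\alpha+\tfrac{5}{2}>\alpha$; since strong singular expansions are closed under finite sums (Section~\ref{sec:sing}), $D$ has a strong singular expansion of order $\alpha$ with singularity function $\rho(u)$. Similarly $E$ is a finite sum of series whose strong singular orders are all strictly larger than $\alpha$ (the order of $E_g$, which exceeds $\alpha$, together with orders $\ge\alpha+\tfrac{5}{2}$), so $E$ has a strong singular expansion of some order $>\alpha$, again with singularity function $\rho(u)$. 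By the definition in Section~\ref{sec:sing}, this exhibits $F_g(x,y)$ as being of strong approximate-singular order $\alpha$ around every singular point $(x_0,y_0)$ of $C(x,y)$. I do not expect a genuine obstacle: the step most worth double-checking is the identification of the singular points of $C(x,y)$ with those of $G_0(x,y)$, and everything else is the routine bookkeeping of Section~\ref{sec:sing}, in which the smallest singular order present in a finite sum dominates.
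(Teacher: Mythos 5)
Your proposal is correct and takes essentially the same approach as the paper: both use the decomposition $F_g=\sum_{h=0}^g G_h$ and invoke Lemma~\ref{lem:Gg}, concluding that $G_g$ dominates. Your explicit construction of the bounding series $D,E$ and the identification of the singular points of $C(x,y)$ with those of $G_0(x,y)=\exp(C(x,y))$ merely spell out bookkeeping that the paper's terse proof leaves implicit.
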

\begin{proof}
Note that a graph embeddable on $\mathbb{S}_g$ has genus at most
$g$, hence
$$
F_g(x,y)=\sum_{i=0}^gG_i(x,y).
$$
By Lemma~\ref{lem:Gg}, each series $G_i(x,y)$ in the sum is
 of strong approximate-singular order $5(1-i)/2$, hence the
dominating series in the sum is $G_g(x,y)$, which is of strong 
approximate-singular order $5(1-g)/2=\alpha$.
\end{proof}

To conclude, applying the transfer theorems
(Corollary~\ref{coro:transfer}) to the singular expansions of the
series counting graphs embeddable on $\mathbb{S}_g$
(Lemma~\ref{lem:Fg}), we obtain
 the asymptotic enumeration results stated in Theorem~\ref{theo:main}.

 \subsection{The non-orientable case}
The proof of Theorem~\ref{theo:main2} follows  the same
lines as the proof of Theorem~\ref{theo:main}.
Here, we explain the mild differences and how to adapt the proof
at those points. 
Let $h>0$ and let $\mathbb{N}_h$ be the non-orientable surface with $h$ crosscaps.
In this section, we use the same notations as before for the generating series of maps and graphs, but with a different meaning: all series will refer to non-orientable surfaces, and the subscript in the notation will refer to the non-orientable genus $h$ (whereas in previous sections it referred to the oriented genus). For example, the generating series of near-irreducible quadrangulations on the surface $\mathbb{N}_h$ will be denoted by $S_h(x,w)$.

Before giving the outline of the proof of Theorem~\ref{theo:main2}, let us give some definitions. The \emph{non-orientable genus} of a graph $G$ is the smallest $h$ such that $G$ can be embedded in the non-orientable surface $\mathbb{N}_h$, and the \emph{non-orientable face-width} of $G$ is the largest face-width among all the embeddings of~$G$ on $\mathbb{N}_h$. Since we will have to consider at the same time orientable and non-orientable surfaces, it makes sense to introduce the \emph{Euler genus} $\kappa$ of a surface $S$, which is defined as $\kappa= 2-\chi(S)$, where $\chi(S)$ is the Euler characteristic of $S$. Therefore $\kappa = 2g$ for the orientable surface $\mathbb{S}_g$, and $\kappa=h$ for the non-orientable surface $\mathbb{N}_h$. The \emph{Euler genus} of a graph is the minimal Euler genus of a surface (orientable or non-orientable) in which $G$ can be embedded.

We now define $\alpha=\tfrac{5}{2}(1-h/2)$ (as we will see, the singular exponent for a given surface depends only on its Euler genus, and $h$ plays the same role here as $2g$ in the orientable case). 
The bijection between maps and bipartite quadrangulations
also works on the non-orientable surface $\mathbb{N}_h$, and the extraction
of a near-irreducible core works in the same way, so the equation
relating rooted maps and near-simple quadrangulations on $\mathbb{N}_h$
is the same as for $\mathbb{S}_g$
(Equation~\eqref{eq:MrS}). The only difference is in the expression of 
the series $\overrightarrow{M}_h(x,u)$ of rooted maps on $\mathbb{N}_h$;
Arqu\`es and Giorgetti have shown in~\cite{ArGi} that there exists a trivariate polynomial
$P_h(X,Y,Z)$ such that
$$
\overrightarrow{M}_h(x,u)=\frac{P_h(p,q,\sqrt{\Delta})}{\Delta^{5h/2-3}},
$$
i.e., the numerator-polynomial has  $\sqrt{\Delta}$ as additional parameter, where $\Delta$ is as before the Jacobian of~(\ref{eq:systpq}).
Recall that around a singular point of~\eqref{eq:systpq}, $\Delta(p(x,u),q(x,u))$
is of the order of $\sqrt{1-x/\rho(u)}$, so $\sqrt{\Delta}$ is of the order
of $(1-x/\rho(u))^{1/4}$. Hence, in the non-orientable case 
singular expansions are of the same form as in the orientable case, but
in terms of $\widetilde{X}=(1-x/\rho(u))^{1/4}$ instead of $X=(1-x/\rho(u))^{1/2}$,
which is a mild extension of the definition given in Section~\ref{sec:sing}.
The definition of the order of a singular expansion is also 
readily extended.

Similarly as in the orientable case, the sketch of proof in~\cite{BeCaRi93} ensures
that $P_h$ is non-zero at a singular point $(x_0,u_0)$ for $(p,q)$, so 
$\overrightarrow{M}_h(x,u)$ admits a singular expansion of order $\alpha$
at $(x_0,u_0)$. 
Adapting Equation~(\ref{eq:expSgp}), 
the expression of the series $S_h\ \!\!\!'$  of rooted 
near-simple quadrangulations on $\mathbb{N}_h$ is of the form
$$
S_h\ \!\!\!'(x,w)=\frac{1}{\tA(r,s)}\frac{\tPg\Bigl(r,s,\sqrt{\tDelta(r,s)}\,\Bigr)}{(\tDelta(r,s))^{5g-3}}.
$$
All the arguments of Lemma~\ref{lem:Sgp} still apply, even with 
the extra parameter $\sqrt{\tDelta}$ in the numerator-polynomial,
so $S_h$ has a singular expansion of order $\alpha$ at every singular point
of~\eqref{eq:systrs}. 
In the statement of Lemma~\ref{lem:Sgci}, 
$\alpha+1/2$ has to be changed to $\alpha+1/4$
(this is no problem, the contribution of near-irreducible 
quadrangulations of a fixed edge-width is still
negligible since the singular order is larger than the one of near-irreducible quadrangulations).
This is due to the fact that there is one more case to consider in the proof:
that the marked cycle $C$ of length $2i$ crosses a crosscap. In that case, cutting along
$C$ removes the crosscap, and yields a surface of Euler genus $h-1$. So we obtain either a quadrangulation on $\mathbb{N}_{h-1}$ or a quadrangulation
on the orientable surface $\mathbb{S}_g$, where $g=\frac{h-1}{2}$ (the latter case may only appear if $h$ is odd).
In both cases, $C$ yields a boundary of length $4i$ delimiting a ``hole'' in the surface.
Filling this hole with a disk and placing a marked black vertex connected
to all white vertices on the contour, we obtain a quadrangulation on $\mathbb{N}_{h-1}$ or $\mathbb{S}_g$
with a marked black vertex. This quadrangulation may not be near-irreducible.
However, any contractible 2-cycle or 4-cycle 
bounds a disk that contains the marked black vertex. These cycles are therefore nested around
the marked vertex. Hence by similar 
arguments as in Lemma~\ref{lem:Sgci},
the contribution of ``$C$ crossing a crosscap" is bounded by a series of the same
singular order as the series counting near-irreducible quadrangulations on $\mathbb{N}_{h-1}$ with a marked vertex, plus those on $\mathbb{S}_g$ (when $h$ is odd).
 By induction, the latter series with no marked
vertex is of singular order $\alpha+5/4$
(since $\alpha+5/4=\frac{5}{2}(1-(h-1)/2)=\frac{5}{2}(1-g)$). The effect of marking a vertex is to decrease the order
by $1$, so the bounding series is actually of order $\alpha+1/4$.
The other two cases ($C$ being surface-separating and $C$ cutting a handle)
still yield contributions bounded by a series of singular order $\alpha+1/2$.
Let us remark that in these cases we may also end up with an orientable surface $\mathbb{S}_{g'}$ but since for each $g'$ the singular orders for $\mathbb{S}_{g'}$ and $\mathbb{N}_{2g'}$ are the same, the orientability of the surface obtained after cutting along $C$ has no effect on the singular order of the corresponding bounding series.

The next step is to go to 3-connected graphs of non-orientable genus $h$ via
3-connected maps on $\mathbb{N}_h$. 
By Proposition 5.5.12 in~\cite{Mohar}, a map on $\mathbb{N}_h$ is 3-connected
if the associated quadrangulation is irreducible (no 2-cycles and no non-facial 4-cycles at all). Conversely, 
the quadrangulation associated with a 3-connected map of face-width $\geq 2$ is near-irreducible (no 2-cycles and no contractible non-facial 4-cycles).
So 3-connected maps on $\mathbb{N}_h$ have the same
approximate-singular expansion as near-irreducible quadrangulations. 
Since a 3-connected graph of non-orientable genus $h$ has a
unique embedding if the face width is at least $2h+3$
\cite{RoVi90}, one shows that the corresponding series is of strong
approximate-singular order $\alpha$ similarly as in Lemma~\ref{lem:Tg}.
Finally, one proceeds with 2-connected graphs of non-orientable genus $h$
and then with connected graphs and arbitrary graphs
similarly as in the orientable case, since the statements in Theorem~\ref{theo:Rob}
also hold with the non-orientable genus. Again in Lemmas~\ref{lem:Sgci},~\ref{lem:Bgt},~\ref{lem:Bg},
and~\ref{lem:Cg}, 
the bounding series for graphs with fixed face-width 
is of strong approximate-singular order $\alpha+1/4$
instead of $\alpha+1/2$ (which is still fine as long as the order is larger than $\alpha$), 
since there is a third case where the distinguished
non-contractible curve crosses a crosscap. 

\newcommand{\ccc}{{\hat{c}}}
\paragraph{Euler genus.}
Our results also show that, for each integer $\kappa\geq 0$ the number $e^{(\kappa)}_n$ of graphs with $n$ vertices and Euler genus $\kappa$ satisfies:
\begin{eqnarray}\label{eq:eulergenus}
e^{(\kappa)}_n \sim {\ccc}^{(\kappa)} n^{5(\kappa-2)/4-1}\gamma^n n!
\end{eqnarray}
where ${\ccc}^{(\kappa)}=\tilde{c}^{(\kappa)}$ if $\kappa$ is odd, and ${\ccc}^{(\kappa)}=c^{(\kappa/2)}+\tilde{c}^{(\kappa)}$ if $\kappa$ is even. This result follows from Lemma~\ref{lem:Cg}, from the analogous result for non-orientable surfaces,  and from the fact (shown in~\cite{RoVi90}) that for each $\kappa$ every graph of Euler genus $\kappa$ and face-width more than $2\kappa+3$ can be embedded either in the non-orientable surface $\mathbb{N}_{\kappa}$ or in the orientable surface $\mathbb{S}_{\kappa/2}$, but not both.


\section{Random graphs of  genus $g$}\label{sec:parameters}

In this section we analyze several fundamental parameters of
graphs of genus $g$ and derive limit distribution laws for them.
In all cases the limit laws do not depend on the genus, a
phenomenon that has been observed previously for \emph{maps} on
surfaces (see, for instance, \cite{GR94}). When we say that an
event holds \emph{with high probability} we mean that the
probability of the event tends to 1 as $n$ tends to infinity.
The variance of a random variable $X$ is denoted by $\sigma^2(X)$, and a sequence of random variables $(X_n)_{n\geq 1}$ is called \emph{asymptotically normal}
if $(X_n-{\bf E}[X_n])/\sigma^2(X_n)$ converges in distribution to a standard Gaussian random variable
$\mathcal{N}(0,1)$ (see~\cite[Part C]{FlaSe}).

We start with two basic parameters in order to motivate the
general analysis. We show that, as for planar graphs, the
number of edges is asymptotically normal and the number of
connected components is asymptotically Poisson distributed.

\newcommand{\E}{\hbox{\bf E}}
\newcommand{\p}{\hbox{\bf P}}

\begin{theorem}
The number of edges $X_n$ in a random graph of fixed genus $g$ with $n$
vertices is asymptotically normal and
$$
\E(X_n) \sim \kappa n , \qquad \sigma^2(X_n) \sim \lambda n
$$
where $\kappa \approx 2.21326$ and $\lambda \approx 0.43034$ are
the same constants as for planar graphs.
\end{theorem}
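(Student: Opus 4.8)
\medskip\noindent\emph{Proof plan.} The plan is to run a bivariate quasi\nobreakdash-powers argument on the series $F_g(x,y)$ enumerating graphs embeddable in $\mathbb{S}_g$ (by vertices and edges), treating $y$ as a secondary parameter near $1$. Write $\rho(y)$ for the singularity function of $x\mapsto F_g(x,y)$, and recall that $[x^n]F_g(x,1)$ equals $a_n^{(g)}/n!$, which by Theorem~\ref{theo:main} is $\sim c^{(g)}n^{5(g-1)/2-1}\gamma^n$ with $c^{(g)}>0$ and $\gamma=1/\rho(1)$. The first step is to identify $\rho(y)$, in a neighbourhood of $y=1$, with the planar singularity function $\rho_0(y)$ of~\cite{gimeneznoy}: every composition scheme used in Section~\ref{sec:graphs} --- from $3$\nobreakdash-connected to $2$\nobreakdash-connected (Lemmas~\ref{lem:Tg}--\ref{lem:Bgt}), to connected (Lemmas~\ref{lem:Cgfwt}--\ref{lem:Cg}), to arbitrary graphs (Lemma~\ref{lem:Gg}), and finally to graphs embeddable in $\mathbb{S}_g$ (Lemma~\ref{lem:Fg}) --- is critical, so at each stage the dominant singularity in $x$ stays at the value dictated by the planar case; since $F_g=\sum_{i\leq g}G_i$ has non\nobreakdash-negative coefficients and each $G_i$ has singular points exactly on $x=\rho_0(y)$, we get $\rho(y)=\rho_0(y)$ near $y=1$. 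From~\cite{gimeneznoy} the function $\rho_0$ is analytic near $y=1$ and $\rho_0'(1)\neq0$ (the expected number of edges of a random planar graph is linear).

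Next I would convert the approximate\nobreakdash-singular information of Lemma~\ref{lem:Fg} into a \emph{uniform} coefficient estimate. By that lemma, $F_g(x,y)$ is of strong approximate\nobreakdash-singular order $\alpha=\tfrac52(1-g)$ around $(\rho(1),1)$ and at the nearby singular points, i.e.\ it is sandwiched coefficientwise between a series $D(x,y)$ of strong singular order $\alpha$ and $D(x,y)-E(x,y)$ with $E$ of strictly higher order $\beta>\alpha$, all with singularity function $\rho(y)$. Since $E$ does not affect the leading singular term, $D$ and $D-E$ share the same dominating coefficient, which depends analytically and non\nobreakdash-trivially on $y$; hence, by the uniform version of the transfer theorem underlying Corollary~\ref{coro:transfer} (the strong analytic continuation to a domain $\Omega$ enlarged in both $x$ and $y$ is exactly what makes this uniform), there is a function $c(y)$, analytic near $y=1$ with $c(1)=c^{(g)}>0$, such that
\begin{equation*}
[x^n]F_g(x,y)=c(y)\,\rho(y)^{-n}\,n^{-\alpha-1}\bigl(1+o(1)\bigr)
\end{equation*}
uniformly for $y$ in a complex neighbourhood of $1$.

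Then I would form the probability generating function $p_n(y)=\E\bigl(y^{X_n}\bigr)=[x^n]F_g(x,y)/[x^n]F_g(x,1)$, which gives
\begin{equation*}
p_n(y)=\frac{c(y)}{c(1)}\Bigl(\frac{\rho(1)}{\rho(y)}\Bigr)^{n}\bigl(1+o(1)\bigr)
\end{equation*}
uniformly near $y=1$. This is a quasi\nobreakdash-powers scheme with non\nobreakdash-zero variability $-\rho'(1)/\rho(1)\neq0$, so Hwang's quasi\nobreakdash-powers theorem (see~\cite{FlaSe}) yields that $X_n$ is asymptotically normal with $\E(X_n)\sim\mu n$ and $\sigma^2(X_n)\sim\lambda n$, where $\mu=-\rho'(1)/\rho(1)$ and $\lambda$ is given by the standard second\nobreakdash-order formula in $\rho(1),\rho'(1),\rho''(1)$. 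As $\rho=\rho_0$ is the planar singularity function, these are precisely the planar constants $\kappa\approx2.21326$ and $\lambda\approx0.43034$ of~\cite{gimeneznoy}. The same argument applied to $G_g(x,y)$ via Lemma~\ref{lem:Gg} gives the statement for graphs of genus exactly $g$, with identical constants.

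The genuinely technical point is the uniformity in $y$: one must upgrade the pointwise estimate of Theorem~\ref{theo:transfer}/Corollary~\ref{coro:transfer} to one valid uniformly over a complex neighbourhood of $y=1$ with analytic, non\nobreakdash-vanishing constant $c(y)$. This rests on the ``strong'' part of the approximate\nobreakdash-singular expansions and on the observation that the error series $E(x,y)$ contributes a uniformly negligible term; after that, the quasi\nobreakdash-powers theorem is applied as a black box, and the verification that $\rho'(1)\neq0$ is inherited from~\cite{gimeneznoy}. Everything else is routine analytic combinatorics (Chapter~IX of~\cite{FlaSe}).
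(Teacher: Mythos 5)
Your proposal is, at the core, the paper's own proof: invoke the approximate-singular expansion (Lemma~\ref{lem:Gg} or Lemma~\ref{lem:Fg}), observe that the singularity function $\rho(y)$ coincides with the planar one, and feed this into the Quasi-powers Theorem to get asymptotic normality with the same $\kappa,\lambda$. The one place where you take a slightly different route is the final step. The paper applies the Quasi-powers Theorem to the two sandwiching series $f$ and $h$ (each of which has an \emph{honest} strong singular expansion and hence honest complex uniformity) and then squeezes the resulting cumulative distribution functions of $X_n$ between two sequences converging to the same Gaussian; that squeeze needs only the real-positivity of the coefficientwise bounds plus the fact that $[x^n]f(x,1)\sim[x^n]h(x,1)$. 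You instead try to upgrade the sandwich $D-E\preceq F_g\preceq D$ into a single uniform asymptotic $[x^n]F_g(x,y)=c(y)\rho(y)^{-n}n^{-\alpha-1}(1+o(1))$ ``for $y$ in a complex neighbourhood of $1$''. That last claim does not follow from the sandwich: a coefficientwise inequality between power series with non-negative coefficients gives a numerical inequality between $[x^n]$-coefficients only after specialization at \emph{positive real} $y$, so the ``strong'' analytic continuation of $D$ and $E$ does not transfer to $F_g$ in a complex $y$-neighbourhood. This is not fatal --- uniformity for real $y$ near $1$ (i.e.\ convergence of moment generating functions) already gives the CLT --- but as written the complex-uniformity step is an overclaim. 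The paper's formulation, applying quasi-powers to $f$ and $h$ and squeezing the laws of $X_n$ afterwards, is the cleaner way to avoid this pitfall, and you may want to restate your argument along those lines (or explicitly restrict to real $y$ and cite the MGF form of the limit theorem).
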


\begin{proof}
By Lemma \ref{lem:Gg}, the generating function $G_g(x,y) = \sum
g_{n,k} y^k {x^n/n!}$ of graphs of genus $g$ counted according to
the number of vertices and edges is of approximate-singular order
$\alpha$. This means that there exist sequences $f_{n,k}$ and
$h_{n,k}$ with
$$
    f_{n,k} \le g_{n,k} \le h_{n,k}
$$
such that  $f(x,y) = \sum f_{n,k} y^k x^n/n!$ and $h(x,y) = \sum
h_{n,k} y^k x^n/n!$ have singular expansions of order $\alpha$
with same singularity function and the same leading coefficients.
By the Quasi-powers Theorem \cite{FlaSe}, the random variables
with probability generating functions
$$
    {[x^n] f(x,y) \over [x^n] f(x,1)}, \qquad
    {[x^n] h(x,y) \over [x^n] h(x,1)}.
$$
are asymptotically normal. It follows that $X_n$, whose
distribution is given by $[x^n]G(x,y)/[x^n]G(x,1)$, also converges
to a normal law. The expectation and variance are determined
by the singularity function $\rho(y)$ as
$$
    \mu = -{\rho'(1) \over \rho(1)}, \qquad
    \sigma^2 = -{\rho''(1) \over \rho(1)} -{\rho'(1) \over
    \rho(1)} + \left({\rho'(1) \over  \rho(1)}\right)^2.
$$
By Lemma~\ref{lem:Gg}, $\rho(y)$ is independent of the genus and
we are done.
\end{proof}

Following the lines of the former proof, one can also show that
several basic parameters, such as the number of blocks, the number
of cut vertices, and the number of \emph{appearances} of a fixed
planar graph (see \cite{gimeneznoy} for a precise definition),
follow a normal law with the same moments as for planar graphs
\cite{GNR}. In order to avoid repetition we omit the corresponding
proofs. We remark in particular that, given a fixed planar graph
$H$, a random graph of genus $g$ contains a subgraph isomorphic to
$H$ with high probability; in fact, it contains a linear number of
disjoint copies of $H$.

\begin{theorem}\label{thm:components}
The number of connected components in a random graph of fixed genus $g$
is distributed asymptotically as $1+X$, where $X$ is a Poisson law
with parameter $\nu \approx 0.037439$, same as for planar graphs.
In particular, the probability that a random graph of genus $g$ is
connected is asymptotically $e^{-\nu}$.
\end{theorem}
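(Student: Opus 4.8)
The plan is to extract the limit law of the number of connected components from the exponential-formula relation $G_g(x,y,v) = \sum_{k} v^k [\text{graphs of genus }g\text{ with }k\text{ components}]$, where the relevant generating function is obtained from the connected series. Recall from Lemma~\ref{lem:Gg} and its proof that, fixing $y=1$ and writing $G_i=G_i(x)$, $C_i=C_i(x)$ for the (univariate) series of graphs, resp.\ connected graphs, of genus $i$, the decomposition of a genus-$g$ graph into its connected components gives
\begin{equation}\label{eq:compdecomp}
G_g(x,v) = \sum_{j\geq 0} \frac{v^j}{j!}\Big(\sum_{i_1+\cdots+i_j=g} C_{i_1}(x)\cdots C_{i_j}(x)\Big),
\end{equation}
where $v$ marks the number of components. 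First I would split this sum according to how many of the $i_\ell$ are positive: since $\sum i_\ell=g$ is fixed, at most $g$ of the components are non-planar, and the dominant contribution comes from the term with exactly one non-planar component of genus $g$ and an arbitrary number of planar components. Concretely, the coefficient of $v^j$ in \eqref{eq:compdecomp}, extracting the $j-1$ planar factors and the single genus-$g$ factor, is $\frac{1}{(j-1)!}C_g(x)\,C_0(x)^{j-1}$ up to a remainder whose terms carry at least two non-planar factors. By Lemma~\ref{lem:Cg}, $C_g(x)$ is of approximate-singular order $\alpha$, and each product of two or more $C_i$ with $i\geq 1$ is of approximate-singular order at most $\alpha+5/2$ (two factors each at most $3/2-5/2=\ldots$, as in the proof of Lemma~\ref{lem:Gg}), hence negligible coefficient-wise. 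Therefore, coefficient-wise and asymptotically,
\begin{equation}\label{eq:Gapprox}
G_g(x,v) \sim v\, C_g(x)\, e^{v C_0(x)} \cdot \frac{[\text{correction from multiplicities}]}{\cdot}
\end{equation}
— more precisely $G_g(x,v)$ agrees, up to terms of strictly higher approximate-singular order, with $v\,C_g(x)\exp\big(v\,C_0(x)\big)$, since summing $\frac{1}{(j-1)!}C_0^{j-1}v^{j}$ over $j\ge 1$ reproduces $v e^{vC_0}$.

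Next I would perform the singularity analysis. By Lemma~\ref{lem:Fg} and Lemma~\ref{lem:Gg}, the series $C_0(x)$ and $C_g(x)$ share the singularity $\rho$ of planar connected graphs, and $C_0(\rho)=\nu$ is finite (this is the constant from \cite{gimeneznoy}, $\nu\approx 0.037439$, equal to the value of the connected planar series at its singularity). The probability generating function of the number of components of a random genus-$g$ graph with $n$ vertices is
\[
p_n(v) = \frac{[x^n]G_g(x,v)}{[x^n]G_g(x,1)}.
\]
Using the approximate-singular expansion above: the leading singular behaviour of $[x^n]G_g(x,v)$ is governed by $v\,C_g(x)\,e^{vC_0(x)}$, whose singular expansion is obtained by multiplying the singular expansion of $C_g$ (of order $\alpha$, with dominant term a constant times $X^{-2\alpha}$ or $X^{-2\alpha}L$ according to parity) by the \emph{analytic} factor $v\,e^{v\,C_0(x)}$ evaluated at $x=\rho$, namely $v\,e^{v\nu}$. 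Hence $[x^n]G_g(x,v) \sim v\,e^{v\nu}\cdot(\text{const})\cdot \rho^{-n} n^{-\alpha-1}$ (times a logarithmic factor in the torus/Klein-bottle cases, handled uniformly by Corollary~\ref{coro:transfer}), and therefore
\[
p_n(v)\;\longrightarrow\; \frac{v\,e^{v\nu}}{1\cdot e^{\nu}} \;=\; v\,e^{\nu(v-1)},
\]
which is exactly the probability generating function of $1+X$ with $X\sim\mathrm{Poisson}(\nu)$. Convergence of probability generating functions on a neighbourhood of $v=1$ (uniformly in $v$ on compacts, which one gets because the analytic factor $v e^{vC_0(x)}$ depends smoothly on $v$ and the approximate-singular framework of Corollary~\ref{coro:transfer} applies for each fixed $v>0$) yields convergence in distribution, and in particular $\mathbf{P}[\text{connected}] = p_n(0+)\to e^{-\nu}$ — wait, more carefully $\lim_{n}\mathbf{P}[\text{genus-}g\text{ graph is connected}]=\lim_n [v^1]$-part $=e^{-\nu}$ by setting $v\to 0$ in the limit pgf, giving $e^{\nu(0-1)}\cdot 0 + $ the coefficient extraction; concretely $\mathbf{P}[\text{1 component}]=[v^1]p_n(v)\to[v^1](v e^{\nu(v-1)})=e^{-\nu}$.

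The main obstacle I anticipate is making rigorous the claim that the \emph{approximate}-singular order of $G_g(x,v)$ — i.e.\ the squeezing between two computable series — survives the multiplication by the entire-in-$v$, analytic-in-$x$-at-$\rho$ factor $e^{vC_0(x)}$ and the summation over the number of planar components, uniformly enough in $v$ near $1$ to conclude convergence of pgf's rather than merely of first moments. This requires checking that Lemma~\ref{lem:sing_prod} (product of singular expansions) applies with the bounding series $D(x,v)$ and $E(x,v)$ chosen so that $D-E\preceq G_g(\cdot,v)\preceq D$ with $D,E$ having matching singularity function independent of $v$ and orders $\alpha<\beta$; since $C_0$ has approximate-singular order $5/2>0$ it \emph{converges} at $\rho$, so $e^{vC_0(x)}$ is bounded and analytic near $\rho$, and the geometric-type sum over planar components (which is really just the exponential) is controlled by the planar result of \cite{gimeneznoy}. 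Once this uniformity is in hand, the Poisson limit and the value $e^{-\nu}$ for the probability of connectedness follow by the standard continuity theorem for probability generating functions, exactly paralleling the planar argument in \cite{gimeneznoy,GNR}.
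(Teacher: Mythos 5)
Your proposal is correct and follows essentially the same route as the paper: decompose a genus-$g$ graph into its connected components, observe (via Lemma~\ref{lem:Gg}) that the dominant contribution comes from exactly one component of genus $g$ plus planar components, reduce to the generating function $v\,C_g(x)e^{vC_0(x)}$, and use the singular expansion of $C_g$ together with $C_0(\rho)=\nu$. The only difference is cosmetic: the paper extracts $\mathbf{P}[\text{exactly }k{+}1\text{ components}]$ directly from $C_g(x)C_0(x)^k/k!$ for each fixed $k$, which bypasses any concern about uniformity in $v$, whereas you compute the probability generating function and invoke the continuity theorem — for which pointwise convergence on $[0,1)$ already suffices, so the uniformity worry you flag at the end is not actually needed.
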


\begin{proof}
As shown in the proof of Theorem~\ref{lem:Gg}, a graph of genus
$g$ has a unique connected component of genus $g$ with high
probability, and the remaining components are planar. Hence, in
order to study the number of components it is enough to work with
the generating function
$$
    uC_g(x) e^{uC_0(x)},
$$
the first two factors encoding the component of genus $g$, the
exponential term encoding the planar components. The generating
function of graphs with exactly $k+1$ components is then $C_g(x)
C_0(x)^k/k!$. By Lemma~\ref{lem:Cg}, the series $C_g(x)$ is
dominated coefficientwise by series having  a singular expansion
of the form
$$
    a  X^\alpha + O(X^{\alpha+1}),
$$
where $X=\sqrt{1-x/\rho}$. On the other hand, we have $C_0(x) =
C_0 + O(X)$ (see \cite{gimeneznoy}; because $\alpha<0$ we only
need the constant term in the singular expansion of $C_0$). Then
the probability of a random graph having exactly $k+1$ components
is asymptotically equal to
$$
   { [x^n]C_g(x) C_0(x)^k/k! \over [x^n] C_g(x) e^{C_0(x)}}
   \sim {aC_0^k/k! \over a e^{C_0}} = e^{-C_0} {C_0^k \over k!}.
   $$
This is precisely a Poisson distribution with parameter $C_0$, as
for planar graphs.
\end{proof}

A similar analysis as that in the previous proof shows that there
is a unique giant component of genus $g$.

\begin{theorem}\label{thm:largestC}
Let $L_n$ denote the size of the largest connected component in a
random graph of fixed genus $g$ with $n$ vertices, and let $M_n = L_n
-n$ be the number of vertices not in the largest component. Then
$$
    \p(M_n = k) \sim p \cdot g_k {\gamma^{-k} \over k!},
$$
where $p$ is the probability of a random planar graph being
connected, $g_k$ is the number of planar graphs with $k$ vertices,
and $\gamma$ is the planar growth constant.
\end{theorem}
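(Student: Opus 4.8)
The plan is to exploit the structural picture behind Lemma~\ref{lem:Gg} and Theorem~\ref{thm:components}: with high probability a random graph of genus $g$ is the disjoint union of a unique connected component of genus $g$ together with a planar graph, and since that genus-$g$ component has linear size while every planar component is much smaller, the largest component \emph{is} the genus-$g$ one. On this event the number $M_n$ of vertices lying outside the largest component equals the number of vertices occupied by the planar part. Writing $G_0(x)=e^{C_0(x)}$ for the generating series of planar graphs and recalling from Lemma~\ref{lem:Gg} that $G_g(x)=C_g(x)G_0(x)+K_g(x)$ with $K_g$ of strong approximate-singular order $\alpha+5/2$, the decomposition of a genus-$g$ graph with a unique genus-$g$ component into that component (on $n-k$ vertices) and a planar graph on the other $k$ vertices gives, up to a negligible error,
\begin{equation*}
\p(M_n=k)\ \sim\ \frac{[x^{n-k}]C_g(x)\,[x^k]G_0(x)}{[x^n]G_g(x)}.
\end{equation*}

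Then I would evaluate the three coefficients as $n\to\infty$ with $k$ fixed. We have $[x^k]G_0(x)=g_k/k!$ by the definition of $g_k$. By Corollary~\ref{coro:transfer} applied via Lemma~\ref{lem:Cg}, $[x^m]C_g(x)\sim c\,m^{-\alpha-1}\gamma^m$, so $[x^{n-k}]C_g(x)/[x^n]C_g(x)\to\gamma^{-k}$. Finally, since $G_0$ is analytic and nonzero at its singularity $\rho=1/\gamma$ (it has positive singular order $5/2$), multiplying the singular expansion of $C_g$ by $G_0$ merely scales the dominant coefficient by $G_0(\rho)$, so $[x^n]G_g(x)\sim[x^n]\bigl(C_g(x)G_0(x)\bigr)\sim G_0(\rho)\,[x^n]C_g(x)$; and, exactly as in the proof of Theorem~\ref{thm:components}, $G_0(\rho)=e^{C_0(\rho)}=e^{\nu}=1/p$. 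Combining the three asymptotics gives $\p(M_n=k)\sim \gamma^{-k}\cdot(g_k/k!)\cdot p$, which is the claimed estimate; as a consistency check, $\sum_{k\ge0}p\,g_k\gamma^{-k}/k!=p\,G_0(\rho)=1$.

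The real content is controlling the two errors hidden in the first displayed ``$\sim$''. Genus-$g$ graphs with two or more non-planar components are bounded coefficientwise by $K_g$, which by Lemma~\ref{lem:Gg} has singular order $\alpha+5/2$ and so contributes to $\p(M_n=k)$ only an $O(n^{-5/2})$ term relative to the $\Theta(1)$ main term. The more delicate point is to show that, conditioned on $M_n=k$, the event that the largest component is in fact a planar component of size $\geq n-k$ is negligible: in that situation the unique non-planar component has genus $g$ and at most $k$ vertices, hence contributes only $O(1)$ choices, while the planar part is an arbitrary planar graph on $n-j$ ($j\le k$) vertices; since planar connected graphs on $m$ vertices number $\Theta(m^{-7/2}\gamma^m m!)$ whereas connected genus-$g$ graphs number $\Theta(m^{-\alpha-1}\gamma^m m!)$ with $-\alpha-1\ge-1$ for $g\ge1$, this contribution is smaller than the main term by a negative power of $n$. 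I expect this last estimate --- checking that the giant genus-$g$ component truly dominates in size --- to be the only delicate part; everything else is a direct application of the singularity-analysis toolkit of Section~\ref{sec:sing}.
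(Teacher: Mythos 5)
Your proof is correct and follows essentially the same route as the paper: decompose a typical genus-$g$ graph into its unique (giant) connected genus-$g$ component plus a planar remainder, so that $\p(M_n=k)$ is asymptotically $\binom{n}{k}C_{n-k}\,g_k/G_n$, and then apply the transfer estimates; your formula $[x^{n-k}]C_g(x)\,[x^k]G_0(x)/[x^n]G_g(x)$ is exactly this quantity rewritten in EGF coefficients. You are a bit more explicit than the paper about controlling the two error terms (multiple non-planar components, and a planar component overtaking the genus-$g$ one), but the core argument and the identification $p = 1/G_0(\rho) = C/G$ coincide with the paper's.
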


\begin{proof}
According to the results of the previous section, the number $G_n$
of graphs of genus $g$ grows like
$$
    G_n \sim G \cdot n^{-\alpha-1} \gamma^n n!,
$$
and the number $C_n$ of connected graphs of genus $g$ grows like
$$
    C_n \sim C \cdot n^{-\alpha-1} \gamma^n n!.
$$
Using again the fact that there is a unique component of genus $g$
with high probability, we find that the probability that $M_n=k$
is asymptotically equal to
$$
\binom{n}{k}\frac{C_{n-k} g_k}{G_n},
$$
since there are ${n \choose k}$ ways of choosing the labels of the
vertices not in the largest component, $C_{n-k}$ ways of choosing
the largest component, and $g_k$ ways of choosing the complement.
Using the previous estimates we get
$$
    \p(M_n = k) \sim {C \over G} \, g_k {\gamma^{-k} \over k!}.
$$
But $C/G$ is the asymptotic probability of a graph of genus $g$
being connected, and by Theorem \ref{thm:components} it is the
same as for planar graphs.
\end{proof}

In the next two results we analyze the size of the largest block
and the size of the largest 3-connected component. For the precise
form of the Airy law of map type, a continuous distribution
defined in terms of the Airy function, and the computation of the
parameters for planar graphs, we refer to \cite{airy} and
\cite[Section 5]{GNR}.

\begin{theorem}\label{thm:largestB}
The size $X_n$ of the largest block in a random connected graph of fixed 
genus $g$ with $n$ vertices follows asymptotically an Airy law of
the map type, with the same parameters as for planar graphs. In
particular
$$
\E(X_n) \sim \alpha n,
$$
where $\alpha \approx 0.9598$, and the size of the second largest
block is $o(n^{\frac{2}{3}+\epsilon})$, for any $\epsilon>0$. Moreover the largest block has genus $g$
with high probability.
\end{theorem}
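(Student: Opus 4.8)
The plan is to show that the largest block of a random connected graph of genus $g$ is, with high probability, the unique block of genus $g$, and that the size of that block obeys the same Airy law as the largest block of a random connected planar graph. By Lemma~\ref{lem:Cg} the series $C_g^{\fw=1}(x,y)$ is negligible against $C_g(x,y)$, so with high probability a random connected graph of genus $g$ has face-width at least $2$; by Theorem~\ref{theo:Rob} it then has a \emph{unique} block of genus $g$, all other blocks being planar, and (whether ``genus $g$'' is read as ``genus exactly $g$'' or ``embeddable in $\mathbb{S}_g$'') the estimates of Lemma~\ref{lem:Cg} show the genus is exactly $g$ with high probability. Hence it suffices to analyze the number of vertices of this genus-$g$ block.

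To do so we refine the block decomposition~\eqref{eq:conn2conn} by a variable $u$ marking the vertices of the genus-$g$ block: since a graph of $\cC_g^{\fw\geq 2}$ is obtained from a graph of $\cB_g^{\fw\geq 2}$ by substituting a vertex-pointed connected planar graph at each of its vertices, marking each such vertex by $u$ gives
$$
C_g^{\fw\geq 2}(x,y,u)=B_g^{\fw\geq 2}\bigl(u\,F(x,y),\,y\bigr),
$$
where $F(x,y)=xC_0'(x,y)$ is the same series of vertex-pointed connected planar graphs that governs the planar largest-block analysis of~\cite{gimeneznoy,airy}. By Lemma~\ref{lem:Cgfwt} this composition is critical: the singularity $z_0$ of $z\mapsto B_g^{\fw\geq 2}(z,y)$ equals $F(x_0,y)$ with $x_0$ the singularity of $x\mapsto F(x,y)$, and $F$ has there a log-free singular expansion of order $3/2$. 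Consequently, as $u\to 1$, the dominant singularity of $x\mapsto B_g^{\fw\geq 2}(uF(x,y),y)$ migrates between the point where $uF=z_0$ and the fixed singularity $x_0$ of $F$, the two coalescing exactly at $u=1$. This is precisely the coalescing-saddle configuration of Banderier--Flajolet--Schaeffer--Soria~\cite{airy}, with the \emph{inner} function $F$ of square-root (order $3/2$) type, which produces an Airy law of the \emph{map} type for the number of vertices of the genus-$g$ block.

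For the constants, one checks that $n/k$, with $k$ the number of vertices of the genus-$g$ block, converges to the expected size of one $F$-decoration, a quantity determined entirely by the behaviour of $F$ near $x_0$; a short computation (the order of the outer series $B_g$ cancels out) gives $\E(X_n)\sim \lambda n$ with $\lambda=z_0/\bigl(x_0\,F'(x_0,1)\bigr)$, where $F'$ is the $x$-derivative, finite at $x_0$ since $F$ has order $3/2$. Since $F$, $x_0$ and $z_0=F(x_0,1)$ are exactly those of the planar analysis, this is the planar value $\lambda\approx 0.9598$, and likewise the whole limit law and all its parameters agree with the planar case, $g$ entering only through sub-dominant corrections. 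One must still run this inside the approximate-singular formalism: writing the sandwich $D-E\preceq B_g^{\fw\geq 2}\preceq D$ of Lemma~\ref{lem:Bgt} ($D$ of order $\alpha$, $E$ of strictly larger order, same singularity function) and substituting $z\mapsto uF(x,y)$ — which preserves $\preceq$ since $F$ has non-negative coefficients — yields $D(uF,y)-E(uF,y)\preceq C_g^{\fw\geq 2}(x,y,u)\preceq D(uF,y)$; the total mass of the $E$-term is negligible against that of $D(F,y)\sim C_g^{\fw\geq 2}(x,y,1)$, so the limiting distribution of the genus-$g$ block size is the one extracted from the \emph{explicit} (algebraic) series $D(uF,y)$, to which the coalescing-saddle analysis of~\cite{airy} applies directly.

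Finally, the blocks other than the genus-$g$ block are exactly the blocks of the planar pieces hanging off its vertices, and these pieces are again governed by $F$ with its square-root singularity; hence, as in the planar largest-block analysis of~\cite{airy}, the largest of them has size $o(n^{2/3+\epsilon})$ for every $\epsilon>0$ with high probability. Since the genus-$g$ block concentrates at $\lambda n$ with fluctuations of order $n^{2/3}$ (by the Airy law just established), it is with high probability strictly larger than every other block, which both identifies the largest block as the genus-$g$ one and gives the stated bound on the second largest block. The main obstacle is the passage through the approximate-singular formalism: one must verify that, after the critical substitution $z\mapsto uF(x,y)$, the sandwiching series still display the coalescence at $(x_0,1)$ and that the error series $E$ perturbs neither the Airy limit nor the constant $\lambda$; the rest is a transcription of the planar arguments of~\cite{gimeneznoy,airy}.
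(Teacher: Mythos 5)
Your proposal is correct and follows essentially the same route as the paper: reduce to the critical composition scheme $C_g^{\fw\geq 2}=B_g^{\fw\geq 2}(F(x,y),y)$ via the face-width dichotomy and Theorem~\ref{theo:Rob}, then invoke the coalescing-saddle/Airy machinery of~\cite{airy}, noting that all parameters are governed by the inner series $F$ and hence coincide with the planar case (your $\lambda=z_0/(x_0F'(x_0,1))$ is algebraically the same as the paper's $-F_0/F_2$). You are, if anything, somewhat more explicit than the paper about the $u$-marking of the scheme and about how the $D/E$ sandwich of the approximate-singular formalism has to be carried through the substitution; the paper glosses over this by citing ``general principles'' from~\cite{airy}.
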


\begin{proof}
By the results of the previous section, we know that with high
probability a connected graph of genus $g$ has a unique block of
genus $g$, and the remaining blocks are planar. As we are going to
show, the unique block of genus $g$ is the largest block with high
probability.

Equation (\ref{eq:conn2conn}) encodes precisely this statement.
Since almost all 2-connected graphs have face-width $\ge 2$, it is
enough to consider the simplified composition  scheme $B_g(F(x))$,
where $F(x)=xC'(x)$ is the generating function of vertex-pointed
connected planar graphs. In the terminology of \cite{airy} this
scheme is \emph{critical} (see Lemma~\ref{lem:comp}), since the
evaluation $F(\rho)$ of $F(x)$ at its singularity is equal to the
singularity of $B_g(x)$, which is the same as the singularity of
$B_0(x)$.

By general principles (see Theorem 12 and Appendix D in
\cite{airy}) it follows that the size of the block of genus $g$
follows a continuous  Airy law.  The parameters of the law depend
only on the singular coefficients of $F(x)$, hence they are the
same as for planar graphs. In particular $\E(X_n) \sim \alpha n$,
where $\alpha = -F_0/F_2$ and $F(x) = F_0 + F_2(1-x/\rho) +
O(1-x/\rho)^{3/2}$ is the singular expansion of $F(x)$ at $\rho$.
\end{proof}

We can also adapt the proof in \cite{GNR} for the largest
3-connected component. The key point is that now the relevant
composition scheme is $T_g(x,D(x,y))$, instead of $T_0(x,D(x,y))$
as for planar graphs. Again, this is because with high probability
a 2-connected graph of genus $g$ has face-width $\ge 3$, hence it
has a unique 3-connected component of genus $g$ and the remaining
3-connected components are planar. The most technical part of the
proof in the planar case is to prove that two different
probability distributions for 2-connected planar graphs are
asymptotically equal, as the number of vertices and edges tends to
infinity at a given ratio (see \cite[Section 6.3]{GNR}). One
distribution comes from planar networks counted by the number of
edges with an appropriate weight on vertices, but since we replace
edges of the 3-connected component of genus $g$ by \emph{planar}
networks, we are dealing with the same probability distribution.
The second distribution comes from extracting the largest block in random planar
graphs with a given number of vertices. 
But we have shown in the previous theorem that the distribution of the largest block 
is asymptotically independent of the genus.

The rest of the proof (see \cite[Section 6.4]{GNR}) is easily
adapted. For computing the moments we need the asymptotic expected
number of edges in a random 3-connected graph of genus $g$, but
this is the same as for 3-connected planar graphs, since it
depends only on the singularity function of $T_g(x,z)$, which we
have proved does not depend on $g$. In conclusion, we obtain the
following result, which is analogous to Theorem 6.1 from
\cite{GNR}.

\begin{theorem}\label{thm:largestT}
The size $X_n$ of the largest 3-connected component in a random
connected graph of fixed genus $g$ with $n$ vertices follows
asymptotically an Airy law of the map type, with the same
parameters as for planar graphs. In particular
$$
\E(X_n) \sim \alpha_2 n,
$$
where $\alpha_2 \approx 0.7346$, and the size of the second largest
3-connected component is  $o(n^{\frac{2}{3}+\epsilon})$, for any $\epsilon>0$. Moreover the largest
3-connected component  has genus $g$ with high probability.
\end{theorem}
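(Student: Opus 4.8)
The plan is to reduce the statement to a \emph{critical composition scheme of map type} and then to invoke the general singularity‑analysis machinery of~\cite{airy}, exactly as in the planar case of~\cite{GNR}, with the genus‑$g$ objects inserted only at the top level of the decomposition.

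First I would identify the relevant scheme. By Equation~\eqref{eq:2conn3conn} we have $B_g^{\fw\geq 3}(x,y)=T_g^{\fw\geq 3}(x,D(x,y))$, and nothing is lost by passing to the simplified scheme $T_g(x,D(x,y))$: by Lemmas~\ref{lem:Bgt} and~\ref{lem:Bg} the series counting $2$‑connected graphs of genus $g$ and face‑width $\leq 2$ (respectively connected graphs of genus $g$ and face‑width $\leq 1$) is bounded coefficientwise by a series of strong approximate‑singular order $\alpha+1/2>\alpha$, hence is negligible. Consequently a random $2$‑connected graph of genus $g$ has face‑width at least $3$ with high probability, so by Theorem~\ref{theo:Rob} it has a unique $3$‑connected component of genus $g$, all other $3$‑connected components being planar; combined with Theorem~\ref{thm:largestB} (the unique block of genus $g$ of a random connected graph of genus $g$ is the largest block w.h.p.), this shows that the largest $3$‑connected component of a random connected graph of genus $g$ is w.h.p.\ the genus‑$g$ $3$‑connected component nested inside the genus‑$g$ block. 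It therefore suffices to analyse the size of the extracted ``core'' $T_g$ in the scheme $T_g(x,D(x,y))$.

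Next I would check the Airy‑schema hypotheses. The composition is critical: by the result of~\cite{BeGa} used in the proof of Lemma~\ref{lem:Bgtt}, a singular point $(x_0,y_0)$ of the network series $D(x,y)$ is mapped by $D$ to a singular point of~\eqref{eq:systrs}, hence of $T_g^{\fw\geq 3}(x,w)$, and $D(x,y)$ has a log‑free singular expansion of order $3/2$ at $(x_0,y_0)$. These are precisely the conditions under which Theorem~12 and Appendix~D of~\cite{airy} yield that the size of the extracted core converges to a continuous Airy law of the map type with parameters depending only on the singular coefficients of $D(x,y)$ --- which are the planar ones --- and also give the $o(n^{2/3+\epsilon})$ bound on the second‑largest $3$‑connected component together with the fact that the largest one has genus $g$ w.h.p. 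One then has to confirm that these conclusions survive when $T_g^{\fw\geq 3}$ is replaced by the two coefficientwise‑bounding series supplied by Lemma~\ref{lem:Tg}, which share the singularity function of~\eqref{eq:systrs}; this is exactly what the approximate‑singular order framework is designed to handle.

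Finally, for the moment statement $\mathbf{E}(X_n)\sim\alpha_2 n$ I would adapt Sections~6.3--6.4 of~\cite{GNR}. The one genuinely technical point there is the identification of two distributions on $2$‑connected planar graphs --- one coming from planar networks weighted by edges, one from extracting the largest block of a random planar graph --- as asymptotically equal when $n$ and $m$ grow at a fixed ratio; in our setting the networks substituted for the edges of the genus‑$g$ $3$‑connected component are literally planar networks, so the first distribution is the planar one, and the second is asymptotically genus‑independent by Theorem~\ref{thm:largestB}. Computing the constant also needs the asymptotic expected number of edges of a random $3$‑connected graph of genus $g$, which depends only on the singularity function of $T_g(x,z)$; by Lemma~\ref{lem:Tg} that function is the one of~\eqref{eq:systrs}, independent of $g$, so $\alpha_2$ is the planar constant. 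The main obstacle is not the Airy machinery, which is black‑boxed from~\cite{airy}, but the bookkeeping required to show that all face‑width‑deficient and wrong‑genus contributions are negligible uniformly enough that the local Airy‑type estimates persist under the coefficientwise perturbations; the approximate‑singular order estimates of Section~\ref{sec:graphs} are precisely what make this rigorous.
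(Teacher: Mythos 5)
Your proposal reproduces the paper's argument essentially step for step: reduce to the critical composition scheme $T_g(x,D(x,y))$ using the face-width results of Section~\ref{sec:graphs} to show the face-width-deficient contributions are negligible, invoke the Airy machinery of~\cite{airy}, and then adapt Sections~6.3--6.4 of~\cite{GNR}, observing that the substituted networks are literally planar (so the first distribution is the planar one) and that the largest-block distribution is genus-independent by Theorem~\ref{thm:largestB} (so the second matches too), with the constant $\alpha_2$ coming out planar because the singularity function of $T_g$ does not depend on $g$. This is the same proof, written out in slightly more detail than the paper's compressed presentation.
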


We conclude with the chromatic number of a random graph of genus
$g$. 
According to Lemma~\ref{lem:Cg}, asymptotically almost surely a random graph
of genus $g$ has face-width greater than any fixed number $k(g)$.
Taking $k(g)=2^{14g+6}$ this implies that it is 
5-colorable by a result of Thomassen~\cite{Th93}.
Moreover, as mentioned just before Theorem~\ref{thm:components}, 
a random graph of genus $g$
has a linear number of copies of $K_4$ with high probability,
in particular has chromatic number at least $4$. Consequently:

\begin{theorem}
The chromatic number of a random graph of fixed genus $g$
with $n$ vertices 
is asymptotically almost surely in $\{4,5\}$.
\end{theorem}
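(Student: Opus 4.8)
The plan is to establish the two inequalities $\chi(G)\ge 4$ and $\chi(G)\le 5$ separately, each holding asymptotically almost surely (a.a.s.), and then combine them.

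For the lower bound I would use the \emph{appearances} machinery already invoked just after Theorem~\ref{thm:components}: for every fixed planar graph $H$, the number of appearances of $H$ in a random graph of genus $g$ is asymptotically normal with mean $\Theta(n)$, exactly as in the planar case~\cite{gimeneznoy,GNR}. Taking $H=K_4$ (which is planar), this shows that with high probability $G$ contains at least one copy of $K_4$ --- in fact a linear number of pairwise disjoint copies --- and hence $\chi(G)\ge\chi(K_4)=4$ a.a.s.

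For the upper bound I would first observe that it suffices to work with graphs of genus \emph{exactly} $g$: by Theorem~\ref{theo:main} the counts $a_n^{(g-1)}$ are of strictly smaller polynomial order than $a_n^{(g)}$, so a random graph embeddable in $\mathbb{S}_g$ has genus exactly $g$ a.a.s. As in the proof of Theorem~\ref{thm:components}, with high probability such a $G$ has a unique connected component $G_0$ of genus $g$, all remaining components being planar; since planar graphs are $5$-colorable, it is enough to $5$-color $G_0$. Set $k(g)=2^{14g+6}$. By Lemma~\ref{lem:Cg}, for each $k$ the series $C_g^{\fw=k}(x,y)$ is dominated coefficientwise by a series of strong approximate-singular order $\alpha+\tfrac12$, whereas by Lemma~\ref{lem:Gg} the series $G_g(x,y)$ has approximate-singular order $\alpha$; since $\alpha+\tfrac12>\alpha$, combining with the (positive-order) planar generating functions and applying the transfer statements of Section~\ref{sec:sing} yields that, for each fixed $k$, the probability that $G_0$ has face-width equal to $k$ tends to $0$. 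As $k(g)$ does not depend on $n$, a union bound over $k=1,\dots,k(g)$ shows that a.a.s.\ the face-width of $G_0$ in $\mathbb{S}_g$ exceeds $k(g)$. By Thomassen's theorem~\cite{Th93}, a graph embeddable in $\mathbb{S}_g$ with face-width more than $2^{14g+6}$ is $5$-colorable, hence $\chi(G_0)\le 5$, and therefore $\chi(G)\le 5$ a.a.s. Putting the two bounds together gives $\Pr[\,4\le\chi(G)\le 5\,]\to 1$, which is the assertion.

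I expect the only genuine care needed --- as opposed to a conceptual difficulty --- to lie in the upper bound: one must first invoke the component-structure results underlying Theorem~\ref{thm:components} to guarantee the existence and uniqueness of the genus-$g$ component $G_0$ with high probability, and then make sure that Lemma~\ref{lem:Cg} is being applied to connected graphs of each fixed face-width, so that the union bound over the finitely many values $k\le k(g)$ is legitimate and transfers to the whole graph $G$. The $K_4$ lower bound, and the passage from a gap in singular order to an $o(1)$ probability, are routine given the tools developed above.
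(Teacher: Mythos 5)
Your proof takes essentially the same approach as the paper's: the lower bound via the a.a.s.\ presence of $K_4$ (from the appearances machinery noted just before Theorem~\ref{thm:components}), and the upper bound by combining the face-width estimate of Lemma~\ref{lem:Cg} with Thomassen's theorem~\cite{Th93} for $k(g)=2^{14g+6}$. The extra care you take in reducing to the unique genus-$g$ connected component sensibly fills in a step the paper leaves implicit (and your initial reduction from ``embeddable in $\mathbb{S}_g$'' to ``genus exactly $g$'' is unnecessary, since the theorem's model is already graphs of genus exactly $g$, but it does no harm).
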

Unfortunately, we do not know if both values $4$ and $5$ appear on a positive proportion of graphs of genus~$g$. However, we conjecture the following:
\begin{conjecture}
The chromatic number of a random graph of fixed genus $g$
with $n$ vertices 
is asymptotically almost surely equal to $4$.
\end{conjecture}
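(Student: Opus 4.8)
\noindent\emph{Sketch of a possible approach (the statement is currently open, which is why it is phrased as a conjecture).}
For $g=0$ the conjecture is a theorem: a planar graph is $4$-colorable by the Four Color Theorem, and as recalled before Theorem~\ref{thm:components} a random graph of genus $g$ contains a copy of $K_4$ with high probability, so $\chi=4$ a.a.s.\ in the planar case. For $g>0$ the lower bound $\chi\ge 4$ a.a.s.\ still holds for the same reason, so the conjecture is equivalent to the statement that a random graph of genus $g$ is $4$-colorable a.a.s. The plan is to combine the structural results of Section~\ref{sec:graphs} with a coloring theorem for locally planar graphs on $\mathbb{S}_g$. First, by Theorem~\ref{theo:Rob} a random connected graph of genus $g$ has, a.a.s., a unique block of genus $g$ while all its other blocks are planar; since the chromatic number of a graph is the maximum of the chromatic numbers of its blocks and every planar block is $4$-colorable, it suffices to $4$-color the genus-$g$ block a.a.s. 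Descending once more, this block has a unique genus-$g$ $3$-connected component, the remaining $3$-connected components being planar, and one propagates a $4$-coloring through the tree-decomposition into $3$-connected components, polygons and bonds: polygons and bonds are handled trivially, and for each substituted planar network a $4$-coloring separating its two poles exists by the Four Color Theorem (adding the edge between the poles keeps the network planar), after which a permutation of colors realizes any prescribed distinct pair of colors on the poles. So everything reduces to $4$-coloring the genus-$g$ $3$-connected component a.a.s.

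By Lemma~\ref{lem:Cg} a random graph of genus $g$ has, a.a.s., face-width larger than any prescribed constant; hence a.a.s.\ the genus-$g$ $3$-connected component is $3$-connected with large face-width, so it has a unique embedding in $\mathbb{S}_g$, which via the quadrangulation correspondence of Section~\ref{sec:maps} is (a decoration of) an irreducible quadrangulation of $\mathbb{S}_g$ of large edge-width. Therefore the conjecture would follow immediately --- in the much stronger form that \emph{every} genus-$g$ graph of large face-width is $4$-colorable --- from the statement: for every $g$ there is $f(g)$ such that every graph embeddable in $\mathbb{S}_g$ with face-width at least $f(g)$ is $4$-colorable. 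This is the natural ``locally planar'' analogue of the Four Color Theorem, one notch below Thomassen's theorem~\cite{Th93} that such graphs are $5$-colorable, which is exactly what yields the $\{4,5\}$ conclusion of the preceding theorem.

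The hard part --- and the reason the statement remains only a conjecture --- is precisely this last step: no analogue of the Four Color Theorem is known for positive-genus surfaces with large face-width, and it is not even clear that such a statement is true, so it cannot simply be invoked. In the absence of a topological theorem of this kind, a genuinely probabilistic route would instead have to show that $5$-chromatic configurations, i.e.\ $5$-critical subgraphs, are asymptotically negligible in the random model; but the family of $5$-critical graphs embeddable in $\mathbb{S}_g$ is infinite and poorly understood, which puts this out of reach of the present methods. A plausible intermediate target, which by Lemma~\ref{lem:Cg} would already suffice, is to prove that every $5$-critical graph embeddable in $\mathbb{S}_g$ has face-width bounded by some constant depending only on $g$ --- the exact analogue, one level down, of the known finiteness results for $6$-critical graphs on surfaces. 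Settling that dichotomy appears to be the crux.
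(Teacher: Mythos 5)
This statement is presented in the paper as a \emph{conjecture}: the authors explicitly say they do not know whether the chromatic number is a.a.s.\ $4$, and they give no proof. So there is nothing in the paper to compare your argument against, and you are right not to claim a proof. Your preparatory reductions are sound and consistent with the paper's machinery: the lower bound $\chi\ge 4$ a.a.s.\ via linearly many copies of $K_4$ is exactly the paper's argument for the preceding theorem, and the descent through blocks and $3$-connected components (propagating a $4$-coloring through polygons, bonds and planar networks with prescribed pole colors) is a correct use of Theorem~\ref{theo:Rob} and the Tutte decomposition. Likewise, Lemma~\ref{lem:Cg} does give arbitrarily large face-width a.a.s., which is how the paper obtains $5$-colorability from Thomassen's theorem.

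One substantive correction, though: the ``strong form'' you propose as the missing ingredient --- that every graph embeddable in $\mathbb{S}_g$ with sufficiently large face-width is $4$-colorable --- is not merely unproven; it is known to be \emph{false}. Fisk's construction produces triangulations of the torus (and of other non-spherical surfaces) with arbitrarily large edge-width, hence arbitrarily large face-width, that are not $4$-colorable; this is precisely why Thomassen's $5$-color theorem for locally planar graphs is best possible. So the deterministic route is definitively closed, not just out of reach, and any proof of the conjecture must be genuinely probabilistic: one has to show that the random graph of genus $g$ a.a.s.\ avoids all configurations forcing $\chi\ge 5$, which is the open crux. Your proposed intermediate target (bounded face-width for $5$-critical graphs on $\mathbb{S}_g$) would also need additional care even if true, since a subgraph of a graph of large face-width need not itself have large face-width in its own minimal-genus embedding, so such a bound would not immediately combine with Lemma~\ref{lem:Cg}.
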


More precise results hold for the list-chromatic number: as shown in~\cite{DevosKawaMohar}, 
a graph of fixed genus is $5$-choosable provided its face-width is large enough. Moreover, there exist planar graphs that are not $4$-choosable~\cite{voigt}. If we fix any of them, then it is asymptotically almost surely contained in a random graph of genus $g$.
Therefore we have:
\begin{theorem}
The list-chromatic number of a random graph of fixed genus $g$
with $n$ vertices is asymptotically almost surely equal to $5$.
\end{theorem}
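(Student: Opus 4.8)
The plan is to prove the two matching bounds ``list-chromatic number $\le 5$'' and ``list-chromatic number $\ge 5$'', each holding asymptotically almost surely, so that asymptotically almost surely the list-chromatic number equals $5$.

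For the upper bound I would rely on the theorem of De\,Vos, Kawarabayashi and Mohar~\cite{DevosKawaMohar}: there is a constant $w_0=w_0(g)$ such that any graph which embeds in $\mathbb{S}_g$ with face-width at least $w_0$ is $5$-choosable. I would then invoke the face-width estimate already recorded just before Theorem~\ref{thm:components}: by Lemma~\ref{lem:Cg}, for each fixed $k$ the subfamily of graphs of genus $g$ of face-width $k$ is counted by a series of approximate-singular order $\alpha+1/2$, which is strictly larger than the order $\alpha$ of the full family, so this subfamily has relative density $O(n^{-1/2})$; summing over the finitely many $k\le w_0$ shows that asymptotically almost surely a random graph of genus $g$ has face-width larger than $w_0$, hence is $5$-choosable, i.e.\ has list-chromatic number at most $5$.

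For the lower bound I would fix once and for all a planar graph $H$ that is not $4$-choosable; such a graph exists by Voigt~\cite{voigt}. By the ``appearances'' argument already used in this section (and developed in~\cite{gimeneznoy,GNR}), a random graph of genus $g$ asymptotically almost surely contains every fixed planar graph as a subgraph---indeed a linear number of pairwise vertex-disjoint copies. Applying this to $H$ and using the monotonicity of the list-chromatic number under taking subgraphs, the list-chromatic number of a random graph of genus $g$ is asymptotically almost surely at least that of $H$, hence at least $5$. Combining with the upper bound gives the statement.

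I do not expect a real obstacle here. The two points that deserve a line of verification are that the threshold $w_0(g)$ from~\cite{DevosKawaMohar} depends only on the surface (so that Lemma~\ref{lem:Cg} may be applied with a fixed value of $k$), and that the appearances argument transfers verbatim to the fixed graph $H$; both are immediate given the machinery already set up in the paper.
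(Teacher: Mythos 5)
Your proposal is correct and follows exactly the same route as the paper: the upper bound via the De~Vos--Kawarabayashi--Mohar $5$-choosability theorem combined with the face-width estimate from Lemma~\ref{lem:Cg}, and the lower bound via Voigt's non-$4$-choosable planar graphs together with the appearances argument. The paper's proof is essentially the terse version of what you wrote.
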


\vspace{.2cm}

\noindent\textbf{Remark.}
All these result also hold for the random graph of fixed non-orientable genus $h$,
with exactly the same ingredients.
Since a random graph with $n$ vertices embeddable on $\mathbb{S}_g$ is asymptotically almost surely of genus $g$, and a random graph with $n$ vertices  embeddable on $\mathbb{N}_h$ is asymptotically
almost surely of non-orientable genus $h$, these
 results hold also for the random graph embeddable on $\mathbb{S}_g$
 and for the random graph embeddable on $\mathbb{N}_h$.

\bibliographystyle{plain}
\bibliography{mabiblio}

\end{document}